\definecolor{darkred}{RGB}{139,0,0}
\definecolor{darkgreen}{RGB}{0,100,0}
\definecolor{darkmagenta}{RGB}{139,0,139}
\definecolor{darkpurple}{RGB}{110,0,180}
\definecolor{darkblue}{RGB}{40,0,200}
\definecolor{darkorange}{RGB}{255,140,0}
\newcommand{\fa}{{\mathfrak a}}
\def\R{\mathbb{R}}
\def\N{\mathbb{N}}
\def\IC{\mathbb{C}}
\def\IN{\mathbb{N}}
\def\IR{\mathbb{R}}
\def\IE{\mathbb{E}}
\newcommand{\C}{\mathbb{C}}
\newcommand{\Oc}{\mathscr{O}}
\newcommand{\cB}{\mathcal{B}}
\newcommand{\calD}{\mathcal{D}}
\newcommand{\cE}{\mathcal{E}}
\newcommand{\cL}{\mathcal{L}}
\newcommand{\cN}{\mathcal{N}}
\newcommand{\cR}{\mathcal{R}}
\newcommand{\cT}{\mathcal{T}}
\newcommand{\cG}{\mathcal{G}}
\newcommand{\cO}{\mathcal{O}}
\newcommand{\cX}{\mathcal{X}}
\newcommand{\bcX}{\mathcal{X}}
\newcommand{\bcY}{\mathcal{Y}}
\newcommand{\cY}{\mathcal{Y}}
\newcommand{\bpi}{{\boldsymbol \pi}}
\newcommand{\eps}{\varepsilon}
\newcommand{\bsnull}{{\boldsymbol 0}}
\newcommand{\bsalpha}{{\boldsymbol{\alpha}}}
\newcommand{\bsbeta}{\boldsymbol{\beta}}
\newcommand{\bsgamma}{{\boldsymbol{\gamma}}}
\newcommand{\bsrho}{{\boldsymbol{\rho}}}
\newcommand{\bstau}{\boldsymbol{\tau}}
\newcommand{\bsnu}{{\boldsymbol{\nu}}}
\newcommand{\bsb}{{\boldsymbol{b}}}
\newcommand{\bsk}{{\boldsymbol{k}}}
\newcommand{\bsx}{{\boldsymbol{x}}}
\newcommand{\bsy}{{\boldsymbol{y}}}
\newcommand{\bsz}{{\boldsymbol{z}}}
\newcommand{\rd}{\mathrm{d}}
\newcommand{\dd}{\mathrm{d}}
\newcommand{\bbQ}{\mathbb{Q}}
\newcommand{\bbR}{\mathbb{R}}
\newcommand{\bbC}{\mathbb{C}}
\newcommand{\bbJ}{\mathbb{J}}
\newcommand{\bbZ}{\mathbb{Z}}
\newcommand{\bbN}{\mathbb{N}}
\newcommand{\bbE}{\mathbb{E}}
\newcommand{\calM}{\mathcal{M}}
\newcommand{\calO}{\mathcal{O}}
\newcommand{\calU}{\mathcal{U}}
\newcommand{\calW}{\mathcal{W}}
\newcommand{\setu}{\mathrm{\mathfrak{u}}}
\newcommand{\setv}{\mathrm{\mathfrak{v}}}
\newcommand{\KL}{Karhunen-Lo\`eve }
\newcommand{\setD}{{\mathfrak{D}}}
\newcommand{\mask}[1]{{}}
\newcommand{\bszero}{{\boldsymbol{0}}}
\numberwithin{equation}{section}
\theoremstyle{plain}
\newtheorem{theorem}{Theorem}[section]
\newtheorem{corollary}{Corollary}[section]
\newtheorem{proposition}{Proposition}[section]
\newtheorem{assumption}{Assumption}
\theoremstyle{definition}
\newtheorem{definition}{Definition}[section]
\newtheorem{remark}{Remark}[section]
\newcommand{\bsone}{{\boldsymbol{1}}}
\newcommand{\calS}{\mathcal S}
\newcommand{\be}{\begin{equation}}
\newcommand{\ee}{\end{equation}}
\newcommand{\ba}{\begin{array}}
\newcommand{\ea}{\end{array}}
\newcommand{\beas}{\begin{eqnarray*}}
\newcommand{\eeas}{\end{eqnarray*}}
\newcommand{\bea}{\begin{eqnarray}}
\newcommand{\eea}{\end{eqnarray}}
\title{Higher order Quasi-Monte Carlo integration for 
       \\
       Bayesian Estimation}
\author{
Josef Dick, 
Robert N. Gantner,
Quoc T. Le Gia,
and 
Christoph Schwab
}
\date{\today}
\begin{document}
\maketitle
\begin{abstract}
We analyze combined Quasi-Monte Carlo quadrature and Finite Element approximations
in Bayesian estimation of solutions to countably-parametric operator equations
with holomorphic dependence on the parameters as considered in
[Cl.~Schillings and Ch.~Schwab: 
 Sparsity in Bayesian Inversion of Parametric Operator Equations.
 Inverse Problems, {\bf 30}, (2014)].
Such problems arise in numerical uncertainty quantification
and in Bayesian inversion of operator equations
with distributed uncertain inputs, such as uncertain coefficients, 
uncertain domains or uncertain source terms and boundary data.
We show that the parametric Bayesian posterior densities
belong to a class of weighted Bochner 
spaces of functions of countably many variables,
with a particular structure of the QMC quadrature 
weights:
up to a (problem-dependent, and possibly large) 
finite dimension $S$
product weights can be used, and beyond this dimension,
weighted spaces with so-called SPOD weights, recently
introduced in 
[J.~Dick, F.Y.~Kuo, Q.T.~Le Gia, D.~Nuyens and Ch.~Schwab, Christoph Higher order QMC Petrov-Galerkin discretization for affine parametric operator equations with random field inputs. SIAM J. Numer. Anal. 52 (2014),  2676--2702.], 
are used to describe the solution regularity.
We establish error bounds for higher order Quasi-Monte Carlo quadrature
for the Bayesian estimation based on
[J.~Dick, Q.T.~LeGia and Ch.~Schwab,
 Higher order Quasi-Monte Carlo integration for holomorphic, parametric
 operator equations, Report 2014-23, SAM, ETH Z\"urich].
It implies, in particular, 
regularity of the parametric solution and of
the countably-parametric Bayesian posterior density 
in SPOD weighted spaces.
This, in turn, implies that the
Quasi-Monte Carlo quadrature methods in 
[J. Dick, F.Y.~Kuo, Q.T.~Le Gia, D.~Nuyens, Ch.~Schwab, 
Higher order QMC Galerkin discretization for 
parametric operator equations, SINUM (2014)]
are applicable to these problem classes,
with dimension-independent convergence rates $\calO(N^{-1/p})$ of 
$N$-point HoQMC approximated Bayesian estimates, where $0<p<1$
depends only on the sparsity class of the uncertain input in the Bayesian estimation.
Fast component-by-component (CBC for short) construction 
[R. N. Gantner and Ch. Schwab Computational Higher Order Quasi-Monte Carlo Integration, 
 Report 2014-25, SAM, ETH Z\"urich]
allow efficient deterministic Bayesian estimation 
with up to $10^4$ parameters.
\end{abstract}

Key words: 
Quasi-Monte Carlo, Lattice rules, digital nets, 
parametric operator equations, infinite-dimensional quadrature,
Bayesian inverse problems, Uncertainty Quantification,
CBC construction, SPOD weights.
\newpage
\setcounter{page}{0}
\tableofcontents
\newpage
\section{Introduction}
\label{sec:Intro}
The statistical estimation of solutions of operator equations 
which depend on \emph{uncertain inputs}, subject to given noisy data,
is a key task in computational uncertainty quantification. 
In the present paper we consider the particular case when 
the uncertain input is {\em distributed}.
Specifically, we allow
the distributed, uncertain input $u$ to take values in an
infinite-dimensional, separable Banach space $X$.
The {\em forward responses} resulting from 
(instances of) uncertain data $u\in X$ then
take values in a second Banach space, 
the \emph{state space} $\bcX$.
In Bayesian estimation, one is interested in computing
the expected value of a Quantity of Interest (QoI for short)
taking values in $\IR$.
The mathematical expectation (or \emph{ensemble average}) is
conditional on the given, noisy observation data $\delta \in Y$.

The efficient computation of such QoI's in either
forward or inverse problems involves two basic steps: 
i) approximate (numerical) solution of the operator equation 
   in the forward problem, and 
ii) approximate evaluation of the mathematical expectation 
    w.r.t. the posterior over all possible realizations of the
    uncertain input, conditional on given data, 
    by some form of numerical integration. 

Due to the high (infinite) dimensionality of the integration domain,
Monte-Carlo methods have been widely used.
In the present paper, building on our previous work \cite{DKGNS13} on
high-dimensional Quasi-Monte Carlo integration and on 
numerical Bayesian estimation \cite{SS12, SS13} 
we propose a novel deterministic
computational approach towards these aims. 
It consists in
i) \emph{uncertainty parametrization}: through an unconditional
   basis $\{ \psi_j \}_{j\geq 1}$ of $X$, the forward problem
   is transformed formally to an infinite-dimensional, 
   parametric deterministic problem.
ii) \emph{dimension-truncation}: the uncertain input $u$ is restricted
    to a finite number $s$ of parameters.
iii) \emph{(Petrov-)Galerkin discretization} 
    of the parametric operator equation and, finally,
iv) \emph{Quasi-Monte Carlo} (QMC) integration in $s$ parameters 
    from step ii) to compute approximate Bayesian estimates for the
    quantity of interest (QoI).

The present paper is motivated in part by \cite{KSS12}, where QMC
integration using a family of \emph{randomly shifted lattice rules} 
was combined with Petrov-Galerkin 
Finite Element discretization for a model parametric
diffusion equation, and in part by \cite{ScMCQMC12}, 
where the methodology of \cite{KSS12} 
was extended to forward problems described by an abstract family of 
{\em affine-parametric, linear} operator equations. 

The treatment of inverse problems 
is based on the infinite-dimensional Bayesian framework 
as developed in \cite{Stuart10,Stuart13}.
In this work, in contrast to \cite{KSS12,ScMCQMC12}, we 
analyze \emph{deterministic, interlaced polynomial lattice rules} 
for the numerical evaluation of Bayesian estimates.
As we show here, these higher order QMC quadratures 
can provide dimension-independent convergence rates
beyond order one for smooth integrands (cf. \cite{D08,D09}); 
convergence order $1$ was the limitation in \cite{KSS12,ScMCQMC12}
and order $1/2$ 
is an intrinsic limitation of Monte-Carlo based methods 
(here, convergence order is meant in terms of the number $N$ 
of ``samples'', i.e.~of forward solves).
We prove that sparsity of uncertainty parametrization
implies higher order, dimension-independent convergence rates for 
QMC evaluation of ratio estimators for
expectations of QoI's under the Bayesian posterior,
for a broad class of 
smooth, nonlinear, and possibly nonaffine-parametric operator 
equations with distributed uncertain input data.
Our results imply that unlike MCMC and filtering methods,
the presently proposed QMC evaluation of ratio estimators
can provide convergence rates larger than $1/2$ regardless 
of the dimension of the parameter space, while also allowing 
for ``embarrassing parallelism''.

The structure of this paper is as follows:
In Section \ref{sec:HolOpEq}, we introduce a class of 
smooth, nonlinear, holomorphic-parametric operator equations
admitted in our approach.
We present sufficient conditions on the nonlinear operators 
and on the uncertainty for the forward problems to be well-posed, 
as in \cite{GR90,PR}. 
We require these conditions uniformly in the set $\tilde{X}$ 
of admissible uncertainties.
We give a parametrization of the uncertain inputs which reduces
the forward problem with 
distributed uncertain input to a countably-parametric, 
deterministic problem.
These parameters, denoted by $y_j$, are assumed scaled so as 
to take values in $[-1,1]$.

We review several approximations of these equations 
which are required in their computational Bayesian inversion, 
in particular, (Petrov-)Galerkin discretizations
of the parametric forward equations with
discretization error estimates from \cite{GR90,PR}.

In Section \ref{sec:BayInv},
we review Bayesian inversion for these operator equations,
based on \cite{Stuart10,Stuart13} and on \cite{ScSt11,SS12,SS13}.
The countably-parametric representation of the uncertain inputs
allows us to write the integrals arising in Bayesian estimation 
as countably iterated parametric integrals.
The principal result of the present paper, 
a convergence rate bound of the Quasi Monte-Carlo integration
for these integrals, requires precise derivative bounds for the
integrand functions. These are proved based on analytic 
continuation of the integrand functions into the complex domain.
To this end, in Section \ref{sec:Hol}, we review the notion
of {\em holomorphy} of countably parametric integrand 
functions in both forward and inverse problems. 
Section~\ref{sec:anadepsol} gives the holomorphy and 
resulting derivative bounds on parametric forward solutions,
whereas Section \ref{sec:pdTh} contains the corresponding
holomorphy results for the Bayesian posterior densities. 
Section \ref{sec:QMC} reviews recent results on
convergence theory for higher-order QMC quadratures 
(based on \cite{DiPi10}) and for the 
countably-parametric integrands which arise in 
Bayesian estimation (based on \cite{DKGNS13}).
Section \ref{sec:CombErrBd} presents the 
combined error bound
for the QMC-PG approximation of the Bayesian estimate.

Based on the results in Sections \ref{sec:anadepsol} 
and \ref{sec:pdTh}, in \cite{DKGNS13,GaCS14}
variants of the fast 
CBC constructions of generating vectors are developed
based on \cite{DKGNS13,DiGo12,Go13} which are tailored to
the `hybrid' nature of the QMC weights. 

Elliptic model problems illustrating the general theory of 
holomorphic forward maps are presented in Section \ref{sec:modelproblems}.
Numerical results for Bayesian estimation for these problems
are presented in Section \ref{sec:results}.
%
\section{Forward UQ for Parametric Operator Equations}
\label{sec:HolOpEq}

We introduce a class of smooth, nonlinear operator equations
with distributed uncertain input data $u$ taking values
in a separable Banach space $X$.
Upon appropriate uncertainty parametrization, these equations
become countably-parametric operator equations.

\subsection{Operator equations with uncertain input}
\label{sec:OpEqUncInp}
Let $X, \bcX$ and $\bcY$ be real, separable Banach spaces.
For a distributed, uncertain parameter $u\in X$,
assume a {\em nominal parameter instance}
$\langle u \rangle \in X$
(such as, for example, the expectation of an $X$-valued random field $u$),
is known.

Let $B_X(\langle u \rangle;R)$ be an open ball of radius $R>0$ in $X$ 
centered at a nominal input $\langle u \rangle\in X$. 
We consider the following problem: 
\be\label{eq:NonOpEqn}
\mbox{given}\;\;u\in B_X(\langle u \rangle;R),\;
\mbox{find} \; q\in \bcX \quad \mbox{s.t.} \quad 
_{\bcY'}\langle \cR(u;q) , v \rangle_\bcY = 0  \quad \forall v\in \bcY
\;,
\ee
where $\cR: X \times \bcX \rightarrow \bcY'$ is 
the residual of a \emph{forward operator},
depending on $u$ and acting on $q\in \bcX$.

Given $u\in B_X(\langle u \rangle;R)$,
a solution $q_0$ of \eqref{eq:NonOpEqn} is called {\em regular at $u$} 
if and only if the map $\cR(u;\cdot)$ is 
Fr\'{e}chet differentiable with respect to $q$ at $q_0$
and if the differential is an isomorphism:
$(D_q\cR)(u;q_0)\in \cL_{iso}(\bcX;\bcY')$.
Here and in what follows,
$\cL_{iso}$ shall denote the set of linear isomorphisms
between the Banach space arguments.

We assume the map $\cR(u;\cdot):\bcX\to \bcY'$ 
admits a family of regular solutions 
{\em locally, in an open neighborhood of the nominal parameter instance
      $\langle u \rangle \in X$} 
so that the operator equations involving $\cR(u;q)$ are well-posed. 
For all $u$ in a sufficiently small, closed neighborhood
$\tilde{X}\subseteq X$ of  $\langle u \rangle\in X$ 
we impose the following structural assumption
on the parametric forward problem:
\begin{assumption}\label{asmp:LocSolv}
For every $u\in \tilde{X} \subseteq X$,
we assume given
maps $A(u;q): X\times \bcX \to \bcY'$ and $F(u): X \to \bcY'$ 
such that 
\begin{equation}\label{eq:main}
\cR(u;q) = A(u;q) - F(u) \quad \mbox{in}\quad \bcY'
\;.
\end{equation}
We denote by 
$\fa: \bcX \times \bcY \to \IR$ the bilinear form associated with $A$.
For every fixed $u\in \tilde{X}\subset X$,
and for every $F(u)\in \bcY'$, the problem
to find $q(u)\in \bcX$ such that the residual equation
\eqref{eq:NonOpEqn} is well-posed,
i.e.~there exists a unique solution $q(u)$ 
of \eqref{eq:NonOpEqn} which depends continuously on $u$.
\end{assumption}
%
The set 
$\{(u,q(u)): u\in \tilde{X} \} \subset X\times \bcX$ is called a
{\em regular branch of solutions} of \eqref{eq:NonOpEqn} if
\begin{equation}\label{eq:RegBranch}
\begin{array}{l}
\tilde{X}\ni u \mapsto q(u) \;\mbox{is continuous as mapping from}
\; \tilde{X} \to \bcX\;,
\\
\cR(u;q(u)) = 0 \quad \mbox{in}\quad \bcY'
\;.
\end{array}
\end{equation}
%
The regular branch of solutions \eqref{eq:RegBranch} 
is called 
{\em nonsingular} if, in addition, the differential
\begin{equation}\label{eq:NonSingBranch}
(D_q\cR)(u;q(u))\in \cL(\bcX,\bcY') \;
\mbox{is an isomorphism from $\bcX$ onto $\bcY'$,
     for all $u\in \tilde{X}$}
\;.
\end{equation}
%
Well-known sufficient conditions 
for well-posedness of \eqref{eq:NonOpEqn} are stated in the following proposition.
More precisely, for regular branches of nonsingular solutions 
given by \eqref{eq:NonOpEqn} - \eqref{eq:NonSingBranch},
the differential $D_q\cR$ satisfies the so-called 
{\em inf-sup conditions}.
\begin{proposition}\label{prop:WellposInfSup}
Assume that $\bcY$ is reflexive and that,
for some nominal value $\langle u \rangle\in X$ 
of the uncertainty, the operator equation 
\eqref{eq:NonOpEqn} with \eqref{eq:main} 
admits a {\em regular branch of solutions} \eqref{eq:RegBranch}
with $q_0\in \bcX$ denoting the ``nominal'' solution corresponding
to the data $\langle u \rangle \in X$.
Then the differential $D_q\cR$ at $(\langle u \rangle,q_0)$
given by the bilinear map 
$$\bcX \times \bcY \ni (\varphi,\psi) 
 \mapsto \,_{\bcY'}\langle  
 D_q\cR(\langle u\rangle;q_0)\varphi,\psi\rangle_\bcY
$$
is boundedly invertible, 
{\em uniformly with respect to $u\in \tilde{X}$}
where $\tilde{X} \subset X$ is an open neighborhood
of the nominal instance $\langle u \rangle \in X$ 
of the uncertain parameter. 
In particular, there exists a constant $\kappa > 0$
such that there holds
\be\label{eq:DqAinfsup}
\forall u\in \tilde{X}: \quad 
\begin{array}{c} 
    \displaystyle
    \inf_{0\ne\varphi\in \bcX} \sup_{0\ne\psi   \in \bcY}
    \frac{
     _{\bcY'}\langle  (D_q\cR)(u;q_0)\varphi,\psi\rangle_\bcY
    }{
    \| \varphi \|_\bcX \|\psi \|_{\bcY}
    }
    \geq \kappa > 0 \;,
    \\
    \displaystyle
    \inf_{0\ne\psi\in \bcY} \sup_{0\ne\varphi \in \bcX}
    \frac{
     _{\bcY'}\langle  (D_q\cR)(u;q_0)\varphi,\psi\rangle_\bcY
    }{
    \| \varphi \|_\bcX \|\psi \|_{\bcY}
    }
    \geq \kappa > 0 \;,
\end{array}
\ee
and
\be\label{eq:DqAsupsup}
\forall u\in \tilde{X}:
\quad 
\| (D_q\cR)(u;q_0) \|_{\cL(\bcX,\bcY')} 
=
\sup_{0\ne\varphi\in \bcX} 
\sup_{0\ne\psi   \in \bcY}
\frac{
_{\bcY'}\langle (D_q\cR)(u;q_0)\varphi,\psi\rangle_\bcY
}{
\| \varphi \|_\bcX \|\psi \|_{\bcY}
}
\leq \kappa^{-1}
\;.
\ee
\end{proposition}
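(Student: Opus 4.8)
The plan is to obtain the two inf-sup conditions \eqref{eq:DqAinfsup} and the norm bound \eqref{eq:DqAsupsup} from the bounded invertibility of the differential at the nominal instance, and then to propagate this invertibility to a full neighborhood $\tilde X$ of $\langle u\rangle$ by a perturbation argument. The central tool is the Banach--Ne\v{c}as--Babu\v{s}ka (BNB) theorem (the generalized Lax--Milgram lemma): for $T\in\cL(\bcX,\bcY')$ with $\bcY$ reflexive, $T$ is a bounded isomorphism of $\bcX$ onto $\bcY'$ if and only if the bilinear form $(\varphi,\psi)\mapsto{}_{\bcY'}\langle T\varphi,\psi\rangle_\bcY$ satisfies both inf-sup conditions of \eqref{eq:DqAinfsup}; moreover, in that case the common value of the two inf-sups equals $\|T^{-1}\|_{\cL(\bcY',\bcX)}^{-1}$. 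Here reflexivity of $\bcY$ is precisely what converts the second (transposed) inf-sup into surjectivity of $T$ onto $\bcY'$.

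Writing $T_u:=(D_q\cR)(u;q_0)\in\cL(\bcX,\bcY')$, I would first treat the nominal instance $u=\langle u\rangle$. The branch of solutions is assumed regular, so its nominal member $q_0=q(\langle u\rangle)$ is a regular solution; by the definition of regularity this means $T_{\langle u\rangle}=(D_q\cR)(\langle u\rangle;q_0)\in\cL_{iso}(\bcX,\bcY')$. Applying BNB to $T_{\langle u\rangle}$ yields both inf-sup conditions at $\langle u\rangle$ with constant $\kappa_0:=\|T_{\langle u\rangle}^{-1}\|^{-1}>0$.

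For the uniformity I would use that the map $u\mapsto T_u$ is continuous from $X$ into $\cL(\bcX,\bcY')$, which follows from the smooth dependence of the residual $\cR$ on the uncertain input $u$ (with $q_0$ held fixed). Choose $r>0$ small enough that $\|T_u-T_{\langle u\rangle}\|_{\cL(\bcX,\bcY')}\le\tfrac12\kappa_0$ whenever $\|u-\langle u\rangle\|_X<r$, and take $\tilde X$ to be this ball (intersected with the admissible set). By the Banach perturbation lemma (Neumann series), the set of isomorphisms is open and inversion is locally Lipschitz, so each $T_u$ is again an isomorphism and
\[
  \|T_u^{-1}\|_{\cL(\bcY',\bcX)}\;\le\;\frac{\|T_{\langle u\rangle}^{-1}\|}{1-\|T_{\langle u\rangle}^{-1}\|\,\|T_u-T_{\langle u\rangle}\|}\;\le\;\frac{2}{\kappa_0}\qquad(u\in\tilde X).
\]
Invoking BNB once more, now for each $T_u$ with $u\in\tilde X$, gives both inf-sup conditions of \eqref{eq:DqAinfsup} with the \emph{uniform} constant $\|T_u^{-1}\|^{-1}\ge\tfrac12\kappa_0$.

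It remains to combine this with the norm bound. On $\tilde X$ one has $\|T_u\|_{\cL(\bcX,\bcY')}\le\|T_{\langle u\rangle}\|+\tfrac12\kappa_0=:M$, which is exactly the supremum appearing in \eqref{eq:DqAsupsup}. Setting $\kappa:=\min\{\tfrac12\kappa_0,\,M^{-1}\}$ then makes \eqref{eq:DqAinfsup} and \eqref{eq:DqAsupsup} hold simultaneously with this single constant. The main obstacle is the uniformity in $u$: one must bound the inf-sup constant from below along the entire neighborhood, and the decisive quantitative fact that makes this work is the identity (inf-sup constant)$\,=\|T_u^{-1}\|^{-1}$, whose stability under perturbation is delivered by the Neumann-series estimate above; continuity of $u\mapsto T_u$ is then what allows $\tilde X$ to be shrunk so as to absorb the perturbation $\|T_u-T_{\langle u\rangle}\|$.
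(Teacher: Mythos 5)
Your proof is correct, and it is worth noting that the paper itself offers \emph{no} proof of this proposition: it is introduced with the phrase ``well-known sufficient conditions \dots are stated in the following proposition'' and implicitly deferred to the cited references on Petrov--Galerkin theory for nonlinear problems (Girault--Raviart, Pousin--Rappaz). Your argument --- Banach--Ne\v{c}as--Babu\v{s}ka at the nominal point, the identity between the inf-sup constant and $\|T_u^{-1}\|^{-1}$, and a Neumann-series perturbation to get uniformity over a sufficiently small neighborhood $\tilde{X}$ --- is precisely the standard route those references take, and your final bookkeeping with $\kappa:=\min\{\tfrac12\kappa_0,\,M^{-1}\}$ correctly delivers both \eqref{eq:DqAinfsup} and \eqref{eq:DqAsupsup} with a single constant. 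Two hypotheses you invoke are not literally contained in the proposition's statement and deserve to be flagged: first, the definition \eqref{eq:RegBranch} of a ``regular branch'' only asserts continuity of $u\mapsto q(u)$ and vanishing of the residual, so you must read the hypothesis as saying that $q_0$ is a \emph{regular} solution in the sense of the paper's earlier definition, i.e.\ $(D_q\cR)(\langle u\rangle;q_0)\in\cL_{iso}(\bcX,\bcY')$ --- this is clearly the intended meaning, but the paper's terminology is loose; second, the continuity of $u\mapsto (D_q\cR)(u;q_0)$ in $\cL(\bcX,\bcY')$, which drives the perturbation step, is only supplied by the paper \emph{after} the proposition (``uniformly continuously differentiable mapping $\cR(u;q)$ in a product of neighborhoods''). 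With those two readings made explicit, your proof is complete and self-contained.
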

For every $u\in \tilde{X} \subseteq X$, 
under conditions \eqref{eq:DqAinfsup} and \eqref{eq:DqAsupsup}, 
there exists a unique, regular solution $q(u)$ of \eqref{eq:main}
which is uniformly bounded with respect to $u\in \tilde{X}$
in the sense that there exists
a constant $C(F,\tilde{X}) > 0$ such that 
\begin{equation}\label{eq:LocBdd}
\sup_{u\in \tilde{X}} \| q(u) \|_{\bcX} \leq C(F,\tilde{X})
\;.
\end{equation}
%
The set 
$\{ (u,q(u)): u\in \tilde{X}\}\subset \tilde{X}\times \bcX$
is a {\em regular branch of nonsingular solutions}
when \eqref{eq:DqAinfsup} - \eqref{eq:LocBdd} hold.

If 
the nonlinear functional $\cR$ is Fr\'{e}chet differentiable
with respect to $u$ and Fr\'{e}chet differentiable with respect to $q$
at every point of the regular branch
    $\{ (u,q(u)): u\in \tilde{X}\}\subset \tilde{X}\times \bcX$,
then the mapping relating $u$ to $q(u)$ with the branch
of nonsingular solutions is locally Lipschitz on ${\tilde{X}}$: 
i.e.~there exists a Lipschitz constant $L(F,\tilde{X})$ 
such that
\begin{equation}\label{eq:LocLip}
\forall u,v\in \tilde{X}:
\quad 
\| q(u) - q(v) \|_{\bcX} \leq L(F,\tilde{X}) \| u-v \|_X 
\;.
\end{equation}
This follows from the identity 
$(D_u q)(u) = - (D_q\cR)^{-1} ( D_u\cR)$,
and from the isomorphism property 
$(D_u \cR_q)(\langle u \rangle;q_0) \in \cL_{iso}(\bcX,\bcY')$
which is implied by \eqref{eq:DqAinfsup} and \eqref{eq:DqAsupsup}, 
and from the 
continuity of the differential $D_q\cR$ on the regular branch.
 
In what follows, we will place ourselves in the abstract
setting \eqref{eq:main} with uniformly continuously differentiable
mapping $\cR(u;q)$ in a product of neighborhoods
$B_X(\langle u \rangle;R)\times B_\bcX(q(\langle u \rangle);R)$
of sufficiently small radius $R>0$.
The quantity $q(\langle u \rangle) \in \bcX$ is the corresponding 
regular solution of \eqref{eq:main} at 
the nominal input $\langle u \rangle\in X$.
\subsection{Uncertainty parametrization}
\label{sec:Param}
We shall be concerned with the particular
case where $u\in X$ is a random
variable taking values in a subset $\tilde{X}$ of the Banach space $X$.
We assume that $X$ is separable, infinite-dimensional, 
and admits an unconditional Schauder basis $\{\psi_j\}_{j\geq 1}$:
$X = {\rm span}\{\psi_j: j\geq 1\}$.
Then, every $u\in \tilde{X} \subset X$ can be parametrized 
in this basis, i.e.
\begin{equation}\label{eq:uviapsi}
u = u(\bsy) 
:= \langle u \rangle + \sum_{j\geq 1} y_j \psi_j 
\quad \mbox{for some } \bsy = (y_j)_{j \geq 1} \in U
\;.
\end{equation}
%
%
Examples of representations \eqref{eq:uviapsi} are \KL expansions 
(see, e.g., \cite{ST06,SchwabGittelsonActNum11,Stuart10,Stuart13})
or unconditional Schauder bases (see, e.g., \cite{CiDo72}).
Note that the representation \eqref{eq:uviapsi} is not unique: 
rescaling $y_j$ and $\psi_j$ will not change
$u$. 
We will assume, therefore, throughout what follows
that {\em the sequence $\{ \psi_j \}_{j\geq 1}$ is
such that $U=[-1,1]^\N$}. 
For any $\bsy\in U$, norm-convergence in $X$ 
of the series \eqref{eq:uviapsi} in $X$ 
is implied by the {\em summability condition}
\begin{equation}\label{eq:psumpsi0}
    \sum_{j\geq 1} \| \psi_j \|_X <\infty \;.
\end{equation}
Condition \eqref{eq:psumpsi0} will be assumed throughout 
in what follows.
 
To obtain convergence rate estimates for the discretization
of the forward problem, we shall restrict uncertain inputs
$u$ to sets $X_t\subset X$ of inputs $u$ with 
``higher regularity'' 
(measured in a smoothness scale $\{ X_t \}_{t\geq 0}$ 
with $X=X_0 \supset X_1 \supset X_2 \supset ...$),
so that $u\in X_t$ will imply in Assumption \ref{asmp:LocSolv}
that $F(\cdot)\in \bcY'_t$ and $q(u)\in \bcX_t$, with corresponding 
subspaces $\bcX_t\subset \bcX$ and $\bcY'_t\subset \bcY'$
with extra regularity from suitable scales.

We remark that $u\in X_t$ in general corresponds to stronger decay of 
the $\psi_j$ in \eqref{eq:uviapsi} which is relevant for
optimal convergence estimates for multi-level QMC discretizations.
In the present paper, we consider only single-level algorithms.
For $u\in X$, in \eqref{eq:uviapsi} 
the $\{\psi_j\}_{j\geq 1}$ are thus assumed scaled such that 
\eqref{eq:psumpsi0} is strengthened to
%
\begin{equation}\label{eq:psumpsi}
\bsb := \{ \| \psi_j \|_{X} \}_{j\geq 1} \in \ell^{p}(\N) \;\;\mbox{for some} \;\;0 < p < 1
\;.
\end{equation}
%
We also introduce the 
subset\footnote{For QMC quadrature, ahead, we rescale this set to $[-1/2,1/2]^\bbN$}
\begin{equation}\label{eq:DefSetU}
U 
=
\{ \bsy \in [-1,1]^\bbN: 
u(\bsy) := \langle u \rangle + \sum_{j\geq 1} y_j \psi_j \in \tilde{X} 
\}
\;.
\end{equation}
Once an unconditional Schauder basis $\{\psi_j\}_{j\geq 1}$ has been chosen, 
every realization $u\in X$ can be identified in 
a one-to-one fashion with the pair $(\langle u \rangle,\bsy)$
where $\langle u \rangle$ denotes the 
{\em nominal instance} of the uncertain datum $u$ and 
$\bsy$ is the coordinate vector
of the unique representation \eqref{eq:uviapsi}.
\begin{remark}\label{remk:U=X}
In what follows, by a slight abuse of notation,
{\em 
we identify the subset $U$ in \eqref{eq:DefSetU}
with the countable set of parameters from the 
infinite-dimensional parameter domain 
$U \subseteq \R^\N$
}
without explicitly writing so.
The operator $A(u;q)$ in \eqref{eq:main} then becomes,
via the parametric dependence $u=u(\bsy)$, 
a parametric operator family $A(u(\bsy);q)$ 
which we denote (with slight abuse of notation)
by $\{ A(\bsy;q) : \bsy \in U\}$, with
the parameter set $U=[-1,1]^\bbN$ (again, we use in what
follows this definition 
in place of the set $U$ as defined in \eqref{eq:DefSetU}).
In the particular case that the parametric 
operator family is linear, we have
$A(\bsy;q) = A(\bsy)q$ with $A(\bsy)\in \cL_{iso}(\cX,\cY')$.
We do not assume, however,
that the maps $q\mapsto A(\bsy;q)$
are linear in what follows, unless explicitly stated.
\end{remark}

With this understanding, and under the assumptions
\eqref{eq:LocBdd} and \eqref{eq:LocLip}, 
the operator equation
\eqref{eq:main} will admit, for every $\bsy\in U$, 
a unique solution $q(\bsy;F)$, which is, due to
\eqref{eq:LocBdd} and \eqref{eq:LocLip}, 
uniformly bounded and depends Lipschitz continuously
on the parameter sequence $\bsy\in U$: 
there holds
\begin{equation}\label{eq:ParDepy}
\sup_{\bsy\in U} \| q(\bsy;F) \|_{\cX} \leq C(F,U).
\end{equation}
If the local Lipschitz condition \eqref{eq:LocLip} holds, 
there exists a Lipschitz constant $L>0$ 
such that 
\begin{equation}\label{eq:ParLip}
\| q(\bsy;F) - q(\bsy';F) \|_\cX 
\leq L(F,U) \| u(\bsy) - u(\bsy') \|_X
\;.
\end{equation}
The Lipschitz constant $L>0$ in \eqref{eq:ParLip} 
is not, in general, equal to 
$L(F,\tilde{X})$ in \eqref{eq:LocLip}:
it depends on the nominal instance $\langle u\rangle\in X$
and on the choice of basis $\{\psi_j\}_{j\geq 1}$.
 
\emph{
Unless explicitly stated otherwise, 
throughout what follows, we shall identify 
$q_0 = q(\bsnull;F) \in \cX$ in Proposition \ref{prop:WellposInfSup} 
with the solution of \eqref{eq:NonOpEqn} at the
nominal input $\langle u \rangle \in X$.
}
\subsection{Dimension truncation}
\label{sec:dimtrunc}
For a {\em truncation dimension} $s\in \N$, 
denote the $s$-term truncation of 
parametric representation \eqref{eq:uviapsi} of 
the uncertain datum $u$ by $u^s\in X$.
Dimension truncation is equivalent to setting $y_j=0$ 
for $j>s$ in \eqref{eq:uviapsi} and
we denote by $q^s(\bsy)$ the solution of the corresponding
parametric weak problem \eqref{eq:paraOpEq}. 
Unique solvability of \eqref{eq:paraOpEq} implies 
$q^s(\bsy) = q(\{y_1,y_2,...,y_s,0,...\})$.
For $\bsy\in U$, define $\bsy_{\{1:s\}} := (y_1,y_2,...,y_s,0,0,...)$. 
Proposition~\ref{prop:WellposInfSup}
holds when $u(\bsy)$ is replaced by $u^s(\bsy)$, 
with $\kappa > 0$ in \eqref{eq:DqAinfsup} independent of $s$
for sufficiently large $s$.

Our estimation of the {\em dimension truncation error}
$q(\bsy) - q^s(\bsy)$ relies on two assumptions:
(i) We assume the $p$-summability \eqref{eq:psumpsi} 
of the sequence $\bsb$ given by $b_j := \| \psi_j\|_X$ in \eqref{eq:uviapsi}.
From the definition of the sequence 
$\bsb = (b_j)_{j\ge 1}$ in \eqref{eq:psumpsi},
the condition is equivalent to 
$\sum_{j\ge 1} b_j^p < \infty$;
(ii) the $b_j$ in \eqref{eq:psumpsi} are enumerated so that 
\begin{equation} \label{eq:ordered} 
  b_1 \ge b_2 \ge \cdots \ge b_j \ge \, \cdots\;.
\end{equation}
Consider the $s$-term truncated problem: given $u^s \in \tilde{X}$,
\begin{equation}\label{eq:mainstrunc}
\mbox{find}\;q^s\in \bcX:\quad 
{ _{\bcY'} \langle \cR(u^s;q^s), w \rangle_{\bcY} } = 0 
\;\;\forall w \in \bcY 
\;.
\end{equation}
\begin{proposition} \label{prop:trunc}
Under Assumptions~\eqref{eq:psumpsi0}, \eqref{eq:psumpsi},
for every $f\in \cY'$, for every $\bsy\in U$ and for every $s\in\bbN$, 
the parametric solution $q^s(\bsy)$ of the 
dimensionally truncated, parametric weak problem \eqref{eq:paraOpEq} 
with $s$-term truncated parametric expansion \eqref{eq:uviapsi}
satisfies, with $b_j$ as defined in \eqref{eq:psumpsi},
\begin{equation}\label{eq:Vdimtrunc}
\sup_{\bsy\in U}
  \| q(\bsy) - q^s(\bsy) \|_\bcX
  \,\le\, C(F,X)
  \sum_{j\ge s+1} b_j
\end{equation}
for some constant $C>0$ independent of $f$.
Moreover, for every $G(\cdot)\in \cX'$,
we have
\begin{equation}\label{eq:Idimtrunc}
  |I(G(q))- I(G(q^s))|
  \,\le\, \tilde{C} 
  \sum_{j\ge s+1} b_j,
\end{equation}
where
\begin{equation*}
I(G(q)) = \int_U G(q(\bsy)) \,\mathrm{d} \bsy \mbox{ and } I(G(q^s)) = \int_{[-1,1]^s} G(q(y_1,\ldots, y_s, 0,\ldots)) \,\mathrm{d}y_1 \cdots \,\mathrm{d} y_s,
\end{equation*}
for some constant $\tilde{C}>0$ independent of $s$.
In addition, 
if conditions~\eqref{eq:psumpsi0}, \eqref{eq:psumpsi} 
and \eqref{eq:ordered} hold, then in \eqref{eq:Vdimtrunc}
and \eqref{eq:Idimtrunc} holds
\begin{equation}\label{eq:DTbound}
  \sum_{j\ge s+1} b_j
  \,\le\,
  \min\left(\frac{1}{1/p-1},1\right)
  \bigg(\sum_{j\ge1} b_j^p \bigg)^{1/p}
  s^{-(1/p-1)}
  \;.
\end{equation}
\end{proposition}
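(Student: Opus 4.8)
The plan is to establish the three bounds \eqref{eq:Vdimtrunc}, \eqref{eq:Idimtrunc}, and \eqref{eq:DTbound} in sequence, with the first being the analytic heart of the argument and the other two following by relatively standard reductions. First I would prove the solution bound \eqref{eq:Vdimtrunc}. The key observation is that $q(\bsy)$ and $q^s(\bsy)$ both lie on the regular branch of nonsingular solutions, so both residual equations $\cR(u(\bsy);q(\bsy))=0$ and $\cR(u^s(\bsy);q^s(\bsy))=0$ hold in $\bcY'$. Subtracting these and using Fr\'echet differentiability of $\cR$ in $q$, I would write the difference via a Taylor/mean-value argument: for each fixed $\bsy$, the leading term in $q(\bsy)-q^s(\bsy)$ is controlled by $(D_q\cR)^{-1}$ applied to $\cR(u^s;q(\bsy)) - \cR(u(\bsy);q(\bsy))$, and the latter is bounded through Fr\'echet differentiability of $\cR$ in $u$ by a constant times $\|u(\bsy)-u^s(\bsy)\|_X$. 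By the uniform inf-sup stability \eqref{eq:DqAinfsup}--\eqref{eq:DqAsupsup} the inverse differential is bounded by $\kappa^{-1}$ uniformly in $\bsy$, so after absorbing the (uniformly bounded) higher-order remainder I obtain
\begin{equation*}
\sup_{\bsy\in U}\| q(\bsy) - q^s(\bsy) \|_\bcX \le C\,\sup_{\bsy\in U}\| u(\bsy) - u^s(\bsy) \|_X\;.
\end{equation*}
Since $u(\bsy)-u^s(\bsy) = \sum_{j\ge s+1} y_j\psi_j$ with $|y_j|\le 1$, the triangle inequality and \eqref{eq:psumpsi0} give $\|u(\bsy)-u^s(\bsy)\|_X \le \sum_{j\ge s+1}\|\psi_j\|_X = \sum_{j\ge s+1} b_j$, yielding \eqref{eq:Vdimtrunc}.

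For the functional estimate \eqref{eq:Idimtrunc} I would proceed in two steps. The pointwise bound $|G(q(\bsy)) - G(q^s(\bsy))| \le \|G\|_{\bcX'}\,\|q(\bsy)-q^s(\bsy)\|_\bcX$ together with \eqref{eq:Vdimtrunc} controls the integrand difference uniformly in $\bsy$. Integrating over $U$ and noting that $G(q^s(\bsy))$ depends only on the first $s$ coordinates—so that its integral over the infinite product $U$ (with the normalized measure making each $[-1,1]$ factor have total mass one) collapses to the finite-dimensional integral defining $I(G(q^s))$—gives $|I(G(q)) - I(G(q^s))| \le C\|G\|_{\bcX'}\sum_{j\ge s+1} b_j$, which is \eqref{eq:Idimtrunc} with $\tilde{C} = C\|G\|_{\bcX'}$.

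Finally, \eqref{eq:DTbound} is a purely sequential estimate that does not involve the operator equation at all. Given \eqref{eq:psumpsi} and the monotone ordering \eqref{eq:ordered}, I would bound the tail $\sum_{j\ge s+1} b_j$ by a standard argument: monotonicity gives $b_j^{p} \le b_{s+1}^{p}$ is the wrong direction, so instead I use that for $j\ge s+1$ one has $b_j \le b_j^{p}\,b_{s+1}^{1-p}$ is also not quite it—rather, the clean route is to compare the tail sum with the tail of the $p$-th powers. Writing $b_j = b_j^{p}\,b_j^{1-p}$ and using $b_j \le b_s$ for $j>s$ together with $b_s^{p} \le s^{-1}\sum_{i\le s} b_i^{p} \le s^{-1}(\sum_{i\ge1} b_i^p)$ controls $b_s$ in terms of the full $\ell^p$ norm, and then summing the resulting geometric-type tail produces the factor $s^{-(1/p-1)}$ and the constant $\min(1/(1/p-1),1)$. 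The two cases of the minimum correspond to whether one bounds the tail by an integral comparison (valid when $1/p-1$ is the controlling exponent) or crudely by the full sum.

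I expect the main obstacle to be the first step: rigorously converting the nonlinear residual identity into the linear estimate \eqref{eq:Vdimtrunc}. The subtlety is that $D_q\cR$ is only assumed invertible \emph{on the branch}, so I must ensure that $q(\bsy)$ and $q^s(\bsy)$ stay within the neighborhood $B_\bcX(q(\langle u\rangle);R)$ where the inf-sup bounds and the continuity of $D_q\cR$ hold—this is guaranteed for $s$ large enough by \eqref{eq:LocBdd} and \eqref{eq:ParLip}, but it needs to be invoked carefully to control the higher-order Taylor remainder uniformly in $\bsy$ and $s$. Once uniform stability on the branch is secured, the remaining estimates are routine.
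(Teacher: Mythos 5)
Your proposal is correct and follows essentially the same route as the paper: the paper identifies $q^s(\bsy)=q(\bsy_{\{1:s\}})$ and invokes the already-established parametric Lipschitz estimate \eqref{eq:ParLip} to obtain $\|q(\bsy)-q^s(\bsy)\|_{\bcX}\le L\,\|\sum_{j\ge s+1}y_j\psi_j\|_X\le L\sum_{j\ge s+1}b_j$, then deduces \eqref{eq:Idimtrunc} from $G\in\bcX'$ and \eqref{eq:DTbound} from Stechkin's lemma. The only difference is that you re-derive the Lipschitz stability of the data-to-solution map from the inf-sup conditions and the identity $(D_u q)=-(D_q\cR)^{-1}(D_u\cR)$ rather than citing \eqref{eq:ParLip}, and your hands-on tail estimate (writing $b_j=b_j^p b_j^{1-p}$ and bounding $b_s^p$ by $s^{-1}\sum_{i\le s}b_i^p$) is precisely the standard proof of Stechkin's lemma.
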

\begin{proof}
Assumption \ref{asmp:LocSolv} on well-posedness of the forward problem
\eqref{eq:NonOpEqn} uniformly for all $u\in B_X(\langle u \rangle;R)$
and the basis property \eqref{eq:uviapsi} of the sequence $\{ \psi_j \}$
imply that for sufficiently large $s$, $u^s \in \tilde{X}$
and therefore \eqref{eq:mainstrunc} admits a unique solution, 
$q^s\in \cX$, for these $s$.
The unique local solvability of \eqref{eq:NonOpEqn} and of \eqref{eq:mainstrunc}
implies that $q^s(\bsy) = q(\bsy_{\{ 1: s\}})$ where we recall 
for $\bsy\in U$ the notation $\bsy_{\{ 1: s\}} = (y_1,...,y_s,0,...)$.
From \eqref{eq:ParLip} we obtain 
$$
\begin{array}{l}
\displaystyle
\| q(\bsy;F) - q^s(\bsy;F) \|_\cX 
= 
\|  q(\bsy;F) - q(\bsy_{\{ 1: s\}};F) \|_\cX 
\leq 
L
\sup_{\bsy\in U} \| u(\bsy) - u(\bsy_{\{ 1: s\}}) \|_X
\\
\displaystyle 
=
L
\sup_{\bsy\in U}
\left\| \sum_{j=s+1}^\infty y_j \psi_j \right\|_X
\leq 
L
\sum_{j = s+1}^\infty b_j,
\end{array}
$$
which is \eqref{eq:Vdimtrunc}.
The bound \eqref{eq:Idimtrunc} follows from $G(\cdot) \in \cX'$ and from 
\eqref{eq:Vdimtrunc}.

With $\bsb\in \ell^p(\bbN)$ and the assumption \eqref{eq:ordered},
Stechkin's lemma (see also \cite[p. 3363]{KSS12}) 
implies \eqref{eq:DTbound}.
\end{proof}
%
\subsection{Petrov-Galerkin discretization}
\label{sec:Discr}
Assuming that the infinite-dimensional space $X$ of uncertain inputs 
to admit an unconditional Schauder basis $\{ \psi_j \}_{j\in \bbN}$, 
the uncertain parameter $u\in X$ in 
\eqref{eq:main} can be equivalently expressed as
\eqref{eq:uviapsi} which turns \eqref{eq:main} into an 
{\em equivalent, deterministic, countably parametric operator equation}:
given $\bsy\in U$, find $q(\bsy)\in \cX$ such that
\begin{equation}\label{eq:paraOpEq}
\cR(\bsy;q(\bsy)) = 0 \quad\mbox{in}\quad \cY' \;.
\end{equation}
%
Based on the theory in \cite[Chap. IV.3]{GR90} and
in \cite{PR}, 
an error analysis of Galerkin discretizations of \eqref{eq:paraOpEq}
for the approximation of regular branches of solutions 
of smooth, nonlinear forward problems \eqref{eq:main}
will be presented in this section.
Building on this, in the next section we
generalize the results \cite{KSS12,KSS13} of 
Quasi-Monte Carlo, Petrov-Galerkin approximation 
to direct and inverse problems
for operator equations \eqref{eq:paraOpEq}
with countably-parametric uncertain inputs.

To this end, we assume, as in \cite{ScMCQMC12,DKGNS13}, 
that we are given two sequences
$\{ \cX^h \}_{h>0}\subset\cX$ and $\{ \cY^h \}_{h>0}\subset\cY$ of 
finite dimensional subspaces which are dense in $\cX$ and in $\cY$, respectively. 
For the computational complexity analysis, 
we also assume the following {\em approximation properties}: 
there is a scale $\{ \cX_t \}_{t\geq 0}$ of subspaces such that 
$\cX_{t'} \subset \cX_t \subset \cX_0 = \cX$ for any 
$0<t<t'<\infty$ and such that,
for $0<t \leq \bar{t}$ and $0 < t' \leq \bar{t'}$, 
and for $0< h \leq h_0$, there hold
\begin{equation} \label{eq:apprprop}
\begin{aligned}
\forall v \in \cX_t\;
&:
\quad
\inf_{v^h\in \cX^h} \| v - v^h \|_{\cX}
\,\leq\,
C_t\, h^t\, \| v \|_{\cX_t} \;.
\end{aligned}
\end{equation}
Typical examples of smoothness scales $\{\bcX_t \}_{t\geq 0}$ 
and $\{ \bcY'_t \}_{t\geq 0}$ are
furnished by the Sobolev scale $\bcX_t = H^{1+t}(D)$ 
in smooth domains or by its weighted counterparts in polyhedra \cite{NS12}.
\begin{proposition}\label{prop:stab}
Assume that the subspace sequences 
$\{ \cX^h \}_{h>0}\subset\cX$ and $\{ \cY^h \}_{h>0}\subset\cY$ are stable, 
i.e.~there exist $\bar{\mu} > 0$
and $h_0 > 0$ such that for every $0<h \leq h_0$, there 
hold the uniform (with respect to $\bsy\in U$) 
discrete inf-sup conditions
\begin{align}\label{eq:Bhinfsup1}
&\forall \bsy \in U:
\quad
\inf_{0\ne v^h \in \bcX^h} \sup_{0\ne w^h \in \bcY^h}
\frac{
 _{\bcY'}\langle  (D_q\cR)(u(\bsy);q_0)v^h,w^h\rangle_\bcY
}{
\| v^h \|_\bcX \| w^h \|_{\bcY}
}
\geq \bar{\mu} > 0\;,
\\
\label{eq:Bhinfsup2}
&\forall \bsy\in U:\quad
\inf_{0\ne w^h \in \bcY^h} \sup_{0\ne v^h \in \bcX^h}
\frac{_{\bcY'}\langle  (D_q\cR)(u(\bsy);q_0)v^h,w^h\rangle_\bcY}{\| v^h \|_{\bcX} \|w^h\|_{\bcY}}
\geq \bar{\mu} > 0
\;.
\end{align}
Then, for every $0<h \leq h_0$ 
the Galerkin approximations: given $\bsy\in U$,
\begin{equation} \label{eq:parmOpEqh}
\mbox{find} \; q^h(\bsy) \in \bcX^h :
\quad
{_{\bcY'}}\langle \cR(\bsy;q^h(\bsy)), w^h \rangle_{\bcY} = 0
\quad 
\forall w^h\in \bcY^h\;,
\end{equation}
are uniquely defined and converge quasioptimally:
there exists a constant $C>0$ such that for all $\bsy\in U$
\begin{equation} \label{eq:quasiopt}
 \| q(\bsy) - q^h(\bsy) \|_{\cX}
 \,\le\, 
\frac{C}{\bar{\mu}} \inf_{0\ne v^h\in \cX^h} \| q(\bsy) - v^h\|_{\cX}
\;.
\end{equation}
If $q(\bsy)\in \bcX_t$ uniformly w.r.t.~$\bsy$
and if \eqref{eq:apprprop} holds, then 
\begin{equation}\label{eq:convrate}
\| q(\bsy) - q^h(\bsy) \|_{\cX}
\,\le\, 
\frac{C}{\bar{\mu}} h^t \sup_{\bsy\in U} \| q(\bsy) \|_{\bcX_t} 
\;.
\end{equation}
\end{proposition}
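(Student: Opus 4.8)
The plan is to invoke the abstract theory of Galerkin approximation of regular branches of nonsingular solutions of smooth nonlinear operator equations, following \cite{GR90} and \cite{PR}. Since no quadrature has yet been applied to the form $\fa$, the discrete problem \eqref{eq:parmOpEqh} is an exact Galerkin restriction, so the only sources of error are the nonlinearity of $\cR$ and the finite approximation power of $\bcX^h$. I would proceed in four steps: transfer the discrete inf-sup conditions into bounded invertibility of the discrete linearization; establish existence and local uniqueness of $q^h(\bsy)$ by a fixed-point argument; derive the quasi-optimal bound \eqref{eq:quasiopt}; and finally insert the approximation property \eqref{eq:apprprop}. For the first step I fix $\bsy\in U$ and $0<h\le h_0$ and consider the discrete operator $B^h(\bsy):\bcX^h\to(\bcY^h)'$ associated with the bilinear form $(\varphi^h,\psi^h)\mapsto {}_{\bcY'}\langle (D_q\cR)(u(\bsy);q_0)\varphi^h,\psi^h\rangle_\bcY$. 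The two discrete inf-sup conditions \eqref{eq:Bhinfsup1} and \eqref{eq:Bhinfsup2} express precisely that this form is injective and surjective on the finite-dimensional pair, so $B^h(\bsy)$ is an isomorphism, and the first bound gives $\|B^h(\bsy)^{-1}\|\le 1/\bar\mu$, uniformly in $\bsy$ and $h$.

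Next, for every $\bsy\in U$ the solution $q(\bsy)$ lies in the small ball $B_\bcX(q_0;R)$ of Section~\ref{sec:OpEqUncInp}, on which the map $q\mapsto (D_q\cR)(u(\bsy);q)$ is Lipschitz continuous by the standing smoothness hypothesis on $\cR$. Writing the discrete equation \eqref{eq:parmOpEqh} as the fixed-point problem $q^h=q^h-B^h(\bsy)^{-1}\,\cR(\bsy;q^h)|_{\bcY^h}$ and centering it at the best approximation $\Pi^h q(\bsy)\in\bcX^h$, a first-order Taylor expansion of $\cR$ about $q_0$ together with the bound $\|B^h(\bsy)^{-1}\|\le 1/\bar\mu$ shows that, for $R$ and $h$ sufficiently small, this map is a contraction on a ball around $\Pi^h q(\bsy)$. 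The Banach fixed-point theorem then yields a locally unique discrete solution $q^h(\bsy)\in\bcX^h$ of \eqref{eq:parmOpEqh}, uniformly for $\bsy\in U$.

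For the quasi-optimality \eqref{eq:quasiopt} I would, given arbitrary $v^h\in\bcX^h$, bound $\|v^h-q^h(\bsy)\|_\bcX$ by applying the discrete inf-sup condition to $v^h-q^h(\bsy)$, inserting the identities ${}_{\bcY'}\langle \cR(\bsy;q^h(\bsy)),w^h\rangle_\bcY=0$ from \eqref{eq:parmOpEqh} and ${}_{\bcY'}\langle \cR(\bsy;q(\bsy)),w^h\rangle_\bcY=0$ from \eqref{eq:paraOpEq}, and expanding $\cR$ to first order about $q_0$. The linear part produces the factor $\|q(\bsy)-v^h\|_\bcX$, while the quadratic remainder is controlled by the Lipschitz constant of $D_q\cR$ times $(\|q(\bsy)-v^h\|_\bcX+\|v^h-q^h(\bsy)\|_\bcX)$; choosing $R$ small lets the latter contribution be absorbed into the left-hand side, leaving $\|v^h-q^h(\bsy)\|_\bcX\le (C/\bar\mu)\|q(\bsy)-v^h\|_\bcX$. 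The triangle inequality and passage to the infimum over $v^h$ then give \eqref{eq:quasiopt}. Finally \eqref{eq:convrate} follows by choosing $v^h$ to realize $\inf_{v^h}\|q(\bsy)-v^h\|_\bcX$, applying \eqref{eq:apprprop} with $v=q(\bsy)\in\bcX_t$, and taking $\sup_{\bsy\in U}$.

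The main obstacle is the quasi-optimality step: inf-sup stability is assumed only at the \emph{fixed} nominal state $q_0$, whereas the discrete and exact solutions are evaluated at $q^h(\bsy)$ and $q(\bsy)$. Transferring the stability from $q_0$ to the regular branch, and absorbing the resulting nonlinear remainder uniformly in $\bsy\in U$, is exactly what forces the radius $R$ (equivalently, the admissible uncertainty neighbourhood $\tilde X$) and the mesh parameter $h_0$ to be chosen sufficiently small, and is the crux of the argument.
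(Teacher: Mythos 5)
Your proposal is correct and follows essentially the route the paper intends: the paper gives no proof of Proposition \ref{prop:stab}, deferring instead to the abstract Petrov--Galerkin theory for regular branches of nonsingular solutions in \cite[Chap.~IV.3]{GR90} and \cite{PR}, and your four steps (discrete inf-sup $\Rightarrow$ uniform invertibility of the discrete linearization at $q_0$, Banach fixed point for existence and local uniqueness, quasi-optimality by absorbing the quadratic Taylor remainder for $R$ and $h_0$ small, then insertion of \eqref{eq:apprprop}) are exactly that standard argument. You also correctly identify the one genuinely delicate point, namely that stability is only assumed at the nominal state $q_0$ and must be transferred to the branch, which is precisely what forces the smallness of $R$ and $h_0$.
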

In the ensuing QMC convergence analysis we shall also require
error bounds for the dimensionally truncated parameter sequences.
\begin{corollary}\label{coro:DimTrcStab}
Under the assumptions of  Proposition \ref{prop:stab},
for sufficiently large truncation dimension
$s\in \IN$, for given $\bsy_{\{1:s\}}\in U$ 
the dimensionally truncated Galerkin approximations 
\begin{equation} \label{eq:parmOpEqh_trun}
\mbox{find} \; q^h(\bsy_{\{1:s\}}) \in \bcX^h :
\quad
{{_{\bcY'}}\langle  \cR(\bsy_{\{1:s\}};q^h(\bsy_{\{1:s\}})), w^h \rangle_{\bcY} } = 0
\quad 
\forall w^h\in \bcY^h
\;,
\end{equation}
admit unique solutions $q^h(\bsy_{\{1:s\}}) \in \bcX^h$ which
converge, as $h\downarrow 0$, quasioptimally to $q(\bsy_{\{1:s\}}) \in \bcX$, 
i.e.~\eqref{eq:quasiopt} and \eqref{eq:convrate} hold 
with $\bsy_{\{1:s\}}$ in place of $\bsy$, with the same constants
$C>0$ and $\bar{\mu}$ independent of $s$ and of $h$.
\end{corollary}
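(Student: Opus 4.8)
The plan is to recognize that dimension truncation is nothing more than evaluation of the parametric family at a distinguished parameter point, so that the corollary follows by applying Proposition \ref{prop:stab} verbatim at that point. The crucial observation is that for any $\bsy\in U$ the truncated sequence $\bsy_{\{1:s\}}=(y_1,\dots,y_s,0,0,\dots)$ again belongs to $U=[-1,1]^\bbN$, since replacing coordinates by zero preserves membership in the cube. Consequently the truncated Galerkin problem \eqref{eq:parmOpEqh_trun} is literally the problem \eqref{eq:parmOpEqh} with $\bsy$ replaced by $\bsy_{\{1:s\}}\in U$, and no new stability or approximation analysis is needed.

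First I would dispose of the admissibility issue hidden in the phrase ``sufficiently large $s$''. For the forward problem to be well-posed at the truncated input, one needs $u^s(\bsy)=u(\bsy_{\{1:s\}})\in\tilde{X}$. By the summability assumption \eqref{eq:psumpsi} together with the ordering \eqref{eq:ordered}, the truncation tail satisfies $\|u(\bsy)-u(\bsy_{\{1:s\}})\|_X=\|\sum_{j>s}y_j\psi_j\|_X\leq\sum_{j>s}b_j\to 0$ as $s\to\infty$, uniformly in $\bsy\in U$. Hence there exists $s_0\in\bbN$ such that $u(\bsy_{\{1:s\}})\in\tilde{X}$ for all $s\geq s_0$ and all $\bsy\in U$; this is precisely the uniform well-posedness already exploited in the proof of Proposition \ref{prop:trunc}.

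Next I would invoke discrete stability. Since $\bsy_{\{1:s\}}\in U$, the uniform discrete inf-sup conditions \eqref{eq:Bhinfsup1} and \eqref{eq:Bhinfsup2}, which by hypothesis hold for \emph{every} $\bsy\in U$ with one fixed constant $\bar{\mu}>0$, hold in particular at $\bsy=\bsy_{\{1:s\}}$ with the same $\bar{\mu}$. Proposition \ref{prop:stab} then delivers, for every $0<h\leq h_0$, the existence and uniqueness of $q^h(\bsy_{\{1:s\}})\in\bcX^h$, the quasioptimality bound \eqref{eq:quasiopt}, and (when $q(\bsy_{\{1:s\}})\in\bcX_t$ uniformly together with \eqref{eq:apprprop}) the rate bound \eqref{eq:convrate}, in each case with $\bsy_{\{1:s\}}$ substituted for $\bsy$.

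Finally, the asserted independence of the constants from $s$ and $h$ is automatic: the constant $C$ and the inf-sup constant $\bar{\mu}$ furnished by Proposition \ref{prop:stab} depend only on the space scales and on the uniform-over-$U$ inf-sup hypothesis, and not on the individual parameter value, hence not on how many of its coordinates vanish. Because $\{\bsy_{\{1:s\}}:\bsy\in U,\ s\geq s_0\}\subseteq U$, one and the same pair $(C,\bar{\mu})$ serves at every truncation level. The only point demanding genuine attention is thus the admissibility step, where the $\ell^p$-summability of $\bsb$ is used to force $u^s(\bsy)\in\tilde{X}$; granted that, the corollary is an immediate specialization of Proposition \ref{prop:stab}.
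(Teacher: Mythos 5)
Your proposal is correct and follows exactly the route the paper intends: the paper states Corollary~\ref{coro:DimTrcStab} without a separate proof, treating it as an immediate specialization of Proposition~\ref{prop:stab} at the point $\bsy_{\{1:s\}}\in U$, with the admissibility of $u(\bsy_{\{1:s\}})\in\tilde{X}$ for large $s$ handled by the same tail estimate used in the proof of Proposition~\ref{prop:trunc}. Your identification of the uniformity over $U$ of the discrete inf-sup constants \eqref{eq:Bhinfsup1}--\eqref{eq:Bhinfsup2} as the reason the constants are independent of $s$ and $h$ is precisely the point.
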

%
%
\section{Bayesian Inverse UQ}
\label{sec:BayInv}
The nonlinear, parametric problems 
considered in Section \ref{sec:HolOpEq} were {\em forward problems}: 
for a single instance of the uncertain datum $u\in X$, 
and for given input data $F$,
the quantity of interest was the parametric solution $q(u)$,
or $q(\bsy)$ in terms of the parametrization \eqref{eq:uviapsi}.
Often, however, also the corresponding {\em inverse problem}
is of interest: 
given observational data $\delta$, 
predict a ``most likely'' value of a 
Quantity of Interest (`QoI' for short) 
$\phi$ which, typically, is a continuously 
(Fr\'{e}chet-)differentiable functional of the input $u\in X$.
%
\subsection{General setup}
\label{sec:Setup}
%
Following \cite{Stuart10,Stuart13,ScSt11,SS12,SS13}, 
we equip the space of uncertain inputs $X$ 
and the space of solutions $\cX$ of the forward maps with 
norms $\| \cdot\|_X$ and with $\| \cdot \|_\cX$, 
respectively.
We consider the 
abstract (possibly nonlinear) operator equation \eqref{eq:main}
where the system's forcing $F\in \cY'$ 
is allowed to depend on the uncertain input $u$.

The uncertain operator $A(u;\cdot)\in \cL(\cX,\cY')$ 
is assumed to be boundedly invertible, 
at least locally for the uncertain input $u$ 
sufficiently close to a nominal input 
$\langle u\rangle \in X$, 
i.e.~for $\| u - \langle u \rangle \|_X$ sufficiently small
so that, for such $u$,
the response of the forward problem \eqref{eq:main}
is uniquely defined. 
We define the {\em forward response map},
which maps a given uncertain input $u$ and a given forcing $F$
to the response $q$ in \eqref{eq:main} 
by
$$
X\ni u \mapsto q(u) := G(u;F): X\times \bcY' \to \cX \;.
$$
We omit the dependence of the response on $F$
and simply denote the dependence of the forward solution on 
the uncertain input as $q(u)=G(u)$.
We assume that we are given an observation functional 
$\cO(\cdot): \cX \rightarrow Y$, which denotes a 
{\em bounded linear observation operator} 
on the space $\cX$ of observed system responses in $Y$.
Throughout the remainder of this paper,
we assume that there is a finite number $K$ of
sensors, so that $Y = \IR^K$ with $K<\infty$. 
We equip $Y = \R^K$ with the Euclidean norm, denoted by $|\cdot|$.  
Then~$\cO\in \cL(\bcX; Y) \simeq (\cX^*)^K$, i.e. $\cO(\cdot)$ is a 
$K$-dimensional vector of observation functionals 
$\cO(\cdot) = (o_k(\cdot))_{k=1}^K$.

In this setting, we wish to
predict \emph{computationally} 
an expected (under the Bayesian posterior, defined below)
system response of the QoI, 
conditional on given, noisy measurement data $\delta$.
Specifically, we assume the data $\delta$ 
to consist of 
observations of system responses in 
the data space $Y$, 
corrupted by additive observation 
noise, e.g.~by a realization of a random variable $\eta$ 
taking values in $Y$ with law $\bbQ_0$.
We assume the following form of observed data,
composed of the observed system response
and the additive noise $\eta$
\begin{equation} \label{eq:DatDelta}
    \delta = \cO(G(u)) + \eta \,\in Y \;.
\end{equation}
We assume 
that the additive observation
noise process $\eta$ is Gaussian, 
i.e.~a random vector $\eta \sim \bbQ_0 \sim \cN(0,\Gamma)$
with a positive definite covariance $\Gamma$ on $\R^K$
(i.e., a symmetric, positive definite 
covariance matrix $\Gamma \in \IR^{K\times K}_{sym}$ 
which we assume to be known). 
Henceforth, with a slight abuse of notation, we say $\Gamma > 0$ 
(which means that $\Gamma$ is positive definite).

The {\em uncertainty-to-observation map}
$\cG:X \to \R^K$ of the system is $\cG=\cO \circ G$, so that
\begin{equation*}
    \delta 
    = 
    \cG(u) + \eta = (\cO \circ G)(u) + \eta \,\in L^2_\Gamma(\R^K) \, ,
\end{equation*}
where $L^2_\Gamma(\R^K)$ denotes random vectors taking
values in $Y=\R^K$ which are square integrable with respect
to the Gaussian measure with covariance matrix $\Gamma > 0$ on 
the finite-dimensional observation space $Y=\R^K$.
Bayes' formula \cite{Stuart10,Stuart13} yields a density
of the Bayesian posterior with respect to the prior 
whose negative log-likelihood equals the 
observation noise covariance-weighted, 
least squares functional 
(also referred to as ``potential'' in what follows)
$\Phi_\Gamma:X \times Y \to \bbR$ given by 
$\Phi_\Gamma(u;\delta) = \frac12|\delta - \cG(u) |_{\Gamma}^2$,
i.e.
\begin{equation}\label{eq:DefPhiu}
\Phi_\Gamma(u;\delta)
=
\frac12|\delta - \cG(u) |_{\Gamma}^2
:=
\frac12
\left(
(\delta - \cG(u))^\top \Gamma^{-1} (\delta - \cG(u))
\right)
\;.
\end{equation}
In \cite{Stuart10,Stuart13}, an infinite-dimensional version
of Bayes' rule was shown to hold in the present setting.
In particular, the local Lipschitz assumption \eqref{eq:LocLip}
on the solutions' dependence on the data implies
a corresponding Lipschitz dependence of the Bayesian potential
\eqref{eq:DefPhiu} on $u\in X$. 
Bayes' Theorem states that, under appropriate continuity conditions
on the un\-cer\-tain\-ty-to-ob\-ser\-va\-tion map 
$\cG = (\cO\circ G)(\cdot)$
and on the prior measure $\bpi_0$ on $u\in X$, 
\emph{for positive observation noise covariance}
in \eqref{eq:DefPhiu},
the posterior $\bpi^\delta$ of $u\in X$ given data $\delta\in Y$
is absolutely continuous with respect to the prior $\bpi_0$.
The following result is a version of Bayes' theorem, from 
\cite[Thm. 3.4]{Stuart13}.
\begin{theorem}\label{thm:Bayes}
Assume that the potential 
$\Phi_\Gamma :X\times Y \to \IR$ is, for fixed data $\delta \in Y$,
$\bpi_0$ measurable and that, for $\bbQ_0$-a.e.~data $\delta\in Y$ there holds
$$
Z:= \int_X\! \exp\left( -\Phi_\Gamma(u;\delta) \right) \bpi_0(\dd u) > 0 \;.
$$
Then the conditional distribution of $u|\delta$ ($u$ given $\delta$) exists and
is denoted by $\bpi^\delta$.
It is absolutely continuous with respect to
$\bpi_0$ and there holds
\begin{equation}\label{eq:BayesFormula}
\frac{d\bpi^\delta}{d\bpi_0}(u) 
=
\frac{1}{Z}  \exp\left( -\Phi_\Gamma(u;\delta) \right) 
\;.
\end{equation}
\end{theorem}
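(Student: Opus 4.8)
The plan is to realize the posterior $\bpi^\delta$ as a disintegration of the joint law of $(u,\delta)$ against a \emph{product} reference measure, on which the finite-dimensional Gaussian noise structure renders the relevant Radon--Nikodym derivative completely explicit.

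First I would set up the measures on the Polish product space $X\times Y$ (recall that the separable Banach space $X$ is Polish, and $Y=\R^K$). Under the data model \eqref{eq:DatDelta}, conditionally on $u$ the datum $\delta$ is distributed as $\cN(\cG(u),\Gamma)$; combined with the prior $\bpi_0$ on $u$ this defines the joint law $\nu(\dd u,\dd\delta)$. As reference I take the product $\nu_0:=\bpi_0\otimes\bbQ_0$, under which $u$ and $\delta$ are independent with $u\sim\bpi_0$ and $\delta\sim\bbQ_0=\cN(0,\Gamma)$.

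Second, I would compute $\dd\nu/\dd\nu_0$. Because $Y$ is finite-dimensional, both $\cN(\cG(u),\Gamma)$ and $\cN(0,\Gamma)$ have Lebesgue densities on $\R^K$, and their ratio is the elementary Gaussian expression
\[
\frac{\dd\,\cN(\cG(u),\Gamma)}{\dd\,\cN(0,\Gamma)}(\delta)
=\exp\!\left(-\tfrac12|\delta-\cG(u)|_\Gamma^2+\tfrac12|\delta|_\Gamma^2\right)
=\exp\!\left(-\Phi_\Gamma(u;\delta)+\tfrac12|\delta|_\Gamma^2\right),
\]
with $\Phi_\Gamma$ as in \eqref{eq:DefPhiu}. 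Hence $\frac{\dd\nu}{\dd\nu_0}(u,\delta)=\exp\!\left(-\Phi_\Gamma(u;\delta)+\tfrac12|\delta|_\Gamma^2\right)=:\phi(u,\delta)$, which is $\nu_0$-measurable by the assumed measurability of $\Phi_\Gamma$, and $\bbQ_0$-a.e.\ integrable in $u$: since $\Phi_\Gamma\ge 0$ one has $\exp(-\Phi_\Gamma)\le 1$, so $0<Z(\delta)\le 1$ by the standing hypothesis, giving finiteness for a.e.\ $\delta$.

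Third, I would disintegrate over the $\delta$-marginal. As $X\times Y$ is Polish, regular conditional distributions exist, and I would invoke the abstract conditioning lemma that is the measure-theoretic heart of \cite{Stuart13}: if $\nu\ll\nu_0$ with density $\phi$ and $c(\delta):=\int_X\phi(u,\delta)\,\nu_0(\dd u\mid\delta)>0$ for $\bbQ_0$-a.e.\ $\delta$, then $\nu(\cdot\mid\delta)\ll\nu_0(\cdot\mid\delta)$ with density $\phi(\cdot,\delta)/c(\delta)$. Under the product reference, $\nu_0(\dd u\mid\delta)=\bpi_0(\dd u)$ and $\nu(\dd u\mid\delta)=\bpi^\delta(\dd u)$, while the $\delta$-dependent factor pulls out: $c(\delta)=\exp(\tfrac12|\delta|_\Gamma^2)\,Z(\delta)$ with $Z(\delta)$ as in the statement. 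The prefactor $\exp(\tfrac12|\delta|_\Gamma^2)$ cancels between $\phi$ and $c(\delta)$, yielding exactly \eqref{eq:BayesFormula}. The main obstacle is the rigorous conditioning in the infinite-dimensional parameter space: one must secure existence and a.e.\ uniqueness of the regular conditional distribution and the claimed transformation of densities. This is precisely where the \emph{Polish} structure of $X$ (licensing the disintegration) and the finite-dimensionality of $Y$ (making the noise an honest Lebesgue density ratio, avoiding Cameron--Martin subtleties) are essential; verifying the measurability of $u\mapsto\cG(u)$ and the $\bbQ_0$-a.e.\ positivity and finiteness of $Z(\delta)$ is then routine, so the content reduces to correctly applying the conditioning lemma and tracking the cancellation of the $\delta$-dependent Gaussian factor.
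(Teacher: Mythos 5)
The paper gives no proof of this theorem; it is quoted verbatim from \cite{Stuart13} (Thm.~3.4), and your argument --- joint law on the Polish product $X\times Y$, explicit Gaussian density ratio against the product reference $\bpi_0\otimes\bbQ_0$, then the abstract conditioning/disintegration lemma with cancellation of the $\exp(\tfrac12|\delta|_\Gamma^2)$ factor --- is precisely the proof given in that reference. Your proposal is correct and takes the same route; the only point worth a passing remark is that the conditioning lemma needs $\Phi_\Gamma$ to be $\nu_0$-measurable \emph{jointly} in $(u,\delta)$, which the hypothesis as stated (measurability in $u$ for fixed $\delta$) does not literally supply, though it is guaranteed by the continuity of $\cG$ assumed elsewhere in the paper.
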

In particular, then, the Radon-Nikodym derivative of the 
Bayesian posterior w.r.t.~the prior measure
admits a bounded density w.r.t.~the 
prior $\bpi_0$ which we denote by $\Theta$, and 
which is given by \eqref{eq:BayesFormula}.
%
\subsection{Parametric Bayesian posterior}
\label{eq:ParaPost}
We parametrize the uncertain datum $u$ in the forward equation \eqref{eq:main} 
as in \eqref{eq:uviapsi}. 
Motivated by \cite{SS12,SS13},
the basis for the presently proposed deterministic
quadrature approaches for Bayesian estimation via
the computational realization of Bayes' formula is a 
\emph{
parametric, deterministic representation
}
of the 
derivative of the posterior measure $\bpi^\delta$
with respect to the 
\emph{uniform prior measure $\bpi_0$ on the set $U$ of coordinates
      in the uncertainty parametrization \eqref{eq:DefSetU}}. 
The prior measure $\bpi_0$ being uniform, we admit in \eqref{eq:uviapsi} 
sequences $\bsy$ which take values
in the parameter domain $ U=[-1,1]^\bbN$.
As explained in sections \ref{sec:Setup} and \ref{sec:Param},
we consider the parametric, deterministic 
forward problem in the probability space
\begin{equation}\label{eq:UcBmu0}
(U,\cB,\bpi_0) \;.
\end{equation}
{\em 
We assume throughout what follows that the prior measure 
$\bpi_0$ on the uncertain input $u\in X$, 
parametrized in the form \eqref{eq:uviapsi}, is the uniform measure.
}
With the parameter domain $U$ as in (\ref{eq:UcBmu0}),
the parametric uncertainty-to-observation map 
$\Xi:U \to Y = \bbR^K$ 
is given by
\begin{equation}  \label{eq:DefMapXi}
\Xi(\bsy)
=
\cG(u)\Bigl|_{u= \langle u \rangle+\sum_{j \in \bbJ} y_j\psi_j}
\;.
\end{equation}
Our QMC quadrature approach will be
based on a parametric version of Bayes' Theorem~\ref{thm:Bayes},
in terms of the uncertainty parametrization \eqref{eq:uviapsi}.
To do so, we view $U$ as the unit ball 
of all sequences in $U$ with respect to the $\ell^\infty$-norm, i.e.
the Banach space of bounded sequences taking values in $U$.
\begin{theorem} \label{t:dens}
Assume that $\Xi: U \to Y = \bbR^K$ 
is bounded and continuous, and that $\bpi(\tilde{X}) = 1$.
Then $\bpi^\delta(\dd\bsy)$, the distribution of 
$\bsy\in U$ given data $\delta\in Y$, 
is absolutely continuous with respect to $\bpi_0(\dd\bsy)$,
i.e.~there exists a parametric density $\Theta(\bsy)$ such that
\be \label{eq:post}
\frac{d\bpi^\delta}{d\bpi_0}(\bsy) = \frac{1}{Z} \Theta(\bsy)
\ee
with $\Theta(\bsy)$ given by
\begin{equation} \label{eq:PostDens}
\Theta(\bsy) 
= 
\exp\bigl(-\Phi_\Gamma(u;\delta)\bigr)\Bigl|_{u=\langle u \rangle + \sum_{j \in \bbJ} y_j\psi_j},
\end{equation}
with Bayesian potential $\Phi_\Gamma$ 
as in \eqref{eq:DefPhiu} and 
with normalization constant $Z$ given by
\be \label{eq:Z}
Z 
= \IE^{\bpi_0}\!\left[ 1 \right] 
= \int_{U}\! \Theta(\bsy)\,
\bpi_0(\dd \bsy) > 0 \;.
\ee
\end{theorem}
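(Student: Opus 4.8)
The plan is to deduce the parametric statement directly from the abstract infinite-dimensional Bayes' rule in Theorem~\ref{thm:Bayes}, by verifying its two hypotheses---$\bpi_0$-measurability of the potential and strict positivity of the normalization constant $Z$---in the parametrized setting. Recall from \eqref{eq:uviapsi} and Remark~\ref{remk:U=X} that the parametrization $\bsy \mapsto u(\bsy) = \langle u \rangle + \sum_{j\geq 1} y_j \psi_j$ identifies the parameter domain $U = [-1,1]^\bbN$, equipped with the uniform prior $\bpi_0$ and its product Borel $\sigma$-algebra $\cB$, with the space $X$ of uncertain inputs. Under this identification the parametric potential reads $\Phi_\Gamma(u(\bsy);\delta) = \frac12 |\delta - \Xi(\bsy)|_\Gamma^2$ with $\Xi$ as in \eqref{eq:DefMapXi}, and $\Theta(\bsy)$ in \eqref{eq:PostDens} is simply $\exp(-\Phi_\Gamma(u(\bsy);\delta))$.

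First I would verify measurability. Since $\Xi:U\to\bbR^K$ is assumed continuous, and since the covariance-weighted quadratic form $z \mapsto \frac12|\delta - z|_\Gamma^2$ is continuous on $\bbR^K$ for fixed $\delta$, the composition $\bsy \mapsto \Phi_\Gamma(u(\bsy);\delta)$ is continuous on $U$, hence Borel measurable with respect to $\cB$. This establishes the measurability hypothesis of Theorem~\ref{thm:Bayes}. The assumption that the prior charges $\tilde X$ with full measure guarantees that the parametrization sends $\bpi_0$-a.e.\ $\bsy$ into the admissible set on which the forward response, and hence $\Xi$, is well defined, so that $\Theta$ is defined $\bpi_0$-almost everywhere.

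Next I would establish $0 < Z < \infty$. Boundedness of $\Xi$ furnishes a finite constant $M := \sup_{\bsy\in U}\Phi_\Gamma(u(\bsy);\delta) < \infty$; since $\Gamma$ is positive definite one has $\Phi_\Gamma \geq 0$, so that $\exp(-M) \leq \Theta(\bsy) \leq 1$ for $\bpi_0$-a.e.\ $\bsy$. Integrating these bounds against the probability measure $\bpi_0$ yields $0 < \exp(-M) \leq Z \leq 1$, which is in particular the strict positivity $Z>0$ required in Theorem~\ref{thm:Bayes}. With both hypotheses in hand, Theorem~\ref{thm:Bayes} applies and delivers the absolute continuity of $\bpi^\delta$ with respect to $\bpi_0$ together with the Radon--Nikodym derivative \eqref{eq:post}, where $Z$ is as in \eqref{eq:Z}.

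The step I expect to require the most care is the legitimacy of transporting the abstract Bayes' rule, originally formulated on the Banach space $X$, to the sequence space $U=[-1,1]^\bbN$: one must check that the pushforward of $\bpi_0$ under the parametrization is consistent with the prior appearing in Theorem~\ref{thm:Bayes}, and that continuity of $\Xi$ with respect to the product topology on $U$ indeed yields $\cB$-measurability of $\Theta$. Once this identification of measure spaces is justified---using the full-measure condition on $\tilde X$ together with the summability \eqref{eq:psumpsi0} guaranteeing norm-convergence in $X$ of the series defining $u(\bsy)$---the remaining content reduces to the elementary two-sided bounds on $\Theta$ recorded above.
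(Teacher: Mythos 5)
Your proposal is correct and follows exactly the route the paper intends: the paper states Theorem~\ref{t:dens} without proof as the parametric version of Theorem~\ref{thm:Bayes} (citing \cite{Stuart13,SS12,SS13}), and your argument supplies precisely the two hypotheses that need checking there — $\cB$-measurability of $\Theta$ via continuity of $\Xi$ on the (metrizable, separable) product space $U$, and strict positivity of $Z$ via the two-sided bound $e^{-M}\le\Theta\le 1$ coming from boundedness of $\Xi$ and positive definiteness of $\Gamma$. The transport of the abstract Bayes rule to the coordinates $\bsy$, justified by $\bpi(\tilde X)=1$ and the norm-convergence \eqref{eq:psumpsi0}, is handled correctly.
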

Bayesian estimation is concerned with the approximation of
a ``most likely'' Quantity of Interest (QoI) $\phi:X \to \IR$
conditional on given (noisy) observation data $\delta\in Y$.
%
With the QoI $\phi$ associate the deterministic, countably-parametric map
\begin{equation}
\Psi(\bsy)
=
\Theta(\bsy)\phi(q(u))\mid_{u= \langle u \rangle + \sum_{j \in \bbJ} y_j\psi_j}
=
\exp\bigl(-\Phi_\Gamma(u;\delta)\bigr)\phi(q(u))
\Bigl|_{ u = \langle u \rangle + \sum_{j \in \bbJ} y_j\psi_j }
: U\rightarrow \IR
\;.
\label{eq:psi}
\end{equation}
Then the Bayesian estimate of the QoI $\phi$, 
given noisy data $\delta$, takes the form
\begin{equation}\label{eq:intpsi}
\IE^{\bpi^\delta}[\phi]
= 
Z'/Z, \;\;
\quad 
Z':=
\int_{U} \! \Psi(\bsy) \,\bpi_0(\dd \bsy)
\;.
\end{equation}
The task in computational Bayesian estimation is 
therefore to approximate the ratio $Z'/Z \in \IR$ in \eqref{eq:intpsi}. 
In the parametrization with respect to $\bsy\in U$, 
$Z$ and $Z'$ take the form of infinite-dimensional, 
iterated integrals with respect to the uniform 
prior $\bpi_0(\dd\bsy)$.
%
\subsection{Well-posedness and approximation}
\label{sec:WellPosAppr}
For the computational viability of 
Bayesian inversion the quantity $\IE^{\bpi^\delta}[\phi]$
should be stable under perturbations of the data $\delta$ 
and under changes in the forward problem stemming, for example,
from discretizations as considered in Section \ref{sec:Discr}.
 
Unlike deterministic inverse problems 
where the data-to-solution maps can be 
severely ill-posed, for $\Gamma > 0$ 
the expectations \eqref{eq:intpsi} are Lipschitz continuous 
with respect to the data $\delta$, 
{\em provided that the potential $\Phi_\Gamma$ 
     in \eqref{eq:DefPhiu} is locally 
     Lipschitz with respect to the data $\delta$} 
     in the following sense.
\begin{assumption}\label{Ass:LipPhi}
\cite[Assumption 4.2]{Stuart13}
Let $\tilde{X}\subseteq X$ and assume 
$\Phi_\Gamma \in C(\tilde{X}\times Y;\IR)$
is Lipschitz on bounded sets. 
Assume also that there exist 
functions $\calM_i: \IR_+\times \IR_+\to \IR_+$, $i=1,2$,
(depending on $\Gamma > 0$)
which are monotone, non-decreasing separately in each argument,
and with $\calM_2$ strictly positive, such that for all $u\in \tilde{X}$, 
and for all $\delta,\delta_1,\delta_2\in B_Y(0,r)$
\begin{equation}\label{eq:PhiBddbelow}
\Phi_{\Gamma}(u;\delta) \geq -\calM_1(r,\| u \|_X),
\end{equation}
and
\begin{equation}\label{eq:PhiLocLip}
|\Phi_\Gamma (u;\delta_1) - \Phi_\Gamma (u;\delta_2) | 
\leq 
\calM_2(r, \| u \|_X) \| \delta_1 - \delta_2 \|_Y 
\;.
\end{equation}
\end{assumption}
We remark that in the present context,
\eqref{eq:PhiLocLip} follows from \eqref{eq:DefPhiu};
for convenient reference, we include \eqref{eq:PhiLocLip} 
in Assumption \ref{Ass:LipPhi}.
Under Assumption \ref{Ass:LipPhi}, 
the expectation \eqref{eq:intpsi} depends Lipschitz 
continuously on $\delta$ (see, e.g., \cite[Sec. 4.1]{Stuart13} for a proof):
\begin{equation} \label{eq:LipData}
\forall \phi\in L^2(\bpi^{\delta_1},X;\IR)\cap L^2(\bpi^{\delta_2},X;\IR):
\quad 
| \bbE^{\bpi^{\delta_1}}[\phi] -  \bbE^{\bpi^{\delta_2}}[\phi]|
\leq 
C(\Gamma,r) \| \delta_1 - \delta_2 \|_Y\;.
\end{equation}

Ahead, we shall be interested in the impact of approximation
errors in the forward response of the system (e.g.~due to 
discretization and approximate numerical solution of system responses)
on the Bayesian predictions \eqref{eq:intpsi}.
To ensure continuity of the expectations
\eqref{eq:intpsi} w.r.t.~changes in the potential, 
we impose the following assumption.
\begin{assumption}\label{Ass:ApprPhi}
\cite[Assumption 4.6]{Stuart13}
Let $\tilde{X}\subseteq X$ and assume 
$\Phi_\Gamma \in C(\tilde{X};\IR)$
is Lipschitz on bounded sets. 
Assume also that there exist functions 
$\calM_i: \IR_+ \to \IR_+$, $i=1,2$, 
independent of the number $M$ of degrees of freedom
in the discretization of the forward problem,
where the functions $\calM_i$ 
are monotonically non-decreasing separately in each argument,
and with $\calM_2$ strictly positive,
such that for all $u\in \tilde{X}$, and all $\delta \in B_Y(0,r)$,
%
\begin{equation}\label{eq:PhiBddbelow2}
\Phi_{\Gamma}(u;\delta) \geq - \calM_1(\| u \|_X),
\end{equation}
and 
there is a positive, monotonically decreasing $\varphi(\cdot)$
such that $\varphi(M)\to 0$ as $M\to \infty$, 
monotonically and uniformly w.r.t.~$u\in \tilde{X}$ (resp. w.r.t.~$\bsy \in U$)
and such that
\begin{equation}\label{eq:PhiAppr}
|\Phi_\Gamma(u;\delta) - \Phi^{M}_\Gamma (u;\delta) | 
\leq 
\calM_2(\| u \|_X) \varphi(M)
\;.
\end{equation}
\end{assumption}
Denote by $\bpi^\delta_{M}$ the Bayesian posterior, 
for given data $\delta\in Y$,
with respect to the approximate potential $\Phi^{M}_\Gamma$
obtained from a Petrov-Galerkin discretization \eqref{eq:parmOpEqh_trun}.
\begin{proposition}\label{prop:ErrBayesExpec}
Suppose that
Assumption \ref{Ass:ApprPhi} holds, and 
assume that for $\tilde{X} \subseteq X$ and for some bounded 
set $B\subset X$ we have $\bpi_0(\tilde{X} \cap B)>0$ and 
$$
X\ni u \mapsto \exp(\calM_1(\| u\|_X)) (1 +\calM_2^2(\| u\|_X) ) \in L^1_{\bpi_0}(X;\IR) 
\;.
$$
Then there holds, 
for every QoI $\phi:X\to \IR$ such that
$\phi\in  L^2_{\bpi^{\delta}}(X;\IR)\cap L^2_{\bpi^{\delta}_M}(X;\IR)$ 
uniformly w.r.t.~$M$ and such that $Z>0$ in \eqref{eq:Z},
the consistency error bound
\begin{equation}\label{eq:BayAppr}
\|\bbE^{\bpi^{\delta}}[\phi] -  \bbE^{\bpi^{\delta}_M}[\phi]\|_\IR
\leq 
C(\Gamma,r) 
\varphi(M)
\;.
\end{equation}
\end{proposition}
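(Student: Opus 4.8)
The plan is to follow the stability argument of \cite[Sec.~4.1]{Stuart13}, reducing the bound on the difference of the two ratio estimators to a bound on the Hellinger distance between the exact and the approximate posteriors. Working at the level of \emph{square-root} densities is what allows the linear rate $\varphi(M)$ (rather than $\varphi(M)^{1/2}$), because the square tames the exponential weights against the integrability hypothesis. First I would write both expectations as ratios: with $\Theta(\bsy)=\exp(-\Phi_\Gamma(u;\delta))$ and $\Theta_M(\bsy)=\exp(-\Phi^M_\Gamma(u;\delta))$ (evaluated at $u=\langle u\rangle+\sum_j y_j\psi_j$), set $Z=\int_U\Theta\,\bpi_0(\dd\bsy)$ and $Z_M=\int_U\Theta_M\,\bpi_0(\dd\bsy)$, so that $\bbE^{\bpi^\delta}[\phi]=Z^{-1}\int_U\phi\,\Theta\,\bpi_0(\dd\bsy)$ and analogously for $\bpi^\delta_M$; the densities of $\bpi^\delta$ and $\bpi^\delta_M$ w.r.t.~$\bpi_0$ are $\Theta/Z$ and $\Theta_M/Z_M$.

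Next I would establish the key pointwise bound. Applying the mean value theorem to $t\mapsto e^{-t/2}$, together with the lower bound $\Phi_\Gamma,\Phi^M_\Gamma\ge-\calM_1(\|u\|_X)$ from \eqref{eq:PhiBddbelow2} and the approximation bound \eqref{eq:PhiAppr}, gives
$$
|\sqrt{\Theta}-\sqrt{\Theta_M}|\le\tfrac12\exp\bigl(\tfrac12\calM_1(\|u\|_X)\bigr)\,\calM_2(\|u\|_X)\,\varphi(M)\;.
$$
Squaring and integrating, the hypothesis $u\mapsto\exp(\calM_1)(1+\calM_2^2)\in L^1_{\bpi_0}$ yields $\int_U|\sqrt{\Theta}-\sqrt{\Theta_M}|^2\,\bpi_0(\dd\bsy)\le C\,\varphi(M)^2$, while integrating the analogous linear bound $|\Theta-\Theta_M|\le\exp(\calM_1)\calM_2\,\varphi(M)$ gives $|Z-Z_M|\le C\,\varphi(M)$.

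I would then show that $Z$ and $Z_M$ are bounded below uniformly in $M$. Restricting the integral defining $Z_M$ to $\tilde X\cap B$, on which $\|u\|_X\le\sup_{v\in B}\|v\|_X=:R_B$ and $\Phi_\Gamma$ is bounded above (being Lipschitz on bounded sets), and using $\Phi^M_\Gamma\le\Phi_\Gamma+\calM_2(R_B)\varphi(M)$, one obtains $Z_M\ge c\,\bpi_0(\tilde X\cap B)>0$ with $c$ independent of $M$; the hypothesis $Z>0$ is assumed. Combining the two displayed integral estimates with these lower bounds in the standard Hellinger computation (expanding $\sqrt{\Theta}/\sqrt{Z}-\sqrt{\Theta_M}/\sqrt{Z_M}$ into a difference-of-densities term and a difference-of-constants term and using $(\sqrt{Z_M}-\sqrt{Z})^2\le|Z-Z_M|^2/\min(Z,Z_M)$) yields $d_{\mathrm H}(\bpi^\delta,\bpi^\delta_M)\le C(\Gamma,r)\,\varphi(M)$.

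Finally I would convert the Hellinger bound into the asserted estimate via the elementary inequality $|\bbE^{\bpi^\delta}[\phi]-\bbE^{\bpi^\delta_M}[\phi]|\le2\bigl(\bbE^{\bpi^\delta}[|\phi|^2]+\bbE^{\bpi^\delta_M}[|\phi|^2]\bigr)^{1/2}\,d_{\mathrm H}(\bpi^\delta,\bpi^\delta_M)$, whose prefactor is finite and bounded uniformly in $M$ precisely because $\phi\in L^2_{\bpi^\delta}\cap L^2_{\bpi^\delta_M}$ uniformly w.r.t.~$M$. The main obstacle is the uniform positivity of the normalization constants $Z_M$: one must exploit $\bpi_0(\tilde X\cap B)>0$ and the boundedness of $\Phi_\Gamma$ on the bounded set $B$ to prevent $Z_M$ from degenerating as $M\to\infty$, which would otherwise destabilize the ratio; and the linear (rather than square-root) rate hinges on carrying out all estimates at the level of the square-root densities.
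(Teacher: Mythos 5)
The paper gives no proof of this proposition, deferring entirely to \cite[Thm. 4.8, Rem. 4.9]{Stuart13}, and your argument is a correct reconstruction of exactly that Hellinger-distance proof: the mean-value bound on the square-root densities $\sqrt{\Theta},\sqrt{\Theta_M}$, the uniform-in-$M$ lower bound on $Z_M$ obtained from $\bpi_0(\tilde{X}\cap B)>0$ and the boundedness of $\Phi_\Gamma$ on the bounded set $B$, and the conversion to expectations via the uniform $L^2$ bound on $\phi$. The only point to watch is that your pointwise step tacitly uses the lower bound \eqref{eq:PhiBddbelow2} for $\Phi^M_\Gamma$ as well as for $\Phi_\Gamma$; this is how the cited assumption is meant to be read (uniformly in $M$), and it holds trivially in the present setting since both potentials are nonnegative quadratic forms.
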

For a proof of Proposition \ref{prop:ErrBayesExpec}, 
we refer to \cite[Thm. 4.8, Rem. 4.9]{Stuart13}.

Below, we shall present concrete choices for the 
convergence rate function $\varphi(M)$ in estimate \eqref{eq:PhiAppr},
in terms of 
i) {\em ``dimension truncation''} of the 
   uncertainty parametrization \eqref{eq:uviapsi},
   i.e.~to a finite number of $s\geq 1$ terms in \eqref{eq:uviapsi}, 
   and 
ii) {\em discretization} of the dimensionally truncated problem
for particular classes of forward problems.
The verification of the consistency condition
\eqref{eq:PhiAppr} in either of these cases will be
based on the following observation.
\begin{proposition}\label{prop:PhiNCheck}
Assume we are given a sequence $\{ q_M \}_{M \geq 1}$
of approximations to the forward response 
$X\ni u\mapsto q(u) \in \bcX$
such that, with the parametrization \eqref{eq:uviapsi},
\begin{equation}\label{eq:qConsis}
\sup_{u\in \tilde{X}} \| (q-q_M)(\bsy) \|_{\bcX} \leq \varphi(M)
\end{equation}
with a consistency error bound $\varphi(M) \downarrow 0$ as 
in Assumption \ref{Ass:ApprPhi}.
Denote by $G^M$ the corresponding
(Galerkin) approximations of the parametric forward maps.
Then the approximate Bayesian potential is
\begin{equation}\label{eq:PhiN}
\Phi^M_{\Gamma}(u;\delta) 
= \frac{1}{2} (\delta - \cG^M(u))^\top \Gamma^{-1} (\delta - \cG^M(u))
: X\times Y \to \bbR \, ,
\end{equation}
where $\cG^M := \cO\circ G^M$, satisfies \eqref{eq:PhiAppr}.
\end{proposition}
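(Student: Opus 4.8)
The plan is to verify the consistency bound \eqref{eq:PhiAppr} by a direct computation that exploits the quadratic structure of the potential \eqref{eq:DefPhiu}, the \emph{linearity} of the observation operator $\cO$, and the forward consistency bound \eqref{eq:qConsis}. First I would subtract the two quadratic forms. Writing $a := \delta - \cG(u)$ and $a_M := \delta - \cG^M(u)$, so that $a - a_M = \cG^M(u) - \cG(u)$, and using the symmetry of $\Gamma^{-1}$, one has the elementary identity
$$
\Phi_\Gamma(u;\delta) - \Phi^M_\Gamma(u;\delta)
= \tfrac{1}{2}\bigl(a^\top\Gamma^{-1}a - a_M^\top\Gamma^{-1}a_M\bigr)
= \tfrac{1}{2}\,(a - a_M)^\top\Gamma^{-1}(a + a_M)\;.
$$
Since $\Gamma > 0$, the matrix $\Gamma^{-1}$ induces the inner product $x^\top\Gamma^{-1}y$ with associated norm $|\cdot|_\Gamma$, and the Cauchy--Schwarz inequality gives
$$
|\Phi_\Gamma(u;\delta) - \Phi^M_\Gamma(u;\delta)|
\leq \tfrac{1}{2}\,|a - a_M|_\Gamma\,|a + a_M|_\Gamma\;.
$$

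For the first factor I would use that $\cO \in \cL(\bcX;Y)$ is linear, so that $\cG(u) - \cG^M(u) = \cO\bigl(G(u) - G^M(u)\bigr) = \cO\bigl((q - q_M)(u)\bigr)$; combining boundedness of $\cO$ with the hypothesis \eqref{eq:qConsis} and converting from the Euclidean to the weighted norm via $|x|_\Gamma \le \lambda_{\min}(\Gamma)^{-1/2}|x|$ yields
$$
|a - a_M|_\Gamma
\leq \lambda_{\min}(\Gamma)^{-1/2}\,\|\cO\|_{\cL(\bcX;Y)}\,\varphi(M)\;.
$$
This is precisely the step at which the claimed rate $\varphi(M)$ enters. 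For the second factor I would show that $|a + a_M|_\Gamma = |2\delta - \cG(u) - \cG^M(u)|_\Gamma$ is bounded uniformly in $M$ and in $u\in\tilde{X}$: for $\delta\in B_Y(0,r)$ the data term contributes at most $2\lambda_{\min}(\Gamma)^{-1/2}r$, while the forward terms are controlled by the uniform bound \eqref{eq:ParDepy} on $\|q(u)\|_\cX = \|G(u)\|_\cX$ together with $\|G^M(u)\|_\cX \le \|G(u)\|_\cX + \varphi(M)$, the right-hand side being bounded uniformly in $M$ since $\varphi$ is monotonically decreasing by Assumption \ref{Ass:ApprPhi}. Collecting these estimates gives \eqref{eq:PhiAppr} with the same $\varphi(M)$ as in \eqref{eq:qConsis} and with $\calM_2$ taken to be the resulting constant $\tfrac{1}{2}\lambda_{\min}(\Gamma)^{-1/2}\|\cO\|_{\cL(\bcX;Y)}\,C(\Gamma,r)$, which is strictly positive and (being constant in $\|u\|_X$) trivially nondecreasing. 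Finally, the lower bound \eqref{eq:PhiBddbelow2} is immediate, since $\Phi^M_\Gamma(u;\delta) = \tfrac{1}{2}\,a_M^\top\Gamma^{-1}a_M \ge 0$, so one may take $\calM_1 \equiv 0$.

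The computation is essentially routine once the difference-of-quadratic-forms identity is in place; the only point requiring genuine care is the uniform-in-$M$ boundedness of the approximate response $G^M(u)$ needed to absorb the ``sum'' factor $|a + a_M|_\Gamma$ into the constant $\calM_2$. This does not follow from \eqref{eq:qConsis} for a single fixed $M$ in isolation, but becomes immediate once one invokes the monotonicity of $\varphi$ (so that $\varphi(M)$ is bounded uniformly in $M$) in combination with the uniform forward bound \eqref{eq:ParDepy}. I expect this to be the main obstacle, though a mild one.
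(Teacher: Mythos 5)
Your proposal is correct and follows essentially the same route as the paper: the difference of the two quadratic forms is factored as $\tfrac12 (a-a_M)^\top\Gamma^{-1}(a+a_M)$, Cauchy--Schwarz in the $\Gamma^{-1}$-weighted inner product is applied, the first factor is controlled by linearity and boundedness of $\cO$ together with \eqref{eq:qConsis}, and the second factor is absorbed into the constant. The paper's proof stops at exactly this factorization (leaving the uniform boundedness of the ``sum'' factor implicit), so your additional remarks on the uniform-in-$M$ bound via \eqref{eq:ParDepy} and on the trivial lower bound \eqref{eq:PhiBddbelow2} only make the argument more complete.
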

\begin{proof}
By definition \eqref{eq:DefPhiu} of the Bayesian potential,
for every $u\in \tilde{X}$ (i.e.~every $\bsy\in U$ defined
in \eqref{eq:DefSetU}) %
\begin{align*}
    &
    |\Phi_{\Gamma}(u;\delta) - \Phi_{\Gamma}^M(u;\delta)| 
    \\
    &=
    \frac{1}{2}
    \left|
    \big(\delta-(\cO\circ G)(u)\big)^\top \Gamma^{-1} \big(\delta - (\cO\circ G)(u)\big)
    -
    \big(\delta-(\cO\circ G^M)(u)\big)^\top \Gamma^{-1} \big(\delta - (\cO\circ G^N)(u)\big)
    \right| \\
    &\leq
    \frac{1}{2}
    \big\| \Gamma^{-1/2} \cO \big\|_{\bcX^*}
    \big\| (G-G^M)(u) \big\|_{\bcX}
    \big|2\delta - \cO\circ(G+G^M)(u)\big|_\Gamma 
    \\
    &=
    \frac{1}{2}
    \big\| \Gamma^{-1/2} \cO \big\|_{\bcX^*}
    \big\| (q-q_M)(\bsy) \big\|_{\bcX}
    \big|2\delta - \cO\circ(q+q_M)(\bsy)\big|_\Gamma
    \;.
\end{align*}
%
\end{proof}
Throughout the following, we will denote by $q_M$ the 
Petrov-Galerkin discretization \eqref{eq:parmOpEqh_trun} 
in Section \ref{sec:Discr}, 
with $M=M_h={\rm dim}(\bcX^h) = {\rm dim}(\bcY^h)$
degrees of freedom.
\section{Holomorphic parameter dependence}
\label{sec:Hol}
As indicated, a key role in the present paper is
played by holomorphy of countably parametric families of
operator equations and their solution families.
By this we mean that the parametric family of solutions
permits, with respect to each parameter $y_j$, a holomorphic
extension into the complex domain $\C$; for purposes of QMC
integration, in addition some uniform bounds on these
holomorphic extensions must be satisfied in order to 
prove approximation rates and QMC quadrature error bounds
which are independent of the number of parameters which are
``activated'' in the approximation, i.e.~in
the QMC quadrature process.
In \cite{HaSc11,CCS2}, the notion of 
{\em $(\bsb, p, \eps)$-holomorphy of parametric solutions}
has been introduced to this end. %
\emph{%
In the remainder of Section \ref{sec:Hol} and 
throughout the next Section \ref{sec:anadepsol}, all
spaces $X$, $Y$, $\bcX$ and $\bcY$ will be understood as 
Banach spaces over $\C$, without notationally indicating so.
}
\subsection{Holomorphic Families of Countably-Parametric Maps}
\label{sec:HolFam}
\begin{definition} \label{def:peanalytic} ($(\bsb,p,\eps)$-holomorphy)
Let $\eps > 0$ and $0<p<1$ be given. 
For a positive sequence 
$\bsb = (b_j)_{j\geq 1} \in \ell^p(\IN)$, we say that a parametric solution family 
$q(\bsy) : U\to \cX$ of \eqref{eq:main}
satisfies the {\em $(\bsb,p,\eps)$-holomorphy assumption} 
if and only if all of the following conditions hold:
\begin{enumerate}
\item 
The map $\bsy\mapsto q(\bsy)$ from $U$ to $\cX$, for each $\bsy\in U$,
is uniformly bounded with respect to the parameter sequence $\bsy$, 
i.e.~there is a bound $C_0 > 0$ such that
\be
\label{ubNewProblem}
\sup_{\bsy\in U}\|q(\bsy)\|_X \leq C_0\;.
\ee
\item 
There exists a positive sequence 
$\bsb = (b_j)_{j\geq1} \in \ell^p(\N)$ 
such that, 
for any sequence $\bsrho:=(\rho_j)_{j\geq1}$
of numbers $\rho_j > 1$ that satisfies 
\be
\label{eq:rho_b}
\sum_{j \geq 1} (\rho_j-1) b_j \leq \eps,
\ee
for sufficiently small $\eps > 0$, 
the parametric solution map $U\ni\bsy\mapsto q(\bsy)$ 
admits an extension
$\bsz \mapsto q(\bsz)$ to the complex domain
that is holomorphic with
respect to each variable $z_j$ 
in a cylindrical set of the form 
${O}_{\bsrho} := \bigotimes_{j\geq 1} {O}_{\rho_j}$ 
where, for every $j\geq 1$, 
$[-1,1] \subset O_{\rho_j}\subset \C$ 
with open sets  $O_{\rho_j}\subset\C$.

\item
For any poly-radius $\bsrho$ satisfying \eqref{eq:rho_b}, 
there is a second family 
$\tilde{O}_{\bsrho} := \bigotimes_{j\geq 1} \tilde{O}_{\rho_j}$
of open, cylindrical sets 
$$
[-1,1] \subset {O}_{\rho_j} \subset \tilde{O}_{\rho_j} \subset \C
$$
(strict inclusions), such that the
extension is bounded on $\overline{\tilde{O}_\bsrho}$
according to
\be
\sup_{\bsz\in\tilde{O}_\rho}\|q(\bsz)\|_X \leq C_\eps \, ,
\ee
where the bounds $C_\eps > 0$ depend on $\eps$, 
but are independent of $\bsrho$.
\end{enumerate}
\end{definition}
The notion of $(\bsb, p,\eps)$-holomorphy depends implicitly on the 
choice of sets $\tilde{O}_{\rho_j}$.
Depending on the approximation process in the parameter 
domain $U$ under consideration, a particular choice of the sets 
$ \tilde{O}_{\rho_j}$ has to be made in order to obtain sharp
convergence bounds under minimal holomorphy requirements.

Some
particular choices of sets $\tilde{O}_{\rho_j}$ are 
as follows.
For a real number $\kappa > 1$, 
we denote by 
$\calD_\kappa = \{ z\in \bbC | |z| \leq \kappa \}$
the closed disc of radius $\kappa>1$ in $\bbC$.
Choosing 
$\tilde{O}_{\rho_j} = \calD_{\rho_j}$
implies, for example, $p$-summability of partial sums
of finitely truncated Taylor expansions of $q(\bsy)$ 
about $\bsy = 0$, cf. \cite{CCS2} and the references 
there. This choice arises also in connection with 
Chebysev approximation, cf. \cite{HaSc11}.
The existence of 
analytic continuations to polydiscs
arises naturally in the context of 
{\em affine parametric operator equations}
which were considered in \cite{DKGNS13}.

A second choice is the 
{\em Bernstein ellipse} in the complex plane,
which is defined for some $\kappa > 1$ by
(cp. \cite[Sec. 1.13]{Davis})
\be
\tilde{O}_\kappa
=
\cE_\kappa 
:= 
\bigg\{\frac {w+w^{-1}}{2}: w\in \C\;, 1 \leq |w| \leq \kappa \bigg\},
\ee
which has foci at $\pm 1$ and semi axes of length 
$\frac{1}{2}(\kappa + 1/\kappa)$  
and 
$\frac{1}{2}(\kappa - 1/\kappa)$.
This class of holomorphy domains is well known to afford sharp 
results on the convergence of approximations by 
truncated Legendre series, cf.  \cite{Davis,CCS2,CDS2}. 
Importantly, this class of domains can be obtained by
{\em local} analytic continuation into discs of radii
larger than $\rho_j-1$ of the parametric mapping $q(\bsy)$
about a finite number of points $y_m\in [-1,1]$ 
(cp. \cite{CCS2}).

For the derivative bounds which arise in connection
with higher order Quasi-Monte Carlo error analysis
(see, e.g., \cite{DKGNS13,KSS12}) 
we use the following family of smaller continuation domains,
as in \cite{DLGCS14}:
for $\kappa>1$, consider by $\cT_\kappa$ the set of points
\begin{equation}\label{eq:TubeDef}
\cT_\kappa 
= \{ z\in \bbC| {\rm dist}(z,[-1,1]) \leq \kappa - 1\}
= \bigcup_{-1\leq y \leq 1} \{ z\in \IC | |z-y|\leq \kappa -1 \} 
\subset \bbC
\;.
\end{equation}
Then, once more, for a poly-radius $\bsrho$ satisfying
\eqref{eq:rho_b}, we denote by $\cT_\bsrho$ the 
corresponding cylindrical set 
$\cT_\bsrho := \bigotimes_{j\geq 1} \cT_{\rho_j}\subset \C^\N$.
%
\subsection{Examples of $(\bsb, p,\eps)$-holomorphic families}
\label{ssec:affparops}
We present several classes of examples of 
parametric equations $\cR(u;q)$ in \eqref{eq:main} for
which $(\bsb, p,\eps)$-holomorphy can be verified.
To this end, we denote by $\bcX$ and $\bcY$ 
separable and reflexive Banach spaces over
$\mathbb{R}$ (all results will hold with the obvious modifications also
for spaces over $\mathbb{C}$) with (topological) duals $\bcX'$ and
$\bcY'$, respectively. 
By $\cL(\bcX,\bcY')$, we denote the set of 
bounded linear operators $A:\bcX \to\bcY'$.
%
We next consider parametric forward models and
the regularity of their (countably-) parametric solution
families.
The following result, proved in \cite{CCS2},
shows that holomorphy of the solution map $\bsy\mapsto q(\bsy)$ 
follows from the holomorphy of the maps 
$A$ and $F$ in \eqref{eq:main}. 
Ahead, in Section \ref{sec:pdTh}, it will be seen
to imply holomorphy of the parametric Bayesian posterior
which, in turn, will be seen in Section \ref{sec:QMC} 
to yield dimension independent convergence rates for 
higher order QMC quadrature approximations of
$Z$ and $Z'$ in the Bayesian estimate \eqref{eq:intpsi}.
\begin{theorem}
\label{thm:AnalyticLaxMilgram}
For $\eps > 0$ and $0< p <1$, assume that there exist
a positive sequence $\bsb = (b_j)_{j\geq1} \in \ell^p(\N)$, 
and two constants
$0<r \leq R < \infty$ 
independent of $u\in \tilde{X}$ such that the following holds:
\begin{enumerate} 
\item
For any sequence $\bsrho := (\rho_j)_{j\geq1}$
of numbers strictly greater than $1$ that satisfies 
\eqref{eq:rho_b},
the parametric maps $\fa(\bsy;\cdot,\cdot)$ 
corresponding to the linear operator $A(\bsy)\in \cL(\bcX,\bcY')$
and $F$ admit extensions to complex parameters that are holomorphic 
with respect to every variable $\bsz$ on a set of the form 
$O_{\rho} = \bigotimes_{j\geq 1} O_{\rho_j}$,
where $O_{\rho_j}\subset \C$ is an open set 
containing $\tilde{O}_{\rho_j}$. 
\item
These extensions satisfy for all $\bsz\in O_{\rho}$ the
uniform continuity conditions
\be
\label{contFB}
\sup_{w\in \bcY\backslash\{0\}} \frac {|F(\bsz;w)|}{\|w\|_\bcY} \leq R, 
\quad  
\sup_{v\in \cX \backslash\{0\},w\in \cY\backslash \{0\}} 
\frac { |\fa(\bsz;v,w)| }{\|v\|_\bcX \|w\|_\cY}\leq R,
\ee
and the uniform inf-sup conditions: 
there exists $r>0$ such that
for every $\bsz\in O_{\rho}$ there hold the 
uniform inf-sup conditions
\be\label{infsupFB}
\inf_{v\in \cX \backslash \{0\}}\sup_{w\in \cY \backslash \{0\}} 
\frac{|\fa(\bsz;v,w)|}{\|v\|_\cX\|w\|_\cY} 
\geq r
\quad\mbox{and} \quad 
\inf_{w\in \cY\backslash \{0\}}\sup_{v\in \cX\backslash \{0\}}
\frac{|\fa(\bsz;v,w)|}{\|v\|_\cX\|w\|_\cY} \ge r
\;.
\ee
\end{enumerate}
Then, the nonlinear, parametric residual 
operator 
$\cR(u(\bsz);q) = A(u(\bsz);q) - F(u(\bsz);q)$ in \eqref{eq:main} (where $\bsz \in O_{\rho}$)
satisfies the $(\bsb, p,\eps)$-holomorphy assumptions 
with the same $p$ and $\eps$ and with the same sequence $\bsb$.
\end{theorem}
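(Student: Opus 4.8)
Since the hypotheses furnish holomorphic extensions of the data $A(\bsz)$ and $F(\bsz)$ together with the uniform continuity \eqref{contFB} and uniform inf-sup \eqref{infsupFB} bounds on the very cylindrical domains $O_{\bsrho}$ appearing in Definition \ref{def:peanalytic}, the whole argument reduces to \emph{transferring} these two properties from the data to the solution family $q$ via the solution operator. In the (affine-)linear case encoded by the bilinear form $\fa$, this operator is $q(\bsz)=A(\bsz)^{-1}F(\bsz)$; in the general case one writes $\cR(\bsz;q)=0$ and applies the analytic implicit function theorem to $q\mapsto\cR(\bsz;q)$, whose differential is controlled by the inf-sup condition. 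I would verify the three requirements of the $(\bsb,p,\eps)$-holomorphy assumption in turn, observing at the outset that the sequence $\bsb$, the exponent $p$ and the parameter $\eps$ are simply inherited from the hypotheses, since the admissible polyradii \eqref{eq:rho_b} are identical on the data side and on the solution side.

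First I would settle the uniform bounds, i.e.\ condition~1 of Definition \ref{def:peanalytic} for real $\bsy\in U$ and the boundedness on $\tilde{O}_{\bsrho}$ in condition~3. For real parameters the Babu\v{s}ka--Ne\v{c}as (generalized Lax--Milgram) theorem applies: the inf-sup lower bound $r$ in \eqref{infsupFB} together with the continuity upper bound $R$ for $F$ in \eqref{contFB} yields a unique $q(\bsy)\in\cX$ with $\|q(\bsy)\|_{\cX}\le R/r$ uniformly in $\bsy$, which gives $C_0=R/r$. The complex-valued version of the same theorem applies verbatim on $O_{\bsrho}$, since \eqref{contFB} and \eqref{infsupFB} are assumed to hold there: for each $\bsz\in O_{\bsrho}$ (hence in particular on $\overline{\tilde{O}_{\bsrho}}$, which the hypotheses place inside $O_{\bsrho}$) there is a unique solution with $\|q(\bsz)\|_{\cX}\le R/r=:C_\eps$, the bound being independent of the polyradius $\bsrho$, as required.

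The remaining and principal point is condition~2, the holomorphy of $\bsz\mapsto q(\bsz)$ in each coordinate $z_j$ on $O_{\bsrho}$. Here I would freeze all coordinates except $z_j$ and argue in a single complex variable. The map $z_j\mapsto A(\bsz)$ is holomorphic into $\cL(\cX,\cY')$ and $z_j\mapsto F(\bsz)$ is holomorphic into $\cY'$ by hypothesis~1; the crucial fact is that the uniform inf-sup bound \eqref{infsupFB} keeps $A(\bsz)$ inside the open set $\cL_{iso}(\cX,\cY')$ of boundedly invertible operators for \emph{all} $\bsz\in O_{\bsrho}$, with $\|A(\bsz)^{-1}\|\le 1/r$. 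Since inversion is holomorphic on that open set (locally a convergent Neumann series, using $A(\bsz')^{-1}=A(\bsz)^{-1}\bigl(I+(A(\bsz')-A(\bsz))A(\bsz)^{-1}\bigr)^{-1}$ for $z_j'$ near $z_j$), the composition $z_j\mapsto A(\bsz)^{-1}$ is holomorphic, and therefore so is the product $q(\bsz)=A(\bsz)^{-1}F(\bsz)$. Carrying this out for every $j$ gives separate holomorphy, which together with the uniform boundedness already established is exactly what Definition \ref{def:peanalytic} demands.

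The main obstacle is precisely this last step: one must guarantee that $A(\bsz)$ does not degenerate anywhere on the enlarged complex domain, so that the inverse remains holomorphic \emph{and} uniformly bounded. This is exactly the role of assuming \eqref{contFB}--\eqref{infsupFB} uniformly on $O_{\bsrho}$ rather than merely on $[-1,1]^{\bbN}$. For a genuinely nonlinear dependence $q\mapsto\cR(\bsz;q)$ the same mechanism operates through the analytic implicit function theorem, the hypothesis on the differential being that $D_q\cR\in\cL_{iso}(\cX,\cY')$ uniformly, i.e.\ the inf-sup condition for the linearization; the locally obtained solution is then continued along $O_{\bsrho}$ using the uniform bounds and the connectedness of the domain.
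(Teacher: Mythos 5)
Your proposal is correct and follows essentially the same route as the source the paper relies on: the paper does not reproduce a proof of Theorem \ref{thm:AnalyticLaxMilgram} but cites it from \cite{CCS2}, where the argument is exactly the one you give --- the uniform inf-sup and continuity bounds \eqref{contFB}--\eqref{infsupFB} on the complex cylinders keep $A(\bsz)$ in $\cL_{iso}(\bcX,\bcY')$ with $\|A(\bsz)^{-1}\|\le 1/r$, so $q(\bsz)=A(\bsz)^{-1}F(\bsz)$ inherits separate holomorphy from the data via the Neumann-series holomorphy of operator inversion, and the Banach--Ne\v{c}as--Babu\v{s}ka bound $\|q(\bsz)\|_{\bcX}\le R/r$ supplies the $\bsrho$-independent constants $C_0$ and $C_\eps$. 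Your treatment of the nonlinear case is only a sketch (analytic implicit function theorem plus continuation), but since the hypotheses of the theorem are phrased for the bilinear form $\fa$ and a linear $A(\bsy)\in\cL(\bcX,\bcY')$, the linear argument you give in full carries the essential content.
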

We refer to Section \ref{sec:modelproblems} ahead for concrete examples.
\section{Higher order QMC-PG method for Bayesian inverse problems}
\subsection{Regularity of parametric solutions}
\label{sec:anadepsol}
The dependence of the solution $q(\bsy)$ of
the parametric, variational problem \eqref{eq:main} on the parameter
vector $\bsy$ is studied in this subsection. 
Precise bounds on the growth of the partial derivatives 
of $q(\bsy)$ with respect to $\bsy$ will be given. 
These bounds will, as in \cite{KSS12}, 
imply dimension independent convergence rates for QMC quadratures.

In the following, let $\N_0^\N$ denote the set of sequences $\bsnu =
(\nu_j)_{j\geq 1}$ of nonnegative integers $\nu_j$, and let $|\bsnu| :=
\sum_{j\geq 1} \nu_j$. For $|\bsnu|<\infty$, 
we denote the partial
derivative of order $\bsnu$ of $q(\bsy)$ 
with respect to $\bsy$ by
\be\label{eq:dqdy}
\partial^\bsnu_\bsy q(\bsy)
\,:=\,
\frac{\partial^{|\bsnu|}}{\partial^{\nu_1}_{y_1}\partial^{\nu_2}_{y_2}\cdots}q(\bsy)
\;.
\ee
In \cite{CDS1,KSS12,KunothCS2011}, bounds on the derivatives \eqref{eq:dqdy} 
were obtained by an induction argument which strongly relied on 
affine-parametric dependence of the parametric operator.

Alternative bounds on $\| (\partial^{\bsnu}_\bsy q)(\bsy)\|_{\cX}$
based on complex variable methods from \cite{CDS2,ScSt11,SS12,SS13,CCS2},
which give rise to product weights
at least for a finite (possibly large, but in general operator-dependent)
``leading'' dimension of the parameter space are derived in 
\cite{DLGCS14}. These bounds are based on a 
\emph{holomorphic extension of the parametric integrand functions to the complex domain} 
(we add that not all PDE problems afford such 
 extensions and refer to \cite{HoSc12Wave} for an example).

For the particular case of 
linear, countably affine-parametric operator families 
$(\bsb, p,\eps)$-holomorphy as in Definition~\ref{def:peanalytic} 
holds on polydiscs $\calD_\bsrho$.

We remark that the smaller tubes 
$\cT_\bsrho$ of holomorphy in \eqref{eq:TubeDef} 
are, nevertheless, important: on the one hand, 
the ensuing result about $(\bsb, p,\eps)$-holomorphic, 
countably parametric functions belonging to 
SPOD-weighted spaces of integrands 
admissible for QMC quadrature by higher order
digital nets is stronger (being valid under weaker hypotheses).
On the other hand, in certain cases 
the possibility of covering the parameter intervals $[-1,1]$
by a finite number of small balls (whose union is contained
in a tube $\cT_{\rho_j}$ for radius sufficiently close to 
$\rho_j = 1$) is crucial to verify holomorphy of the parametric
solution families for certain nonlinear operator equations, 
see for example \cite[Sec. 5.2]{CCS2}. 
The following result is the main result from \cite{DLGCS14};
it shows that $(\bsb, p,\eps)$-holomorphy with respect to the
domains $\cT_{\rho_j}\subset \bbC$ for $j\geq 1$ 
implies derivative bounds for
higher order QMC integration with dimension-independent rates
of convergence.
\begin{theorem}\label{thm:DsiboundC}
For every mapping $q(\bsy):U\to \cX$ 
which is $(\bsb, p,\eps)$-holomorphic
on a polytube $\cT_{\bsrho}$ of
poly-radius $\bsrho = (\rho_j)_{j\geq 1}$ with 
$\rho_j > 1$ satisfying \eqref{eq:rho_b},
there exists a sequence $\bsbeta\in \ell^p(\bbN)$ 
(depending on the sequence $\bsb$ in \eqref{eq:rho_b})
and a partition $\bbN = E \cup E^c$ such that 
the parametric solution $q(\bsy)$ satisfies,
for every $\bsnu\in \bbN_0^\bbN$ with $|\bsnu|<\infty$,
the bound
\be \label{eq:HybBd}
\sup_{\bsy \in U}
\| (\partial^\nu_\bsy q)(\bsy) \|_\cX
\leq
C_\eps
\bsnu_E! 
\prod_{j\in E}\beta_j^{\nu_j}  
\times 
|\bsnu_{E^c}|! 
\prod_{j\in E^c}\beta_j^{\nu_j}
\;.
\ee
Here, $E = \{1,2,...,J\}$ for some $J = J(\bsb) < \infty$
depending on the sequence $\bsb$ in \eqref{eq:rho_b}, 
and for $\bsnu\in \bbN_0^\bbN$,
we set $\bsnu_E := \{ \nu_j : j\in E \}$.
The sequence 
$\bsbeta = (\beta_j)_{j\geq 1}$ satisfies
$\beta_j = 4\| \bsb \|_{\ell^1(\bbN)}/\eps$, 
i.e.~it
is in particular independent of $j$ for $1\leq j \leq J$.
Moreover, 
$\beta_j \lesssim b_j $ for $j>J$ with the implied
constant depending only on $J(\bsb)$ and on $\| \bsb \|_{\ell^1(\bbN)}$.
\end{theorem}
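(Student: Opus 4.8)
The plan is to derive the derivative bounds \eqref{eq:HybBd} from the Cauchy integral formula applied on the polytube $\cT_\bsrho$, exploiting the $(\bsb,p,\eps)$-holomorphy of $q$. The fundamental mechanism is that holomorphy in each variable $z_j$ on a domain containing a disc of radius $\rho_j-1$ about each point of $[-1,1]$, together with the uniform bound $\sup_{\bsz\in\tilde O_\bsrho}\|q(\bsz)\|_X\le C_\eps$, yields, via the one-dimensional Cauchy estimate, a factor $\nu_j!\,(\rho_j-1)^{-\nu_j}$ per coordinate. First I would fix $\bsnu$ with $|\bsnu|<\infty$, so that only finitely many coordinates are differentiated, and apply the iterated Cauchy formula coordinate by coordinate: for each active $j$, the contour is a circle of radius slightly below $\rho_j-1$ centred at a real point $y_j\in[-1,1]$, which stays inside $\cT_{\rho_j}$ by the very definition \eqref{eq:TubeDef} of the tube. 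This produces the preliminary bound $\|(\partial^\bsnu_\bsy q)(\bsy)\|_\cX\le C_\eps\prod_j \nu_j!\,(\rho_j-1)^{-\nu_j}$, uniformly in $\bsy\in U$.

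The \emph{central difficulty} lies in the optimisation over admissible poly-radii: the bound must hold simultaneously for every $\bsrho$ satisfying the constraint \eqref{eq:rho_b}, namely $\sum_{j\ge1}(\rho_j-1)b_j\le\eps$. For a fixed multi-index $\bsnu$ one wants to \emph{choose} $\bsrho$ to minimise $\prod_j(\rho_j-1)^{-\nu_j}$ subject to this budget constraint. The resolution splits the coordinate set according to whether $b_j$ is large or small relative to the budget: the threshold $J=J(\bsb)$ is defined so that for $j\le J$ one can afford a \emph{fixed}, $j$-independent enlargement $\rho_j-1=\eps/(4\|\bsb\|_{\ell^1})$ (giving the stated $\beta_j=4\|\bsb\|_{\ell^1}/\eps$), whereas for $j>J$ the tail summability forces $\rho_j-1$ to shrink, and a Lagrange-multiplier balancing of the budget against the differentiation orders yields $\rho_j-1\sim b_j^{-1}$ in the appropriate sense, producing $\beta_j\lesssim b_j$. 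This is exactly the split $\bbN=E\cup E^c$ with $E=\{1,\dots,J\}$.

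The second key step is the combinatorial conversion of $\prod_j\nu_j!$ into the \emph{mixed factorial structure} $\bsnu_E!\,|\bsnu_{E^c}|!$ appearing in \eqref{eq:HybBd}. On the leading block $E$ the weights are product-type, so $\prod_{j\in E}\nu_j!=\bsnu_E!$ is retained verbatim; on the tail $E^c$ one uses the elementary inequality $\prod_{j\in E^c}\nu_j!\le|\bsnu_{E^c}|!$ to collapse the product of factorials into a single factorial of the total tail order. This is precisely the passage from product weights to SPOD (smoothness-driven product-and-order-dependent) weights, and it is what makes the tail contribution amenable to higher-order QMC quadrature with dimension-independent rates. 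The $\ell^p$-membership of $\bsbeta$ then follows from $\beta_j\lesssim b_j$ on $E^c$ together with $\bsb\in\ell^p(\bbN)$, while the finitely many entries $\beta_j=4\|\bsb\|_{\ell^1}/\eps$ on $E$ contribute only a finite sum and so do not affect summability.

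I would close by verifying that the constant $C_\eps$ is independent of $\bsnu$ and of $\bsrho$: this is guaranteed by condition~(3) of Definition~\ref{def:peanalytic}, which supplies a uniform bound on $\|q(\bsz)\|_X$ over $\overline{\tilde O_\bsrho}$ depending only on $\eps$. The \emph{main obstacle} I anticipate is making the poly-radius optimisation fully rigorous when $|\bsnu|$ is large and the active coordinates extend into the tail $E^c$: one must check that the balanced choice of $\bsrho$ respects the global budget \eqref{eq:rho_b} for \emph{every} $\bsnu$ at once, rather than tuning $\bsrho$ per multi-index in a way that would spoil uniformity. This is handled by fixing the threshold $J$ independently of $\bsnu$ and then, for each $\bsnu$, distributing the remaining budget on the finitely many tail coordinates actually present in $\bsnu$; the details follow the argument of \cite{DLGCS14} and ultimately rest on the summability $\bsb\in\ell^p(\bbN)$.
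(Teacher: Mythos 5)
The paper itself contains no proof of Theorem~\ref{thm:DsiboundC}: it is imported verbatim as ``the main result from \cite{DLGCS14}''. Your reconstruction follows the route of that reference: iterated Cauchy estimates over circles of radius just below $\rho_j-1$ centred at points of $[-1,1]$ (hence inside $\cT_{\bsrho}$), giving $\sup_{\bsy\in U}\|(\partial^\bsnu_\bsy q)(\bsy)\|_\cX\le C_\eps\prod_j\nu_j!(\rho_j-1)^{-\nu_j}$ uniformly over admissible $\bsrho$, followed by a $\bsnu$-dependent choice of poly-radius within the budget $\sum_j(\rho_j-1)b_j\le\eps$, with the uniform enlargement $\rho_j-1=\eps/(4\|\bsb\|_{\ell^1(\bbN)})$ on the finite block $E$ (costing at most $\eps/4$ and yielding exactly the stated $\beta_j$) and a balanced choice on $E^c$. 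In outline this is correct and is essentially the cited argument.

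Two points need repair. First, the tail mechanism as you state it does not close: you cannot simultaneously have $(\rho_j-1)^{-1}\lesssim b_j$, i.e.\ $\rho_j-1\gtrsim b_j^{-1}$, on every active tail coordinate \emph{and} respect the budget, since each such coordinate then contributes $(\rho_j-1)b_j\gtrsim 1$; and the inequality $\prod_{j\in E^c}\nu_j!\le|\bsnu_{E^c}|!$ is not what produces the single factorial. The working choice is $\rho_j-1=c\,\nu_j/(|\bsnu_{E^c}|\,b_j)$ on the active tail coordinates (total cost exactly $c$, independent of how many are active), whereupon $\prod_{j\in E^c}\nu_j!\,(\rho_j-1)^{-\nu_j}=\prod_{j}\nu_j!\,(|\bsnu_{E^c}|/\nu_j)^{\nu_j}(b_j/c)^{\nu_j}\le|\bsnu_{E^c}|^{|\bsnu_{E^c}|}\prod_j(b_j/c)^{\nu_j}\le|\bsnu_{E^c}|!\prod_j(e\,b_j/c)^{\nu_j}$, using $\nu_j!\le\nu_j^{\nu_j}$ and $n^n\le e^n n!$; this is where $|\bsnu_{E^c}|!$ and $\beta_j\lesssim b_j$ come from. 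Second, your rationale for the finiteness of $J$ is inverted: the budget permits the uniform enlargement $\eps/(4\|\bsb\|_{\ell^1(\bbN)})$ for \emph{all} $j$, so nothing ``forces'' $\rho_j-1$ to shrink in the tail; $E$ is taken finite because the constant $\beta_j$ on $E$ does not decay and would ruin the $\ell^p$-membership of $\bsbeta$ if used on infinitely many coordinates, whereas on $E^c$ the bound $\beta_j\lesssim b_j$ is preferable precisely because $b_j$ has become small there. With these corrections the argument is the intended one.
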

The derivative bounds \eqref{eq:HybBd} were deduced from
$(\bsb,p,\eps)$-holomorphy of the forward map $U\ni \bsy \mapsto q(\bsy)$.
The same argument immediately implies corresponding 
derivative estimates for bounded, linear observation functionals
$\cO(\cdot)$ of the parametric solution.
With the $(\bsb,p,\eps)$-holomorphy
of the posterior densities $\Theta(\bsy)$ and $\Psi(\bsy)$ in 
Proposition~\ref{thm:pdTheta}, corresponding bounds for 
the parametric integrand functions in the Bayesian estimate
\eqref{eq:intpsi} follow analogously. 
We sum up these observations in the following.
\begin{corollary}\label{coro:DsiBoundThPsi}
Assume that the parametric solution map $U\ni \bsy \mapsto q(\bsy)$
of the forward problem is $(\bsb,p,\eps)$-holomorphic. 
Then, for every bounded, linear observation functional 
$\cO(\cdot): \bcX\to Y$, the 
countably-parametric uncertainty-to-observation map
$G(\bsy) := \cO(q(\bsy)): U \to Y$ 
satisfies the estimates \eqref{eq:HybBd}:
for given $0<\eps < 1$ and $\bsb\in \ell^p(\IN)$
there exist 
a constant $C_\eps > 0 $, 
a sequence $\bsbeta(\eps)\in \ell^p(\IN)$ and 
a partition $\IN = E\cup {E^c}$ depending only on $\eps$ and on $\bsb$
such that
for every $\bsnu\in \bbN_0^\bbN$ with $| \bsnu | < \infty$,
there holds
\begin{equation}\label{eq:FsPODBd}
\sup_{\bsy\in U}
\| \partial^{\bsnu}_{\bsy} G (\bsy) \|_Y 
\leq 
C_\eps
\bsnu_E!
\prod_{j\in E}\beta_j^{\nu_j}
\times
|\bsnu_{E^c}|!
\prod_{j\in {E^c}}\beta_j^{\nu_j}
\leq 
C_\eps |\bsnu|! \bsbeta^\bsnu
\;.
\end{equation}
\end{corollary}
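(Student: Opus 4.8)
The plan is to obtain \eqref{eq:FsPODBd} as a direct consequence of the $\bcX$-valued derivative bound \eqref{eq:HybBd} of Theorem~\ref{thm:DsiboundC}, using only that $\cO$ is a fixed bounded linear map. First I would record that differentiation in $\bsy$ commutes with $\cO$. Writing $\cO = (o_k)_{k=1}^K$ with each $o_k$ a continuous linear functional on $\bcX$, and using that the $(\bsb,p,\eps)$-holomorphy of $q$ guarantees the existence of the mixed partials $\partial^\bsnu_\bsy q(\bsy)\in\bcX$ for every $\bsnu\in\bbN_0^\bbN$ with $|\bsnu|<\infty$, one has componentwise $\partial^\bsnu_\bsy(o_k\circ q) = o_k(\partial^\bsnu_\bsy q)$, since a continuous linear functional passes through the difference quotients defining the derivatives. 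Collecting the $K$ components gives
\[
\partial^\bsnu_\bsy G(\bsy) = \cO\bigl(\partial^\bsnu_\bsy q(\bsy)\bigr) \in Y \;.
\]

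Next I would apply the operator-norm estimate $\|\cO(w)\|_Y \le \|\cO\|_{\cL(\bcX,Y)}\|w\|_\bcX$ with $w = \partial^\bsnu_\bsy q(\bsy)$ and take the supremum over $\bsy\in U$. Since $q$ is $(\bsb,p,\eps)$-holomorphic on the polytube $\cT_\bsrho$ by hypothesis, Theorem~\ref{thm:DsiboundC} furnishes a sequence $\bsbeta\in\ell^p(\bbN)$, a partition $\bbN = E\cup E^c$, and a constant, all depending only on $\eps$ and $\bsb$, for which \eqref{eq:HybBd} bounds $\sup_{\bsy\in U}\|\partial^\bsnu_\bsy q(\bsy)\|_\bcX$. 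Multiplying by the finite factor $\|\cO\|_{\cL(\bcX,Y)}$ and absorbing it into the constant $C_\eps$ yields the first inequality in \eqref{eq:FsPODBd}, with the \emph{same} $\bsbeta$, $E$ and $E^c$ as in Theorem~\ref{thm:DsiboundC}.

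For the second inequality in \eqref{eq:FsPODBd} I would collapse the SPOD-type bound to the product form. The two monomial factors combine as $\prod_{j\in E}\beta_j^{\nu_j}\prod_{j\in E^c}\beta_j^{\nu_j} = \bsbeta^\bsnu$, so it remains to check $\bsnu_E!\,|\bsnu_{E^c}|! \le |\bsnu|!$, i.e.~$\bigl(\prod_{j\in E}\nu_j!\bigr)\bigl(\sum_{j\in E^c}\nu_j\bigr)! \le |\bsnu|!$. This is exactly the assertion that the generalized multinomial coefficient $|\bsnu|!\,\big/\,\bigl(\prod_{j\in E}\nu_j!\cdot|\bsnu_{E^c}|!\bigr)$, which counts the number of ways to split $|\bsnu|$ objects into the blocks $\{\nu_j\}_{j\in E}$ together with one further block of size $|\bsnu_{E^c}|$, is a positive integer and hence at least $1$. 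I do not anticipate a genuine obstacle: the statement is a true corollary, the only points needing care being the interchange of $\cO$ with the partial derivatives — immediate from $\cO\in\cL(\bcX,Y)$ and the holomorphy of $q$ — and the elementary factorial inequality just described.
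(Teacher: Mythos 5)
Your argument is correct and matches the paper's intent: the paper gives no separate proof of Corollary~\ref{coro:DsiBoundThPsi}, stating only that the derivative bounds \eqref{eq:HybBd} for $q$ ``immediately imply'' the corresponding estimates for bounded linear functionals of $q$, which is precisely what you make explicit by commuting $\cO$ with $\partial^\bsnu_\bsy$ and absorbing $\|\cO\|_{\cL(\bcX,Y)}$ into $C_\eps$. The final factorial inequality $\bsnu_E!\,|\bsnu_{E^c}|!\le|\bsnu|!$ via integrality of the multinomial coefficient is also correct and completes the second inequality in \eqref{eq:FsPODBd}.
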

%
\subsection{Holomorphy of Bayesian posterior $\Theta(\bsy)$}
\label{sec:pdTh}
Our verification of existence and $(\bsb, p,\eps)$-holomorphy of 
analytic continuations $\Theta(\bsz)$ and $\Psi(\bsz)$
defined in \eqref{eq:PostDens} and \eqref{eq:psi}, 
respectively,
which appear in the parametric version \eqref{eq:intpsi} 
of Bayes' formula
will be based on $(\bsb, p,\eps)$-holomorphy
of the forward map $\bsy\mapsto q(\bsy)$ 
for the parametric posterior density $\Theta(\bsy)$
defined in \eqref{eq:post} and \eqref{eq:PostDens}.
Sufficient conditions for this were given in 
Theorem \ref{thm:AnalyticLaxMilgram} 
above.
\begin{proposition}\label{thm:pdTheta} \cite[Thm. 4.1]{SS13}
Consider the Bayesian inversion of the parametric
operator equation \eqref{eq:main} with uncertain input $u\in X$,
parametrized by the sequence 
$\bsy = (y_j)_{j\in \bbJ}\in U$.
Assume further that the corresponding
forward solution map $U\ni \bsy \mapsto q(\bsy)$ is
$(\bsb, p,\eps)$-holomorphic for some sequence 
$\bsb \in \ell^p(\IN)$ for some $0<p<1$ and some $\eps > 0$.

Then the Bayesian posterior densities 
$\Theta(\bsy)$ and $\Psi(\bsy)$
defined in \eqref{eq:PostDens} and \eqref{eq:psi}, respectively,
are, as a function of the parameter $\bsy$, likewise 
$(\bsb, p,\eps)$-holomorphic, 
{\em with the same $p$ and the same $\eps$}.

The modulus of the holomorphic extension of the
Bayesian posterior $\Theta(\bsy)$ over the polyellipse 
$\cE_\bsrho$ for a $(\bsb,\eps)$-admissible 
poly-radius $\bsrho$ as in \eqref{eq:rho_b} is bounded as
\begin{equation}\label{eq:ModBound}
B_\eps
=
\sup_{\bsz \in \partial \cE_\bsrho} |\Theta(\bsz)| 
\leq 
C\exp(\theta^2\| \Gamma^{-1}\|),
\end{equation}
with $\Gamma > 0$ denoting the positive definite 
covariance matrix in the additive,
Gaussian observation noise model \eqref{eq:DatDelta}.
The constants $\theta , C > 0$ in \eqref{eq:ModBound} 
depend on the condition number of the uncertainty-to-observation map 
$\cG(\cdot) = (\cO\circ G)(\cdot)$ 
but are independent of $\Gamma$ in \eqref{eq:DatDelta}.
The densities 
$\Theta(\bsy):U\to \IR$ and $\Psi(\bsy):U\to \IR$
in the Bayesian estimate \eqref{eq:intpsi} also satisfy 
estimates \eqref{eq:HybBd}, 
with norms taken in the respective spaces $\IR$ and $\IR$.
\end{proposition}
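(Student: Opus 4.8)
The plan is to establish the three claims of Proposition \ref{thm:pdTheta} in turn: first the $(\bsb,p,\eps)$-holomorphy of $\Theta$ and $\Psi$, then the modulus bound \eqref{eq:ModBound}, and finally the derivative estimates \eqref{eq:HybBd}. The starting point is the hypothesis that the forward map $U\ni\bsy\mapsto q(\bsy)$ is $(\bsb,p,\eps)$-holomorphic in the sense of Definition~\ref{def:peanalytic}. From this I would first transfer holomorphy to the uncertainty-to-observation map $\Xi(\bsy)=\cG(u(\bsy)) = \cO(q(\bsy))$: since $\cO\in\cL(\bcX;Y)$ is a bounded linear operator, composing it with the holomorphic $\bsz\mapsto q(\bsz)$ preserves holomorphy on the cylindrical continuation domains $\tilde{O}_\bsrho$ (here the polyellipses $\cE_\bsrho$), and the uniform bound $\sup_{\bsz\in\tilde O_\bsrho}\|q(\bsz)\|_X\le C_\eps$ yields a corresponding uniform bound on $\|\Xi(\bsz)\|_Y$.

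The core step is then to show that $\Theta(\bsy)=\exp(-\Phi_\Gamma(u(\bsy);\delta))$ inherits $(\bsb,p,\eps)$-holomorphy. The potential $\Phi_\Gamma(u;\delta)=\tfrac12(\delta-\cG(u))^\top\Gamma^{-1}(\delta-\cG(u))$ is a polynomial (quadratic) function of $\cG(u)=\Xi(\bsy)$, hence its complex extension $\bsz\mapsto\tfrac12(\delta-\Xi(\bsz))^\top\Gamma^{-1}(\delta-\Xi(\bsz))$ is holomorphic in each $z_j$ wherever $\Xi$ is. Composing with the entire function $\exp(-\cdot)$ preserves holomorphy, so $\Theta(\bsz)=\exp(-\Phi_\Gamma(u(\bsz);\delta))$ is holomorphic on $\cE_\bsrho$. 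The same argument applies to $\Psi(\bsy)=\Theta(\bsy)\phi(q(u(\bsy)))$, using in addition that $\phi$ is a (Fr\'echet-)differentiable, hence holomorphic, functional of the holomorphic $q(\bsz)$. The summability parameters $p$ and $\eps$ are \emph{unchanged} because we reuse the same poly-radii $\bsrho$ satisfying \eqref{eq:rho_b} and the same sequence $\bsb$; no new restriction on the admissible $\bsrho$ is introduced by the composition with fixed (holomorphic) maps $\cO$, the quadratic form, and $\exp$.

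For the modulus bound \eqref{eq:ModBound}, I would work on the boundary $\partial\cE_\bsrho$ and estimate $|\Theta(\bsz)|=|\exp(-\Phi_\Gamma(u(\bsz);\delta))|=\exp(-\operatorname{Re}\Phi_\Gamma(u(\bsz);\delta))$. Expanding the quadratic form and using $\|\Gamma^{-1}\|$ together with the uniform bound $\sup_{\bsz\in\cE_\bsrho}\|\Xi(\bsz)\|_Y\le C_\eps$ and $|\delta|\le$ const, one obtains $|\operatorname{Re}\Phi_\Gamma|\le\theta^2\|\Gamma^{-1}\|$ for a constant $\theta$ controlled by the norm/condition number of $\cG$, which gives the stated form $C\exp(\theta^2\|\Gamma^{-1}\|)$ with $\theta,C$ independent of $\Gamma$. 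The final claim, that $\Theta$ and $\Psi$ satisfy the SPOD-type derivative bounds \eqref{eq:HybBd}, then follows by applying Theorem~\ref{thm:DsiboundC} directly to the $(\bsb,p,\eps)$-holomorphic scalar maps $\Theta,\Psi:U\to\IR$ just constructed, since that theorem converts $(\bsb,p,\eps)$-holomorphy on the tubes (a fortiori on the polyellipses) into exactly bounds of the form \eqref{eq:HybBd}.

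The main obstacle I anticipate is controlling the real part of $\Phi_\Gamma$ on the \emph{complex} continuation domain: on $\cE_\bsrho$ the extended observation $\Xi(\bsz)$ is complex-valued, so $\operatorname{Re}\Phi_\Gamma$ is no longer manifestly nonnegative as it is for real $\bsy$, and one must verify that the uniform holomorphy bound $C_\eps$ is small enough (equivalently, that $\bsrho$ is close enough to $\bsone$) that $\operatorname{Re}\Phi_\Gamma$ stays bounded below and the exponential does not blow up. This is precisely the reason the admissibility constraint \eqref{eq:rho_b} with small $\eps$ is needed, and it is where care is required to keep the constants $\theta,C$ independent of $\Gamma$ while tracking the dependence on the condition number of $\cG$. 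Once this boundary estimate is secured, the remaining steps are routine compositions of holomorphic maps and a direct appeal to Theorem~\ref{thm:DsiboundC}.
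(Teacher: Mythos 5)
Your outline is correct and follows essentially the argument the paper relies on: the paper gives no proof of Proposition~\ref{thm:pdTheta} itself but defers to \cite[Thm.~4.1]{SS13}, whose proof is exactly your composition-of-holomorphic-maps chain (the bounded linear $\cO$, the quadratic potential extended bilinearly to $\IC^K$, and the entire function $\exp(-\cdot)$) combined with the uniform bound $\sup_{\bsz}\|q(\bsz)\|_{\bcX}\le C_\eps$ on the continuation domain, which gives $|\Theta(\bsz)|=\exp(-\operatorname{Re}\Phi_\Gamma)\le \exp(|\Phi_\Gamma|)\le C\exp(\theta^2\|\Gamma^{-1}\|)$ with $\theta$ controlled by $|\delta|$ and $\|\cO\|$ (no smallness of $C_\eps$ is actually needed here, only its finiteness), followed by a direct appeal to Theorem~\ref{thm:DsiboundC} for \eqref{eq:HybBd}. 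The one imprecision is your justification for the factor $\phi(q(\bsz))$ in $\Psi$: real Fr\'echet differentiability of $\phi$ does not by itself yield a holomorphic extension, and you should instead invoke that the paper takes $\phi\in\cL(\bcX,\IR)=\bcX'$ (cf.~Remark~\ref{rmk:Duality}), whose complexification is trivially holomorphic, so that $\Psi=\Theta\cdot\phi(q)$ is a product of holomorphic, uniformly bounded maps.
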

\subsection{Higher order quasi-Monte Carlo integration}
\label{sec:QMC}
In order to prove error bounds using QMC quadrature, 
we require the integrand to be smooth. 
A result on the smoothness of the $(\bsb,p,\eps)$-holomorphic 
solution of families of nonlinear parametric operator equations
with $(\bsb,p,\eps)$-holomorphic operators has been shown in \cite{DLGCS14} and 
is restated above (Theorem \ref{thm:DsiboundC}). 
For our purposes it is important to obtain error bounds which are independent 
of the dimension $s$, where $s$ is the {\em truncation dimension},
i.e.~we consider \eqref{eq:mainstrunc}
and its (Petrov-)Galerkin discretization \eqref{eq:parmOpEqh_trun}.
This means that we truncate the infinite sum in \eqref{eq:uviapsi} 
to a finite number $s \ge 1$ of terms and then estimate the resulting $s$ 
dimensional integral to approximate the mathematical expectation 
of the random solutions in \eqref{eq:Z} and \eqref{eq:intpsi}.
%

For a real-valued integrand function $g \in C^0([0,1]^s;\IR)$,
we consider the $s$-variate integration problem
\begin{equation}\label{eq:IsF}
 I_s(g) \,:=\,
 \int_{[0,1]^s} g(\bsy) \,\rd\bsy \;.
\end{equation}
We approximate this integral by an equal weight QMC quadrature rule
\begin{equation}\label{eq:QNs}
  Q_{N,s}(g) \,:=\,
  \frac{1}{N} \sum_{n=0}^{N-1} g(\bsy_n)\;,
\end{equation}
where the quadrature points $\bsy_0,\ldots,\bsy_{N-1} \in [0,1]^s$ are judiciously chosen. 
In the following we restate the necessary definitions 
and results from \cite{DKGNS13}.
\begin{definition} \label{def_F_norm}
Let nonnegative integers $\alpha, s\in\bbN$, 
and real numbers $1\le q \le \infty$ and $1\le r \le \infty$ be given. 
Let $\bsgamma = (\gamma_\setu)_{\setu\subset\bbN}$ 
be a collection of nonnegative real numbers called \emph{weights}. 
For every $s\in \IN$, assume that the integrand function
$g: [0,1]^s \to \mathbb{R}$ has partial derivatives of orders up to 
$\alpha$ with respect to each variable. 
Define $0/0 := 0$ and $a/0 := \infty$ for $a > 0$.
The smoothness of the integrand function $g$ in \eqref{eq:IsF} 
is quantified by the unanchored Sobolev norm
\begin{equation}\label{eq:defFabs}
\begin{array}{rl}
\|g\|_{s,\alpha,\bsgamma,q,r}
& \displaystyle
:=
 \Bigg( \sum_{\setu\subseteq\{1:s\}} \Bigg( \gamma_\setu^{-q}
 \sum_{\setv\subseteq\setu} \sum_{\bstau_{\setu\setminus\setv} \in \{1:\alpha\}^{|\setu\setminus\setv|}} \\
&\qquad\qquad\quad 
 \displaystyle
 \int_{[0,1]^{|\setv|}} \bigg|\int_{[0,1]^{s-|\setv|}} \!
 (\partial^{(\bsalpha_\setv,\bstau_{\setu\setminus\setv},\bszero)}_\bsy g)(\bsy) \,
\rd \bsy_{\{1:s\} \setminus\setv}
 \bigg|^q \rd \bsy_\setv \Bigg)^{r/q} \Bigg)^{1/r},
\end{array}
\end{equation}
with the obvious modifications if $q$ or $r$ is infinite. Here $\{1:s\}$
is a shorthand notation for the set $\{1,2,\ldots,s\}$, and
$(\bsalpha_\setv,\bstau_{\setu\setminus\setv},\bszero)$ denotes a sequence
$\bsnu$ with $\nu_j = \alpha$ for $j\in\setv$, $\nu_j = \tau_j$ for
$j\in\setu\setminus\setv$, and $\nu_j = 0$ for $j\notin\setu$.
Let $\calW_{s,\alpha,\bsgamma,q,r}$ denote the Banach 
space of all such functions $g$ with finite norm.
\end{definition}

Note that if $\gamma_\setu = 0$ for some set $\setu$ then the corresponding projection term $$ \sum_{\setv\subseteq\setu} \sum_{\bstau_{\setu\setminus\setv} \in \{1:\alpha\}^{|\setu\setminus\setv|}}
       \int_{[0,1]^{s-|\setv|}}
         (\partial^{(\bsalpha_\setv,\bstau_{\setu\setminus\setv},\bszero)}_\bsy g)(\bsy) \,\rd \bsy_{\{1:s\} 
        \setminus\setv} $$ 
has to be $0$ for all $g \in \calW_{s, \alpha, \bsgamma, q, r}$.
The next theorem, from  \cite[Thm.~3.5]{DKGNS13},
states an upper bound on the worst-case integration error 
in $\calW_{s, \alpha,\bsgamma, q, r}$ using a QMC rule based on a digital net.
\begin{proposition} \label{thm:wce}
Let nonnegative integers $\alpha, s\in\bbN$ with $\alpha>1$, and real numbers $1\le q\le \infty$ and $1\le r\le \infty$ be given. Let $\bsgamma = (\gamma_\setu)_{\setu\subset\bbN}$ be a collection of
weights. Let $r'\ge 1$ be the H\"older conjugate of $r$, i.e.~$1/r + 1/r' = 1$. Let $b$ be
prime, $m\in\bbN$, and let $\calS=\{\bsy_n\}_{n=0}^{b^m-1}$ denote a
digital net with generating matrices 
$C_1,\ldots,C_s\in\bbZ_b^{\alpha m\times m}$. Then 
\[
  \sup_{\|g\|_{s,\alpha,\bsgamma,q,r} \le 1}
  \left| \frac{1}{b^m} \sum_{n=0}^{b^m-1} g(\bsy_n) - \int_{[0,1]^s} g(\bsy) \,\mathrm{d} \bsy \right|
  \,\le\, e_{s,\alpha,\bsgamma,r'}(\calS)\;,
\]
with
\begin{align} \label{def-B}
  e_{s,\alpha,\bsgamma,r'}(\calS)
  \,:=\, \Bigg(\sum_{\emptyset \neq \setu \subseteq \{1:s\}}
  \bigg(C_{\alpha,b}^{|\setu|}\, \gamma_\setu \sum_{\bsk_\setu \in \setD_\setu^*}
  b^{-\mu_{\alpha}(\bsk_\setu)} \bigg)^{r'} \Bigg)^{1/r'}\;.
      \end{align}
Here $\setD_\setu^*$ is the ``dual net without $0$ components'' projected
to the components in $\setu$.
Moreover, we have $\mu_{\alpha}(\bsk_\setu) = \sum_{j\in\setu}
\mu_\alpha(k_j)$ with
\begin{equation} \label{def-mu-k}
  \mu_\alpha(k)
  \,:=\,
  \begin{cases}
  0 & \mbox{\,if } k = 0, \\
  a_1 + \cdots + a_{\min(\alpha,\rho)} & 
  \begin{aligned}
   \mbox{if }
  k &= \kappa_1 b^{a_1-1} + \cdots + \kappa_\rho b^{a_\rho-1} \mbox{ with} \\
    &\kappa_i\in \{1,\ldots,b-1\} \mbox{ and } a_1>\cdots>a_\rho>0,
  \end{aligned}
  \end{cases}
\end{equation}
and
\begin{align}\label{eq:Cab}
 &C_{\alpha,b}
 \,:=\,
 \max\left(\frac{2}{(2\sin\frac{\pi}{b})^{\alpha}},\max_{1\le z\le\alpha-1}
 \frac{1}{(2\sin\frac{\pi}{b})^z}\right)
 \nonumber
 \\
 &\qquad\qquad\qquad\times
 \left(1+\frac{1}{b}+\frac{1}{b(b+1)}\right)^{\alpha-2}
 \left(3 + \frac{2}{b} + \frac{2b+1}{b-1} \right)\;.
\end{align}
\end{proposition}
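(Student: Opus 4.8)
The plan is to follow the standard route for worst-case error bounds of higher-order digital nets via Walsh analysis, as developed in \cite{D08,D09,DiPi10,DKGNS13}. First I would expand the integrand in its $b$-adic Walsh series $g(\bsy) = \sum_{\bsk \in \bbN_0^s} \what{g}(\bsk)\,\wal_{\bsk}(\bsy)$, where $\wal_{\bsk}$ denotes the $s$-variate Walsh function of index $\bsk$ and $\what{g}(\bsk) = \int_{[0,1]^s} g(\bsy)\,\overline{\wal_{\bsk}(\bsy)}\,\rd\bsy$ its Walsh coefficient (smoothness of $g$ guarantees absolute convergence, so that series and quadrature sum may be interchanged freely). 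Since $\int_{[0,1]^s} g(\bsy)\,\rd\bsy = \what{g}(\bszero)$, the quadrature error reduces, via the defining character property of a digital net (the net-average of $\wal_{\bsk}$ equals $1$ when $\bsk$ lies in the dual net $\setD$ and $0$ otherwise), to a sum of Walsh coefficients over the nonzero dual net:
\[
 Q_{N,s}(g) - I_s(g) = \sum_{\bszero \neq \bsk \in \setD} \what{g}(\bsk) \;.
\]
Taking absolute values and grouping indices by their support $\setu := \{ j : k_j \neq 0\}$ rewrites the right-hand side as $\sum_{\emptyset \neq \setu \subseteq \{1:s\}} \sum_{\bsk_\setu \in \setD_\setu^*} |\what{g}((\bsk_\setu,\bszero))|$, which already exhibits the projected dual nets $\setD_\setu^*$ appearing in \eqref{def-B}.

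The core step, and the main obstacle, is the per-coordinate decay estimate for the Walsh coefficients of a function with bounded mixed derivatives up to order $\alpha$. Here I would invoke (or reprove) Dick's Walsh-coefficient bound: integrating $\what{g}((\bsk_\setu,\bszero))$ repeatedly by parts against the $b$-adic antiderivatives of the one-dimensional Walsh functions shows that each active coordinate $j \in \setu$ contributes a factor $C_{\alpha,b}\, b^{-\mu_\alpha(k_j)}$ together with a derivative of order $\tau_j = \min(\alpha,\rho_j)\in\{1{:}\alpha\}$, where $\rho_j$ is the number of nonzero $b$-adic digits of $k_j$. Coordinates with $\rho_j \ge \alpha$ force the maximal order $\alpha$ (these form the set $\setv$), while the remaining active coordinates contribute the intermediate orders encoded by $\bstau_{\setu\setminus\setv}$; this is precisely the digit-counting behind the definition of $\mu_\alpha$ in \eqref{def-mu-k}. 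The estimate is delicate because the antiderivative of a Walsh function is only piecewise polynomial, so one must track exactly how many derivatives can be extracted before the decay saturates at order $\alpha$; accumulating the factor $C_{\alpha,b}$ over the $|\setu|$ active coordinates produces the prefactor $C_{\alpha,b}^{|\setu|}$, and the explicit value \eqref{eq:Cab} emerges from sharpening the one-dimensional bound.

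Finally I would combine the coefficient bounds with the weighted norm \eqref{eq:defFabs}. Regrouping the dual-net indices $\bsk_\setu$ according to the subset $\setv\subseteq\setu$ of coordinates whose digit count forces order $\alpha$ and the intermediate orders $\bstau_{\setu\setminus\setv}$ on the other active coordinates, the coefficient estimate bounds $\sum_{\bsk_\setu\in\setD_\setu^*}|\what{g}((\bsk_\setu,\bszero))|$ by $C_{\alpha,b}^{|\setu|}\big(\sum_{\bsk_\setu\in\setD_\setu^*} b^{-\mu_\alpha(\bsk_\setu)}\big)$ times the mixed-derivative quantity $(\partial^{(\bsalpha_\setv,\bstau_{\setu\setminus\setv},\bszero)}_\bsy g)$ entering the norm. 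Two applications of H\"older's inequality — with exponents $q,q'$ within the integrals over the active variables $\bsy_\setv$ (the dual exponent of the Walsh kernel being absorbed into $C_{\alpha,b}$) and with the conjugate pair $r,r'$ across the subsets $\setu$ — then separate the estimate into the product $\|g\|_{s,\alpha,\bsgamma,q,r}\cdot e_{s,\alpha,\bsgamma,r'}(\calS)$. Here the factor $\gamma_\setu^{-1}$ carried by the support-$\setu$ term of the norm pairs with the $\gamma_\setu$ inside $e_{s,\alpha,\bsgamma,r'}(\calS)$ and cancels. Taking the supremum over all $g$ with $\|g\|_{s,\alpha,\bsgamma,q,r}\le 1$ then yields the claimed worst-case bound.
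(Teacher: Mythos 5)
Your outline is correct and follows exactly the argument that the paper relies on: the paper does not prove Proposition~\ref{thm:wce} itself but imports it from \cite[Thm.~3.5]{DKGNS13}, whose proof proceeds precisely via the Walsh expansion, the dual-net character property reducing the error to $\sum_{\bszero\neq\bsk\in\setD}\what{g}(\bsk)$, Dick's decay bound $C_{\alpha,b}^{|\setu|}b^{-\mu_\alpha(\bsk_\setu)}$ on the Walsh coefficients of functions with mixed derivatives up to order $\alpha$, and the two H\"older applications (in $q,q'$ over the active variables and in $r,r'$ over the subsets $\setu$) that pair $\gamma_\setu^{-1}$ from the norm with $\gamma_\setu$ in $e_{s,\alpha,\bsgamma,r'}(\calS)$. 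You correctly identify the Walsh-coefficient bound as the one genuinely technical ingredient; invoking it from \cite{D08,DKGNS13} is consistent with the level of detail the paper itself provides.
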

We are interested in the case where the integrand 
$g(\bsy)$ is a composition of a continuous, linear functional $\Oc(\cdot) \in \bcX'$
with the (Petrov-)Galerkin approximation $q^h_s(2 \bsy-\bsone)$ 
of the dimension-truncated, parametric and
$(\bsb,p,\eps)$-holomorphic, operator equation \eqref{eq:NonOpEqn}.  
For every $s\in N$,
the dimension-truncated integrand functions 
$g(\bsy) := (\Oc \circ q_s)(\bsy_{\{1:s\}})$ are 
$(\bsb,p,\eps)$-holomorphic {\em uniformly w.r.t. $s\in \IN$}
(see Section \ref{sec:dimtrunc}).
It follows from Theorem~\ref{thm:DsiboundC} that they 
satisfy the derivative estimates \eqref{eq:HybBd}
uniformly w.r.t. $s\in \IN$.
In \cite[Sec.~3]{DKGNS13} and \cite[Prop. 4.1]{DLGCS14} 
we showed the following result on 
convergence rates of QMC quadratures
based on higher order digital nets for 
functions $g(\bsy)$ which satisfy \eqref{eq:HybBd}. 
\begin{proposition}\label{prop:main1}
Let $s\ge 1$ and $N = b^m$ for $m\ge 1$ be integers and $b$ be prime. 
Let $\bsbeta = (\beta_j)_{j\ge 1}$ be a sequence of positive numbers, 
and denote by $\bsbeta_s = (\beta_j)_{1\le j \le s}$ its truncation after $s$ terms. 
Assume that
\begin{equation} \label{p-sum}
  \exists\, 0<p\le 1 : \quad \sum_{j=1}^\infty \beta_j^p < \infty\;.
\end{equation}
Define, for $0<p\leq 1$ as in \eqref{p-sum},
\begin{equation} \label{alpha}
  \alpha \,:=\, \lfloor 1/p \rfloor +1 \;.
\end{equation}
Consider integrand functions $g(\bsy)$ 
whose mixed partial derivatives 
of order $\alpha$ satisfy
\begin{equation} \label{eq:like-norm}
\forall\, \bsy\in U \;\forall s\in \IN \;
\forall\, \bsnu \in \{0, 1, \ldots, \alpha\}^s:
\quad
 | (\partial^{\bsnu}_\bsy g)(\bsy)| 
\,\le\, 
c(g) \bsnu_E! 
\prod_{j\in E}\beta_j^{\nu_j}  
\times 
|\bsnu_{E^c}|! 
\prod_{j\in {E^c}}\beta_j^{\nu_j}
\end{equation}
for some fixed integer $J\in \mathbb{N}$ 
where $E = \{1, 2, \ldots, J\}$ and ${E^c} = \mathbb{N} \setminus E$,
and where $c(g)>0$ is independent of $\bsy$, $s$ and of $\bsnu$.
Then, for every $N\in \N$, one can construct 
an interlaced polynomial lattice rule of
order $\alpha$ with $N$ points using a fast
component-by-component algorithm, using
$\calO(\alpha \left(\min\{s, J\} + \alpha (s-J)_+ \right) N \log N)$ 
operations, plus $\calO(\alpha^2 (s-J)_+^2 N)$ update cost, plus 
$\calO(N + \alpha (s- J)_+ N)$ memory cost, where $(w)_+ = \max\{0, w\}$, 
such that there holds the error bound
\begin{equation}\label{qmc_upper_bound}
\forall s,N \in \N:\quad 
  |I_s(g) - Q_{N,s}(g)|
  \,\le\, C_{\alpha,\bsbeta,b,p}\, N^{-1/p} \;,
\end{equation}
where $C_{\alpha,\bsbeta,b,p} < \infty$ is a constant independent of $s$
and $N$.
\end{proposition}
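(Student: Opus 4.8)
The plan is to realize the integrand $g$ as an element of the weighted, unanchored Sobolev space $\calW_{s,\alpha,\bsgamma,q,r}$ of Definition~\ref{def_F_norm} for a carefully chosen family of SPOD weights $\bsgamma=(\gamma_\setu)_{\setu\subset\bbN}$, to bound its norm \emph{uniformly in $s$} by means of the hybrid derivative estimate \eqref{eq:like-norm}, and then to combine the abstract worst-case error bound of Proposition~\ref{thm:wce} with a fast component-by-component (CBC) construction of the generating matrices of an interlaced polynomial lattice rule. The overall error then factorizes as $\|g\|_{s,\alpha,\bsgamma,q,r}\cdot e_{s,\alpha,\bsgamma,r'}(\calS)$, and it remains to make both factors $s$-independent while extracting the rate $N^{-1/p}$.

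First I would fix $\alpha=\lfloor 1/p\rfloor+1$ as in \eqref{alpha} and select the integrability exponents $q,r$ (equivalently $r'$ via $1/r+1/r'=1$) so that the subset sum in \eqref{def-B} converges; typically $q=\infty$ is convenient. The crucial step is the choice of weights, tailored to absorb the factor $\bsnu_E!\prod_{j\in E}\beta_j^{\nu_j}\times|\bsnu_{E^c}|!\prod_{j\in E^c}\beta_j^{\nu_j}$ in \eqref{eq:like-norm}. Since on the finite leading block $E=\{1,\dots,J\}$ one has $\bsnu_E!=\prod_{j\in E}\nu_j!$, the bound there genuinely factorizes across coordinates, whereas on the tail $E^c$ the factor $|\bsnu_{E^c}|!=(\sum_{j\in E^c}\nu_j)!$ does not. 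Accordingly I would take the \emph{hybrid} weights
\[
\gamma_\setu
=
\Bigl(\prod_{j\in \setu\cap E} c_j\Bigr)
\sum_{\bsnu_{\setu\cap E^c}\in\{1:\alpha\}^{|\setu\cap E^c|}}
|\bsnu_{\setu\cap E^c}|!
\prod_{j\in\setu\cap E^c}(2\beta_j)^{\nu_j}\,,
\]
where the product factor $c_j$ absorbs $\nu_j!\,\beta_j^{\nu_j}$ (maximized over $1\le\nu_j\le\alpha$): thus the weights are of product type on $E$ and of order-dependent SPOD type on $E^c$. Inserting \eqref{eq:like-norm} into Definition~\ref{def_F_norm} and carrying out the bookkeeping of the sums over $\setv\subseteq\setu$ and $\bstau_{\setu\setminus\setv}\in\{1:\alpha\}^{|\setu\setminus\setv|}$ against these matching factorial and product factors, one obtains $\|g\|_{s,\alpha,\bsgamma,q,r}\le C$ with $C$ independent of $s$.

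Next I would apply Proposition~\ref{thm:wce} to bound the QMC error by $\|g\|_{s,\alpha,\bsgamma,q,r}\,e_{s,\alpha,\bsgamma,r'}(\calS)$, and then exhibit, via the fast CBC algorithm for interlaced polynomial lattice rules (as developed in \cite{DKGNS13,GaCS14}), generating matrices $C_1,\dots,C_s$ for which $e_{s,\alpha,\bsgamma,r'}(\calS)\le C\,N^{-1/p}$ with $C$ independent of $s$. For this I would invoke the standard CBC quality argument: one shows inductively that the greedily chosen component at each coordinate keeps the worst-case error below its average over admissible choices, and controls that average using the $p$-summability \eqref{p-sum} of $\bsbeta$ (equivalently of the tail of $\bsgamma$) together with the choice $\alpha=\lfloor 1/p\rfloor+1$, which is precisely what converts the convergence order into $N^{-1/p}$.

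The main obstacle, and the step demanding the most care, is the \emph{weight optimization}: the $\gamma_\setu$ enter the norm bound and the worst-case quantity $e_{s,\alpha,\bsgamma,r'}$ in opposing directions, so they must be tuned so that the product of the two factors is finite \emph{and bounded independently of the truncation dimension $s$} while still delivering the full rate $N^{-1/p}$. This balances the factorial growth $|\bsnu_{E^c}|!$ on the tail against the summability of $(\beta_j)_{j\in E^c}$, and explains why order-dependent (SPOD) weights are unavoidable on the tail whereas pure product weights suffice on the finite block $E$. Finally, the stated construction cost $\calO(\alpha(\min\{s,J\}+\alpha(s-J)_+)N\log N)$, together with the update and memory costs, follows from the structure of the fast CBC algorithm with product-and-order-dependent weights: the product block $E$ is processed at product-weight cost, and only the $(s-J)_+$ tail coordinates incur the more expensive SPOD overhead.
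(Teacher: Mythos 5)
Your proposal is correct and follows essentially the same route as the paper: the paper bounds the unanchored norm of Definition~\ref{def_F_norm} with $r=\infty$ using \eqref{eq:like-norm}, chooses exactly the hybrid product-on-$E$/SPOD-on-$E^c$ weights \eqref{equ:hybridWeight} to make that norm $s$-uniformly bounded, and then invokes Proposition~\ref{thm:wce} together with the fast CBC construction from \cite{DKGNS13,DLGCS14} to get $e_{s,\alpha,\bsgamma,r'}(\calS)\le C N^{-1/p}$. Your only deviations (taking a maximum rather than a sum over $\nu_j\in\{1{:}\alpha\}$ on the block $E$, and writing $(2\beta_j)^{\nu_j}$ in place of $2^{\delta(\nu_j,\alpha)}\beta_j^{\nu_j}$) change the weights by harmless constant factors and do not affect the argument.
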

\begin{remark}
It has been observed, see for instance \cite{GaCS14}, 
that in implementations of the CBC construction of good generating 
vectors components tend to repeat, 
resulting in rather nonuniform lower dimensional projections. 
To avoid this problem, \cite{GaCS14} used a ``pruned'' CBC algorithm 
where components of the generating vector are forced to differ from each other. 
A theoretical justification of this algorithm was provided in \cite{DK15}. 
If we use this modified algorithm in Proposition~\ref{prop:main1} 
then, provided that $N > 2s$, 
the upper bound \eqref{qmc_upper_bound} still remains valid albeit 
with a slightly larger constant:
the constant $C_{\alpha, b}$ is replaced by $2 C_{\alpha, b}$, 
which is still independent of the dimension $s$.
\end{remark}
\begin{remark}\label{rem_interval_change}
The bound \eqref{eq:like-norm} in Theorem~\ref{thm:DsiboundC} 
was shown for functions defined on the domain $[-1,1]^{\mathbb{N}}$. 
However, QMC theory uses the domain $[0,1]^s$. 
The change from $[-1,1]$ to $[0,1]$ can be achieved by the simple 
linear transformation $y \mapsto (y+1)/2$. 
Using \eqref{eq:HybBd} together with this change of variable in Proposition~\ref{prop:main1} 
increases the constant in \eqref{eq:Cab} by a factor of at most $2^\alpha$. 
Thus, in order for the theory to apply to the integrands from 
Sections~\ref{sec:HolOpEq} and \ref{sec:anadepsol}, 
we need to scale the Walsh constant $C_{\alpha, b}$ in \eqref{eq:Cab} 
by a factor $2^\alpha$. 
\end{remark}

If the function $g$ satisfies \eqref{eq:like-norm}, 
then its norm \eqref{eq:defFabs} with $r=\infty$ and for any $q$, 
can be bounded by
\begin{align*}
 \|g\|_{s,\alpha,\bsgamma,q,\infty}
 &\,\le\, c
 \max_{\setu\subseteq\{1:s\}}
 \gamma_\setu^{-1}
 \sum_{\bsnu_\setu \in \{1:\alpha\}^{|\setu|}}
 \bsnu_{\setu \cap E}!\,
 \prod_{j\in\setu\cap E} \left(2^{\delta(\nu_j,\alpha)}\beta_j^{\nu_j}\right)\;
 |\bsnu_{\setu \cap {E^c}}|!\,
 \prod_{j\in\setu \cap {E^c}} \left(2^{\delta(\nu_j,\alpha)}\beta_j^{\nu_j}\right)\;\\
 &\,=\, c(g)
  \max_{\setu\subseteq\{1:s\}}
 \gamma_\setu^{-1}
 \sum_{\bsnu_\setu \in \{1:\alpha\}^{|\setu|}}
 \bsnu_{\setu \cap E}!\,
 |\bsnu_{\setu \cap {E^c}}|!\,
 \prod_{j\in\setu} \left(2^{\delta(\nu_j,\alpha)}\beta_j^{\nu_j}\right)\;,
\end{align*}
where $\delta(\nu_j,\alpha)$ is $1$ if $\nu_j=\alpha$ and is $0$
otherwise. To make  $\|g\|_{s,\alpha,\bsgamma,q,\infty} \le c$, 
we choose
\begin{equation}\label{equ:hybridWeight}
 \gamma_\setu :=  \sum_{\bsnu_\setu \in \{1:\alpha\}^{|\setu|}}
 \bsnu_{\setu \cap E}!\,
 |\bsnu_{\setu \cap {E^c}}|!\,
 \prod_{j\in\setu} \left(2^{\delta(\nu_j,\alpha)}\beta_j^{\nu_j}\right).
\end{equation}
\subsection{Combined QMC-PG Error Bound}
\label{sec:CombErrBd}
We approximate the Bayesian estimate 
$\IE^{\pi^\delta}[\phi] = Z'/Z$ with 
the ratio estimator 
$Z'_{N,s,h}/Z_{N,s,h}$ 
where 
\begin{align}
Z'_{N,s,h} &= Q_{N,s} ( \Theta^{s,h}(\cdot) \phi( q^{s,h}(\cdot) )  \in \IR
\;, 
\label{ZpNsh} 
\\
 Z_{N,s,h}    &= Q_{N,s} (\Theta^{s,h}(\cdot) )  \in \IR
\;,
\label{ZNs} 
\end{align}
where $\Theta^{s,h}$ is given by (cf.\eqref{eq:PostDens} and 
\eqref{eq:PhiN})
\begin{equation}
\Theta^{s,h}(\bsy) 
= 
\exp\bigl(-\Phi^{M}_\Gamma(u;\delta)\bigr)\Bigl|_{u=\langle u \rangle 
+ \sum_{j=1}^s y_j\psi_j},
\end{equation}
and  
$q^{s,h}(\bsy) := q^h(\bsy_{\{1:s\}})$ with 
$q^h(\bsy_{\{1:s\}})$ being the Galerkin approximation of
the forward problem with dimension-truncation to dimension $s$
as defined in \eqref{eq:parmOpEqh_trun}.
We recall that $M=M_h={\rm dim}(\bcX^h) = {\rm dim}(\bcY^h)$ (see Section \ref{sec:Discr}).
We are now in position to prove our main result:
an error bound for the approximate Bayesian estimates
which accounts for the errors of dimension truncation
to finite dimension $s$ given by \eqref{eq:mainstrunc}, 
Petrov-Galerkin discretization
\eqref{eq:parmOpEqh_trun} of \eqref{eq:mainstrunc}
and, finally, the QMC integration \eqref{eq:QNs} of the 
goal functional $G(\cdot)$ evaluated at the dimensionally
truncated solution, i.e.~of the approximation
$Q_{N,s}(G(q^{s,h}))$ of $I(G(q))$. 
All implied constants in the error bounds 
of the ensuing theorem are
independent of the truncation dimension $s$.
\begin{theorem}\label{thm:QMCPGErrBd}
Suppose that 
Assumptions \ref{asmp:LocSolv}, \ref{Ass:LipPhi} and \ref{Ass:ApprPhi}
hold. Suppose further that the parametric forward
problem \eqref{eq:NonOpEqn} admits, for every $\bsy\in U$, 
a unique solution $q(\bsy)$ which belongs to the smoothness
space $\cX_t$ for some $t>0$ affording the approximation
property \eqref{eq:apprprop}. Assume further the 
sparsity condition \eqref{eq:psumpsi} for the uncertainty
parametrization, and that conditions \eqref{eq:ordered} hold.
Then,
for given positive definite observation noise covariance 
$\Gamma$, the normalization constant $Z$ in \eqref{eq:Z} is positive.

Assume further that the QoI $\phi:\bcX \to \IR$ is Lipschitz
continuous in an open ball in $\bcX$ 
about the forward solution $q(\langle u \rangle)$ at the
nominal input $\langle u \rangle \in X$ 
(resp. about the origin $\bsnull\in U$).

Then there exist $N_0,s_0,h_0$ (depending on 
the bound $r>0$ on the size $|\delta|$ of the data,
and on the observation noise 
covariance $\Gamma$ in Assumption \ref{Ass:LipPhi}),
such that
the QMC-PG approximation of the Bayesian estimate
$\IE^{\pi^\delta}[\phi]$ in \eqref{eq:intpsi} 
obtained as approximate ratio estimate
$Z'_{N,s,h}/Z_{N,s,h}$ with
QMC-PG approximations $Z'_{N,s,h}$ and $Z_{N,s,h}$
of the integrals $Z'$ and $Z$ in \eqref{eq:intpsi},
satisfies, for $N\geq N_0$, $s\geq s_0$ and for $h\leq h_0$, 
the error bound
\be\label{eq:RatEstErrBd}
\left| \IE^{\pi^\delta}[\phi] - Z'_{N,s,h}/Z_{N,s,h} \right|
\leq 
C(\Gamma,p) \left(h^t + s^{-(1/p-1)} + N^{-1/p} \right)
\;.
\ee
%
\end{theorem}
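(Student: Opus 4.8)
The plan is to decompose the total error into three independent contributions—dimension truncation, Petrov--Galerkin discretization, and QMC quadrature—and to bound each separately by the corresponding term in \eqref{eq:RatEstErrBd}. The key structural observation is that $\IE^{\pi^\delta}[\phi] = Z'/Z$ is a \emph{ratio} of two integrals, so the first step is a stability lemma for ratios: if $|Z - \hat Z| \le \eta$ and $|Z' - \hat Z'| \le \eta'$ with $Z > 0$ and $\hat Z$ bounded away from $0$, then
\be\label{eq:ratstab}
\left| \frac{Z'}{Z} - \frac{\hat Z'}{\hat Z} \right|
\le \frac{1}{\hat Z}\, |Z' - \hat Z'| + \frac{|Z'|}{Z \hat Z}\, |Z - \hat Z|
\le C\,(\eta + \eta')\;.
\ee
Positivity of $Z$ is guaranteed by the hypotheses (as asserted in the theorem statement), and for $N,s$ large and $h$ small the approximate normalization $Z_{N,s,h}$ stays uniformly bounded below because $\Theta^{s,h}$ converges to $\Theta$ uniformly in $\bsy$. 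Hence it suffices to bound $|Z - Z_{N,s,h}|$ and $|Z' - Z'_{N,s,h}|$ each by $C(h^t + s^{-(1/p-1)} + N^{-1/p})$.

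Next I would split each of these two errors via the triangle inequality through intermediate quantities in which one approximation parameter at a time is varied. Writing $Z = I(\Theta)$, $Z^{s} = I_s(\Theta^{s})$ (dimension-truncated, exact forward solve), $Z^{s,h} = I_s(\Theta^{s,h})$ (truncated and Galerkin-discretized), and finally $Z_{N,s,h} = Q_{N,s}(\Theta^{s,h})$, one has
\be\label{eq:triple}
|Z - Z_{N,s,h}|
\le |I(\Theta) - I_s(\Theta^{s})|
  + |I_s(\Theta^{s}) - I_s(\Theta^{s,h})|
  + |I_s(\Theta^{s,h}) - Q_{N,s}(\Theta^{s,h})|\;.
\ee
The first term is the dimension-truncation error, controlled by Proposition~\ref{prop:trunc} (estimate \eqref{eq:Idimtrunc} together with the Stechkin bound \eqref{eq:DTbound}), giving $O(s^{-(1/p-1)})$; here I use that $\Theta$ depends on $\bsy$ only through $q(\bsy)$ and is Lipschitz in $q$, so a bound on $\|q - q^s\|_\bcX$ transfers to $\Theta$. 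The second term is the Galerkin consistency error: by Proposition~\ref{prop:PhiNCheck} the potential difference is controlled by $\|(q - q_M)(\bsy)\|_\bcX$, which by the quasioptimality estimate \eqref{eq:convrate} of Proposition~\ref{prop:stab} (and its dimension-truncated variant, Corollary~\ref{coro:DimTrcStab}) is $O(h^t)$ uniformly in $\bsy$; exponentiating and integrating preserves this rate since $|e^{-a} - e^{-b}| \le |a-b|$ for the bounded-below potentials. The third term is the QMC quadrature error, handled by Proposition~\ref{prop:main1}: the integrand $\Theta^{s,h}$ is $(\bsb,p,\eps)$-holomorphic by Proposition~\ref{thm:pdTheta}, hence satisfies the SPOD derivative bounds \eqref{eq:HybBd} uniformly in $s$ and $h$ (by Corollary~\ref{coro:DsiBoundThPsi}), so the interlaced polynomial lattice rule of order $\alpha = \lfloor 1/p\rfloor + 1$ yields the dimension-independent rate $O(N^{-1/p})$.

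The same three-way split applies verbatim to $Z'$, using $\Psi = \Theta\,\phi(q)$ in place of $\Theta$; here the Lipschitz continuity of the QoI $\phi$ near $q(\langle u\rangle)$ (an explicit hypothesis) is needed so that $\phi(q^{s,h})$ inherits both the truncation and discretization rates, and Proposition~\ref{thm:pdTheta} again supplies $(\bsb,p,\eps)$-holomorphy of $\Psi$ so that the QMC bound \eqref{qmc_upper_bound} applies to it as well. Combining the bounds for $Z$ and $Z'$ through the ratio-stability inequality \eqref{eq:ratstab} yields the claimed bound \eqref{eq:RatEstErrBd}, with the three error sources appearing additively. I expect the main technical obstacle to be the second term in \eqref{eq:triple}: one must verify that the Galerkin error in the forward solve propagates \emph{uniformly in $\bsy\in U$} through the nonlinear maps $q \mapsto \Theta(\bsy)$ and $q \mapsto \Psi(\bsy)$ without a hidden dependence on $s$ or on the ``activated'' coordinates, which requires the uniform lower bound \eqref{eq:PhiBddbelow2} on the potential together with the $s$-independent constants in Corollary~\ref{coro:DimTrcStab}; the interplay of these uniformities, rather than any single estimate, is where care is needed.
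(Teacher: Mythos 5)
Your overall strategy --- the ratio-stability inequality for $Z'/Z$ followed by a three-way telescoping into dimension-truncation, Petrov--Galerkin, and QMC contributions --- is exactly the paper's strategy, and the first and third error sources are handled the same way. The gap is in the \emph{ordering} of your telescoping for the middle term. You insert the intermediate quantities $I_s(\Theta^{s}) \to I_s(\Theta^{s,h}) \to Q_{N,s}(\Theta^{s,h})$, so that the QMC error bound of Proposition~\ref{prop:main1} is applied to the \emph{fully discretized} integrand $\Theta^{s,h}$ (resp.\ $\Psi^{s,h}$). You justify this by saying $\Theta^{s,h}$ is $(\bsb,p,\eps)$-holomorphic ``by Proposition~\ref{thm:pdTheta}'' --- but that proposition derives holomorphy of $\Theta$ from holomorphy of the \emph{exact} forward map $\bsy\mapsto q(\bsy)$. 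To apply the QMC bound to $\Theta^{s,h}$ with constants uniform in $h$ you would additionally have to verify that the Petrov--Galerkin solution $q^{s,h}(\bsz)$ admits a holomorphic extension to an $h$-independent polytube with $h$-uniform bounds (i.e., discrete inf-sup conditions on the complexified parameter domain, uniformly in $h$), which is a nontrivial extra verification that nothing in your argument supplies. The sup-norm consistency estimates you invoke at the end (the lower bound \eqref{eq:PhiBddbelow2}, the $s$-independent constants of Corollary~\ref{coro:DimTrcStab}) control $\|q^s - q^{s,h}\|_\bcX$ but say nothing about the parametric \emph{derivatives} of $q^{s,h}$, which is what \eqref{eq:like-norm} requires.

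The paper deliberately avoids this issue by ordering the split as
$$
\left| I(\Psi) - Q_{N,s}( \Theta \phi(q^{s,h})) \right|
\lesssim
\left| I(\phi(q)) - I(\phi(q^s)) \right|
+ \left| I_s(\phi(q^s)) - Q_{N,s}(\phi(q^s)) \right|
+ \left| Q_{N,s}(\phi(q^s)) - Q_{N,s}(\phi(q^{s,h})) \right| ,
$$
so that Proposition~\ref{prop:main1} is only ever applied to the exact truncated integrand $\phi(q^s)$, while the discretization error enters as a difference of two quadrature sums, bounded by $\sup_{\bsy}\|q^s(\bsy)-q^{s,h}(\bsy)\|_\cX \lesssim h^t$ via \eqref{eq:convrate} --- a pure sup-norm estimate needing no derivative bounds for $q^{s,h}$. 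The paper even states this explicitly in the remark preceding its proof (``the verification of $h$-independence of the domain of holomorphy of the PG approximation \dots is not necessary''). Your closing paragraph correctly senses that the interaction of the Galerkin error with the QMC machinery is the delicate point, but locates the difficulty in the wrong uniformities; the fix is simply to reorder your telescoping so the quadrature error bound never touches the discretized integrand.
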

Before giving the proof, we remark that the constant 
$C(\Gamma,p)$ depends on the observation noise
covariance $\Gamma > 0$ and on the summability exponent $p$,
but is independent of the truncation dimension $s$ 
and of the number $N$ of QMC points. 
We refer to \cite{SS14} for details.
We also remark that the ensuing proof does not require
bounds \eqref{eq:like-norm} for (functionals of) 
the Petrov-Galerkin discretized solution and, therefore, 
the verification of $h$-independence of the domain of holomorphy of the PG
approximation $q^{s,h}(\bsy)$ of the parametric forward solution
is not necessary.
\begin{proof} (of Theorem \ref{thm:QMCPGErrBd}):
Both integrands $\Psi(\bsy)$ and $\Theta(\bsy)$ in 
the definition \eqref{eq:intpsi} of $Z$ and of $Z'$
are $(\bsb,p,\eps)$-holomorphic by Proposition \ref{thm:pdTheta}.
By Theorem \ref{thm:DsiboundC}, 
their approximations $\Psi^{s,h}(\bsy)$ and $\Theta^{s,h}(\bsy)$
satisfy the derivative estimates \eqref{eq:HybBd} and \eqref{eq:FsPODBd},
uniformly with respect to the truncation dimension $s$, and the 
Petrov-Galerkin discretization parameter $h$: 
as dimension truncation amounts to restricting $\bsy$ to $\bsy_{\{ 1: s\}}$
(see Section \ref{sec:dimtrunc}), the QMC error bounds in 
Proposition~\ref{prop:main1} are applicable.

By the triangle inequality, we have
\begin{equation}\label{triangle}
\begin{aligned}
|Z' Z_{N,s,h} - Z'_{N,s,h} Z| &= | Z'Z_{N,s,h} - ZZ' + ZZ' - Z'_{N,s,h} Z | 
\\
& \le |Z'| |Z_{N,s,h} - Z| + |Z| |Z' - Z'_{N,s,h}| 
\;.
\end{aligned}
\end{equation}
By condition \eqref{eq:Z}, 
there is $N_0, s_0 \in \IN$ and $h_0>0$
so that $Z_{N,s,h} \ge Z/2$
for all $N\geq N_0$, $s\geq s_0$ and $0<h\leq h_0$.
Using this and \eqref{triangle} we obtain
\begin{align*}
\left| \frac{Z'}{Z} - \frac{Z'_{N,s,h}}{Z_{N,s,h}} \right|
 &  = \frac{|Z'Z_{N,s,h} - Z'_{N,s,h} Z |}{ |Z Z_{N,s}|} 
 \le \frac{ |Z'| |Z_{N,s,h} - Z|}{|Z Z_{N,s,h}|} 
    + \frac{|Z'-Z'_{N,s,h}|}{|Z_{N,s,h}|} 
\\
 & \le  \frac{2 |Z'| |Z_{N,s,h} - Z|}{|Z|^2} + \frac{2 |Z'-Z'_{N,s,h}|}{|Z|} 
\;.
\end{align*}
With $Z = I(\Theta)$ and  $Z_{N,s,h} = Q_{N,s}(\Theta^{s,h})$, 
we bound the error $|Z - Z_{N,s,h}|$ using Proposition~\ref{prop:main1}.
There exists a constant $C>0$ (independent of the parameter dimension $s$) 
such that for all $N$
%
\begin{align*}
|I(\Theta) - Q_{N,s}(\Theta)| \le C N^{-1/p}
\;.
\end{align*}
With $Z' = I(\Theta \phi(q))$ and $Z'_{N,s,h}$ as in \eqref{ZpNsh},
we bound the combined error $|Z' - Z'_{N,s,h}|$ 
incurred by dimension-truncation, 
QMC integration and Petrov-Galerkin discretization as follows:
$$
\begin{array}{rcl}
\left| I(\Theta \phi(q)) - Q_{s,N}( \Theta \phi(q^{s,h})) \right|
& \leq  & \displaystyle 
\| \Theta \|_\infty \big( \left| I(\phi(q)) - I(\phi(q^s)) \right|
+  
\left| I(\phi(q^s)) - Q_{N,s}(\phi(q^s)) \right| 
\\
& + & \displaystyle 
\left| Q_{N,s}(\phi(q^s)) - Q_{N,s}(\phi(q^{s,h})) \right| \big)
\;.
\end{array}
$$
Here, the first term can be bounded using Proposition~\ref{prop:trunc},
and \eqref{eq:Idimtrunc} and \eqref{eq:DTbound} 
yield
$$
\left| I(\phi(q)) - I(\phi(q^s)) \right|
= 
\left| I(\phi(q)) - I_s(\phi(q)) \right|
\leq 
C(p) 
\| \phi \|_{\cL(\cX,\IR)} s^{-(1/p-1)} 
\;.
$$
The second term is bounded using Proposition~\ref{prop:main1} 
as
$$
\left| I(\phi(q^s)) - Q_{N,s}(\phi(q^s)) \right|
=
\left| I_s(\phi(q)) - Q_{N,s}(\phi(q^s)) \right|
\leq 
CN^{-1/p} 
\;.
$$
The third term is estimated by the PG-discretization error
$$
\left| Q_{N,s}(\phi(q^s)) - Q_{N,s}(\phi(q^{s,h})) \right|
=
\left| Q_{N,s}(\phi(q^s - q^{s,h})) \right|
\leq 
\| \phi \|_{\cL(\cX,\IR)} 
\sup_{\bsy\in U} \| q^s(\bsy) - q^{s,h}(\bsy) \|_\cX \;,
$$
which, in turn, is estimated by Proposition~\ref{prop:stab} 
as 
$$
\left| Q_{N,s}(\phi(q^s)) - Q_{N,s}(\phi(q^{s,h})) \right|
\leq 
\frac{C}{\bar{\mu}}  \| \phi \|_{\cL(\cX,\IR)} h^t \sup_{\bsy\in U} \| q(\bsy) \|_{\bcX_t} 
\;.
$$
\end{proof}
\begin{remark}\label{rmk:Duality}
The preceding analysis used that $\phi\in \cL(\bcX,\IR)= \bcX'$. 
If the QoI $\phi$ has more regularity, such as $\phi \in \cL(\bcX_{t'},\IR)$ 
for some $t' > 0$, higher PG convergence rates $O(h^{t+t'})$ 
in \eqref{eq:RatEstErrBd} are possible, provided that the 
adjoint of the differential $ (D_q\cR)(u;q_0) $ 
is boundedly invertible in suitable scales of spaces.
We refer to \cite{KSS12,KSS13,DKGS14} for a statement of 
results and proofs in the case of linear, affine-parametric forward problems.
\end{remark}
%
\section{Model Problems}\label{sec:modelproblems}
We illustrate the hypotheses in Section~\ref{sec:HolOpEq},
with a view towards the numerical experiments in 
Section~\ref{sec:results} ahead, by a model, 
affine-parametric diffusion
problem which was already considered in \cite{CDS2}.
We emphasize that these model problems are selected
to illustrate the preceding error analysis; 
the preceding theory is applicable to considerably more general problems.
\subsection{Affine-parametric diffusion problem}
\label{sec:DiffEq}
To illustrate the preceding convergence estimates,
we consider a linear  elliptic diffusion equation on 
the physical domain $D=(0,1)^d$ in space dimension $d=1$ or $d=2$,
with uncertain diffusion coefficient $u(x)$, 
known source term $f(x)\in L^2(D)$ 
and with homogeneous Dirichlet boundary conditions, ie.,
\be \label{eq:diff:diffeq}
 -\nabla\cdot \left(u(x) \nabla q(x)\right) = f(x) \,\text{ in } D=(0,1)^d \;,
\quad 
    q(x) = 0 \, \text{ for } x\in\partial D\;. 
\ee
Problem~\eqref{eq:diff:diffeq} is a particular instance of the 
abstract setting introduced in Section~\ref{sec:OpEqUncInp},
with $\bcX = \bcY = H^1_0(D)$ and with deterministic right-hand side $F\in \bcY'$  
obtained by associating to $f$ a continuous, linear functional
$F$ on $\bcX = H^1_0(D)$, i.e.
\begin{equation*}
    A(u;q) = -\nabla\cdot \left(u \nabla q\right) \in \cL(H^1_0(D),H^{-1}(D)), 
\qquad F(\cdot) = \langle f, \cdot\rangle 
\;.
\end{equation*}
We now parametrize the diffusion coefficient as in \eqref{eq:uviapsi}
with the $s$-dimensional parameter vector $\bsy\in U_s=[-1,1]^s$,
which we indicate by writing $u(x,\bsy)$.
Note that the QMC integration domain $U_s = [-\frac12,\frac12]^s$
can be equivalently obtained by rescaling.
For a nominal input $\langle u \rangle \in X := L^\infty(D)$, 
a finite truncation dimension $s$
and a sequence $(\psi_j)_{j\geq 1} \subset X$, 
we consider affine-parametric uncertainties
\begin{equation}\label{eq:diff:param}
    u(\cdot,\bsy)
    =
   \langle u \rangle+ 
    \sum_{j=1}^s y_j \psi_j(\cdot)
\;.
\end{equation}
We assume that the nominal coefficient 
$\langle u \rangle$ is positive and bounded,
\be\label{eq:baruBd}
0 < \bar{u}_{\min} \leq \langle u \rangle \leq  \bar{u}_{\max} \;.
\ee
We suppress in the following the explicit dependence on $x$. 
We also assume the \emph{sparsity condition}
$(\|\psi_j\|_{L^\infty(D)})_j\in\ell^r(\bbN)$ 
for some $0<r<1$.
%
The parametric weak formulation reads:
for $\bsy\in U_s$ find $q(\cdot,\bsy)\in H^1_0(D)$ such that
for all $v \in H^1_0(D)$ holds
\begin{equation}\label{eq:diff:weak}
    \int_D \! u(x,\bsy)
        \nabla_x q(x,\bsy)
        \cdot
        \nabla_x v(x)
        \,\dd x
    =
    \int_D \! f(x) v(x) \,\dd x
\;.
\end{equation}
In \cite{CCS2}, Problem~\eqref{eq:diff:weak}, with $U$ given by \eqref{eq:diff:param},
was considered for complex parameter sequences 
$\bsz = (z_j)_{j\geq 1}$ under the following assumption.
\begin{assumption}[Uniform Ellipticity] 
\label{asmp:UE}
There exist constants $ 0 < u_- < u_+ < \infty$ 
such that for a.e. $x\in D$ and for all 
$\bsz\in \calU_s=\{ z_j\in \IC: |z_j|\leq 1, 1\le j \le s \}$, 
$s\in \IN$, there holds
\begin{equation} \label{eq:UE+-}
    0 < u_- \le \Re u(\bsz) \le u_+ < \infty \;.
\end{equation}
\end{assumption}
We remind the reader that $\langle u \rangle$ 
and $\psi_j$ are assumed to be real-valued functions.  
The choice $\bsz = \bsnull$ 
in Assumption~\ref{asmp:UE} and \eqref{eq:baruBd} 
imply $u_- \leq \langle u \rangle \leq u_+$.  
A sufficient condition for Assumption~\ref{asmp:UE} to hold is 
\eqref{eq:baruBd} and the condition
\be\label{eq:UE1}
\gamma := \left\| \frac{1}{ \langle u \rangle} 
\sum_{j\geq 1} |\psi_j| \right\|_{L^\infty(D)} < 1\;.
\ee
The Lax--Milgram lemma implies, with \eqref{eq:UE+-}, for every fixed $\bsz\in \calU_s$,
and for every $s\in \bbN$
the existence of a unique solution to the variational problem \eqref{eq:diff:weak}.
Symmetric Galerkin discretization as explained in Section \ref{sec:Discr},
yields for each parameter instance $\bsy\in U_s$ 
a unique, parametric Galerkin solution 
$q^h(\bsy) \in \bcX^h\subset \bcX = H^1_0(D)$.
The family $\{ q^h(\bsy) : \bsy \in U_s \}\subset \bcX^h$ is uniformly bounded
in $\bcX$ with respect to $s$, $h$ and $\bsy\in U_s$.
\subsection{Parametric Regularity}
\label{sec:ParReg}
The parametric 
solution family $\bsz \to q(\bsz)$ of 
the linear forward problem \eqref{eq:diff:diffeq}
with complex-parametric input \eqref{eq:diff:param}
is, for any value of $s$, 
holomorphic in the sense of Definition \ref{def:peanalytic}.
To verify this, we recall from \cite[Eqn. (2.8)]{CDS2}
the notion of \emph{$\delta$-admissibility} of a poly-radius
$\bsrho = (\rho_j)_{j\geq 1}$ with $\rho_j > 1$: 
$\bsrho$ is called $\delta$-admissible if there exists a $\delta > 0$ such that
\be\label{eq:DelAdm}
\sum_{j\geq 1} \rho_j |\psi_j(x)| \leq 
    \langle u \rangle -\delta \quad \mbox{for almost all} \quad x\in D\;.
\ee %
As above, $\delta$-admissibility \eqref{eq:DelAdm} of the poly-radius $\bsrho$ 
implies \emph{$\bsrho$-weighted ellipticity}
\be\label{eq:UErho}
\gamma(\bsrho) 
:= 
\left\| \frac{1}{\langle u \rangle} \sum_{j\geq 1} \rho_j |\psi_j| \right\|_{L^\infty(D)} 
< 1\;.
\ee
Under \eqref{eq:UE1}, the sequence $\rho_j = 1$ is $\delta$-admissible for some $\delta > 0$ 
and \eqref{eq:UErho} and \eqref{eq:UE1} coincide.

It was shown in \cite[Lemma 2.4]{CDS2} that under 
condition \eqref{eq:DelAdm} or \eqref{eq:UErho},
the family of parametric solutions 
$\{ q(\bsy) \in \bcX: \bsy \in U\}$
admits a holomorphic extension to the polydisc 
$\calU_\bsrho := \prod_{j\geq 1} \{ |z_j|<\rho_j \}\subset \IC^{\IN}$.
In particular, for every fixed $\bsy \in U$, 
$q(\bsy)$ admits an extension with respect to $\bsy_{\{1:s\}}$ 
to the finite dimensional cylinder 
$\calU_{\bsrho,s} :=  \prod_{1\leq j\leq s} \{ |z_j|<\rho_j \}\subset \IC^{s}$
for any finite dimension $s\in \IN$ with modulus $\| q(\bsz) \|_{\bcX}$ uniformly
bounded with respect to $\bsz\in \calU_{\bsrho,s}$, and uniformly with respect to 
$s\in \IN$.

In particular, for uncertainty parametrizations \eqref{eq:diff:param}
with sequences $(\psi_j)_{j\geq 1}$ satisfying \eqref{eq:UErho} with
$\bsrho\in \ell^p(\IN)$, Theorem \ref{thm:DsiboundC} then implies
that the parametric solution family $q(\bsz)$ is $(\bsb,p,\eps)$-holomorphic
with 
$\bsb = \left( \frac{1} {\langle u \rangle} \|\psi_j\|_{L^\infty(D)}  \right)_{j \ge 1}$.
This in turn implies by Proposition \ref{thm:pdTheta} that the parametric
Bayesian posterior densities $\Theta(\bsy)$ and $\Psi(\bsy)$ are 
$(\bsb,p,\eps)$ holomorphic, so that by Theorem \ref{thm:DsiboundC}
the parameter derivatives of $q(\bsy)$, $\Theta(\bsy)$ and $\Psi(\bsy)$ 
satisfy, for $\bsy\in U_s$, with any 
finite parameter dimension $s\in \IN$ the bounds \eqref{eq:HybBd} and \eqref{eq:FsPODBd}.
\subsection{Example 1: \KL series}
\label{sec:ExplKL}
Here, we choose $d=1$, $D=(0,1)$, $\langle u \rangle = 1$ 
and, for some parameter $\zeta > 1$ to be specified,
\be\label{eq:TrigBas}
\psi_{2j}(x) = (2j)^{-\zeta} \sin(j\pi x),\quad \psi_{2j-1}(x) = (2j-1)^{-\zeta} \cos(j\pi x)
\ee
Then for every finite $s$, the $s$-term expansion \eqref{eq:diff:param} 
is smooth with respect to $x$; this regularity is, however, not uniform
with respect to the truncation dimension $s$:
the choice of $\zeta$ in \eqref{eq:TrigBas} limits the 
spatial Sobolev regularity of $u(\bsy)$ 
in Sobolev and H\"older scales in the physical domain $D$.
Here we have \eqref{eq:UE1} and
$\bsb = (\| \psi_j \|_{L^\infty(D)})_{j\geq 1}\in \ell^p(\IN)$
holds for any $1\geq p > 1/\zeta$.
\subsection{Example 2: Indicator functions}
\label{sec:Indic}
When $\psi_j$ are indicator functions of sets $D_j \subset D$
which form a partition of $D$ and could
be viewed as a model for material mixtures.
We choose the basis in parametric space $\{\psi_j(x)\}_{j=1}^s$
as ``step functions'' with supports $D_j$, i.e.
\begin{equation}\label{eq:stepbasis}
    \psi_j(x) = b_j \chi_{D_j}(x),  \; b_j = \theta j^{-\zeta}, 
\; 0\leq \theta \leq u_- / 4 \;,
\end{equation}
where $\chi_I$ denotes the characteristic function of the interval $I\subset D$
and where $\{ D_j \}_{j=1}^s$ denote a partition of the physical domain $D$.
For the choice \eqref{eq:stepbasis} of $\psi_j$, 
\eqref{eq:UErho} holds with the particular sequence 
\be\label{eq:CharFctrho}
\rho_j 
= 1 + \frac{u_-}{4\|\psi_j\|_{L^\infty(D)}} 
= 1 + \frac{u_-}{4b_j}
\;. 
\ee
This follows from 
$$
\left\| \sum_{j=1}^s \rho_j \psi_j \right\|_{L^\infty(D)} 
= 
\max_{1\leq j \leq s} \left( 1 + \frac{u_-}{4b_j} \right) b_j 
=
\frac{u_-}{4} + \| \bsb \|_{\ell^\infty} 
\leq 
\frac{u_-}{2} 
\;.
$$
In this case 
$E = \bbN$ and 
$E^c=\emptyset$  in Proposition~\ref{prop:main1}
so that HoQMC weights in \eqref{equ:hybridWeight} become product weights.
This, in turn, implies in Proposition~\ref{prop:main1} that 
the complexity of the CBC construction for the HoQMC generating vectors
scales linearly with the parameter dimension $s$. 
\section{Numerical Results}\label{sec:results}
\subsection{Construction of Interlaced Polynomial Lattice Rules}
\label{sec:CBC}
We consider the construction of interlaced polynomial 
lattice rules introduced in \cite{DKGNS13}
by the component-by-component (CBC) algorithm.
Proposition~\ref{prop:main1} shows that the CBC construction is feasible in
$\mathcal{O}(\alpha^2s^2N\log N)$ operations for SPOD weights
and in $\mathcal{O}(\alpha sN\log N)$ operations for product weights,
which were both verified computationally in \cite{GaCS14}.
We consider the examples mentioned in Sections~\ref{sec:ExplKL}~and~\ref{sec:Indic} above,
which are of the SPOD and product type, respectively.
The SPOD weights case corresponds to an empty set $E$ in Proposition~\ref{prop:main1};
for the product weights case we have $J=\infty$, i.e.~$E^c$ is empty.

In a computer implementation of the CBC algorithm for 
polynomial lattice rules, it is most natural to use the base $b=2$,
since then polynomials over $\mathbb{Z}_2$ of degree less than $m$ can be represented
as bit sequences of length $m$, allowing for very efficient bit-wise manipulations.
In \cite{Yoshiki15} it was shown that for $b=2$, we have $C_{\alpha,b}=1$,
which yields significantly improved generating vectors
(in terms of the observed convergence of the integration error for suitable test integrands)
compared to the previous bounds with $C_{\alpha,b}>1$.
Further improvement is observed in practice by 
reducing the numerical value of $C$ used in the CBC construction;
in all experiments below, we use generating vectors based on the choice $C=0.1$.
A computational study of the impact on the choice of the Walsh
constant was performed in \cite{GaCS14}.
\subsection{Approximation of Prior Expectation}
\label{sec:AppPrior}
We consider here forward uncertainty quantification under the
assumption of uniformly distributed parameters $\bsy\in U_s$,
i.e.~we choose the prior distribution $\bpi_0 = \lambda^s$,
where $\lambda^s$ denotes the $s$-dimensional Lebesgue measure.
We denote in the following by $q^h(x,\bsy)$ the finite element approximation
of the solution $q$ of \eqref{eq:diff:diffeq} with discretization parameter $h$.
The goal of the computation is then the approximation of the expectation
of a quantity of interest (QoI) function $\phi(\bsy)$.
We choose here and in the following as the quantity of interest
the point evaluation of $q^h$ at the point $\bar{x}=0.25$,
$\phi(\bsy) = q^h(\bar{x},\bsy)$,
and thus seek to compute the approximation
\begin{equation}
    E^{\bpi_0}[\phi] :=
    \frac{1}{N} \sum_{n=0}^{N-1} q^h(\bar{x};\bsy^{(n)})
    \quad\approx\quad
    \mathbb{E}^{\bpi_0}[\phi] =
    \int_{U_s} \! q^h(\bar{x};\bsy) \,\dd\bsy
    ,
\end{equation}
where $\{\bsy^{(n)}\}_{n=0}^{N-1}$ is the interlaced polynomial lattice point set.
We are interested in the rate of convergence of the quadrature approximation $E^{\bpi_0}$
to the true value $\mathbb{E}^{\bpi_0}$.
Additionally, we consider the convergence of the
dimension truncation and finite element errors, where applicable.

\subsection{Approximation of Posterior Expectation}
The approximation of the Bayesian inverse, as introduced in Section \ref{sec:BayInv},
is computationally very similar to the approximation of the prior expectation,
since we must compute the ratio estimate $\mathbb{E}^{\bpi^\delta}[\phi]=Z'/Z$,
where $Z$ and $Z'$ are high-dimensional integrals given by \eqref{eq:Z} and \eqref{eq:intpsi}.
We apply the same higher-order QMC rule mentioned above to both of these integrals.

Note that when using the same QMC rule for both integrals,
the forward model is evaluated at exactly the same parameters
$\bsy^{(0)},\ldots,\bsy^{(N-1)}$ in both integrands;
this allows a simple optimization leading to a reduction of the computational work by a factor two.
Additionally storing the result of the prior expectation in the iteration over the
quadrature points allows both forward and inverse UQ problems
to be solved in one run,
still requiring only $N$ total evaluations of the forward model.

We briefly describe our choice of observation operator and specific noise model.
As observation operator, we consider the evaluation of the solution $q(\cdot,\bsy)$
at the $K=3$ points $\bsx_{obs}=(0.2,0.5,0.7)$.
As in the exposition in Section~\ref{sec:BayInv} above,
we consider measurements perturbed by additive Gaussian noise $\eta\sim\mathcal{N}(0,\Gamma)$
with known covariance matrix $\Gamma\in\IR^{K\times K}_{sym}$.
In the experiments,
we consider one fixed instance of the measurement,
i.e.~we generate one realization of $\eta\in\IR^3$ which determines
the measurement used throughout the computation.
The measurement $\delta\in\IR^3$ is then given for the unknown,
exact value of the parameter $\bsy^\star$ as
\begin{equation*}
    \delta =
    \begin{pmatrix}
        q(0.2,\bsy^\star)+\eta_1 \\
        q(0.5,\bsy^\star)+\eta_2 \\
        q(0.7,\bsy^\star)+\eta_3
    \end{pmatrix}
    .
\end{equation*}
We assume independent noise for each component and unit variance,
giving as covariance matrix simply the identity $\Gamma=\mathbb{I}_{3\times3}$.
The parameters of the diffusion equation were chosen such that this makes sense,
i.e.~the size of the range of values of the observations are roughly $\calO(1)$.
As output quantity of interest,
the solution evaluated at the point $\bar{x}=0.25$ is used,
as in the prior expectation above.

\subsection{Results for Example 1: \KL Series}
\label{sec:res:KL}
We consider the trigonometric basis \eqref{eq:TrigBas}
where $\bar{u}$ in \eqref{eq:diff:param} is chosen to be a constant
of the size necessary to fulfill the uniform ellipticity conditions \eqref{eq:UE+-}.
For the \KL basis functions \eqref{eq:TrigBas},
we have $\|\psi_j\|_{L^\infty(D)} = j^{-\zeta}$, $\rho_j=1$,
and thus $(\|\psi_j\|_{L^\infty(D)})_j\in\ell^p(\bbN)$ 
with $ 1 \geq p > \frac{1}{\zeta}$.
The CBC construction of the generating vectors was based on 
SPOD weights (i.e., \eqref{equ:hybridWeight} with $E=\emptyset$), given by
\begin{equation*}
    \gamma_\setu :=  \sum_{\bsnu_\setu \in \{1:\alpha\}^{|\setu|}}
    |\bsnu_{\setu}|!\,
    \prod_{j\in\setu} \left(2^{\delta(\nu_j,\alpha)}\beta_j^{\nu_j}\right)
    ,
\end{equation*}
where $\delta(\nu_j, \alpha) = 1$ if $\nu_j = \alpha$ and $\delta(\nu_j, \alpha) = 0$ otherwise 
and with the sequence $\beta_j=\theta j^{-\zeta}$ with $\theta=0.2$ and $\zeta=2$.
We thus expect the QMC quadrature approximation to both prior and posterior expectation
to converge with $s$-independent rate $\calO(N^{-\zeta})$ for 
digit interlacing parameter $\alpha \geq 2$ in \eqref{alpha}.
To compare these QMC based results to the performance of existing methods,
we also apply standard Monte Carlo sampling consisting of realizations of
uniformly distributed points in $U_s$; as a (rough) work measure we 
use in either case the number $N$ of samples, which coincides with 
the number of forward PDE solves. As the algorithms considered in the 
present paper are single level algorithms, all PDE solves were performed
with equal accuracy; multi-level extensions of either method
are available and are known to deliver improved work versus accuracy
\cite{KSS13,DKGS14}.

We briefly list the parameters used in the simulations.
As right-hand side forcing term, we consider the function $f(x)=100x$.
Unless otherwise specified, we use a solution with $N=2^{20}$ quadrature points
and meshwidth $h=2^{-20}$ as a reference.
In the Monte Carlo results,
we approximate the $L^2$ error using $10$ repetitions of the estimator.
In the Bayesian inverse problem,
we assume the measurement errors to result from a normal distribution with unit variance.


\subsubsection{Finite Element Approximation}
We solve the PDE \eqref{eq:diff:diffeq} by the finite element method
with piecewise linear basis functions
and meshes obtained by regularly refining an initial mesh
consisting of the points $\{0,1\}$.
The PG discretization error of the posterior approximation,
$|\mathbb{E}^{\mu^\delta}[q(\bar{x};\bsy)] - \mathbb{E}^{\mu^\delta}[q^h(\bar{x};\bsy)]|$,
is measured by replacing $q(\bar{x};\bsy)$ with a reference solution
obtained on a mesh with meshwidth $h=2^{-20}$.
Since as QoI we consider only the evaluation of the solution at the point $\bar{x}$,
we only consider the absolute value of the (scalar) results.
For QMC quadrature, we use $N=2^{10}$ points.
The convergence results of the finite element error of the posterior approximation,
as well as of the individual integrands $Z$ and $Z'$
are shown in Figure \ref{fig:fem}.

\subsubsection{Dimension Truncation}
\label{sec:DimTrc}
To numerically verify the convergence rate of the error committed by 
dimension truncation to a finite dimension $s<\infty$,
we consider the QMC-PG approximation of $\mathbb{E}_N^{\bpi^\delta}[\phi(q^h_s)]$ for varying $s$.
In order to be able to neglect the other two error contributions,
the finite element meshwidth is chosen as $h=2^{-20}$ and $N=2^{20}$ QMC points are used.
By \eqref{eq:DTbound}, we expect a convergence rate of $s^{-(1/p-1)}$;
for the case $\zeta=2$ in \eqref{eq:stepbasis}, 
we have $p>1/2$, giving an expected rate of $s^{-1+\varepsilon}$ for an $\varepsilon>0$.
In Figure~\ref{fig:strunc}, this expected convergence rate can be clearly seen.

\begin{figure}[H]
    \centering
    \begin{minipage}[b]{0.49\linewidth}
        \centering
        \includegraphics[width=\textwidth]{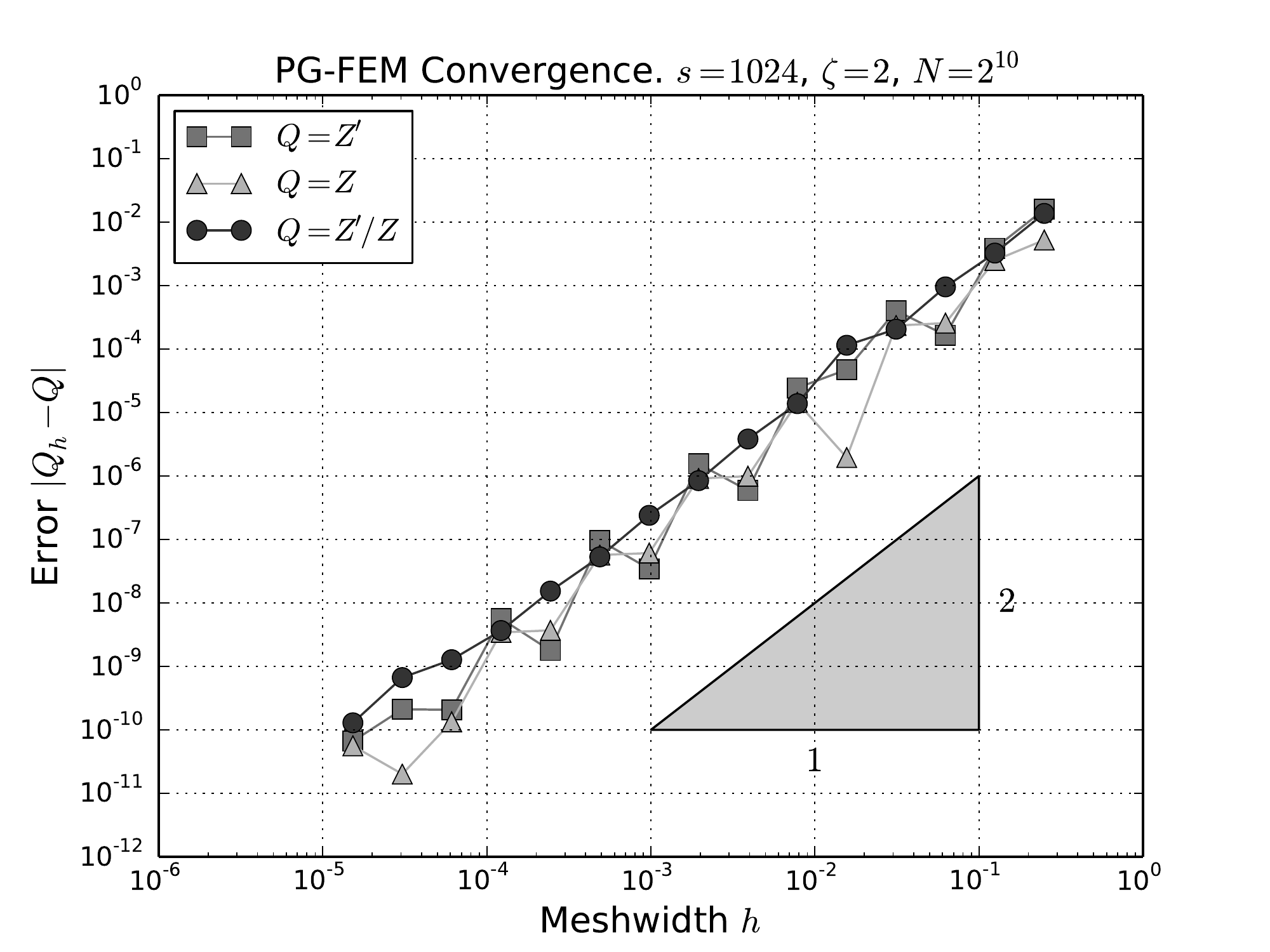}
        \subcaption{FEM error, KL basis}
        \label{fig:fem}
    \end{minipage}
    \begin{minipage}[b]{0.49\linewidth}
        \centering
        \includegraphics[width=\textwidth]{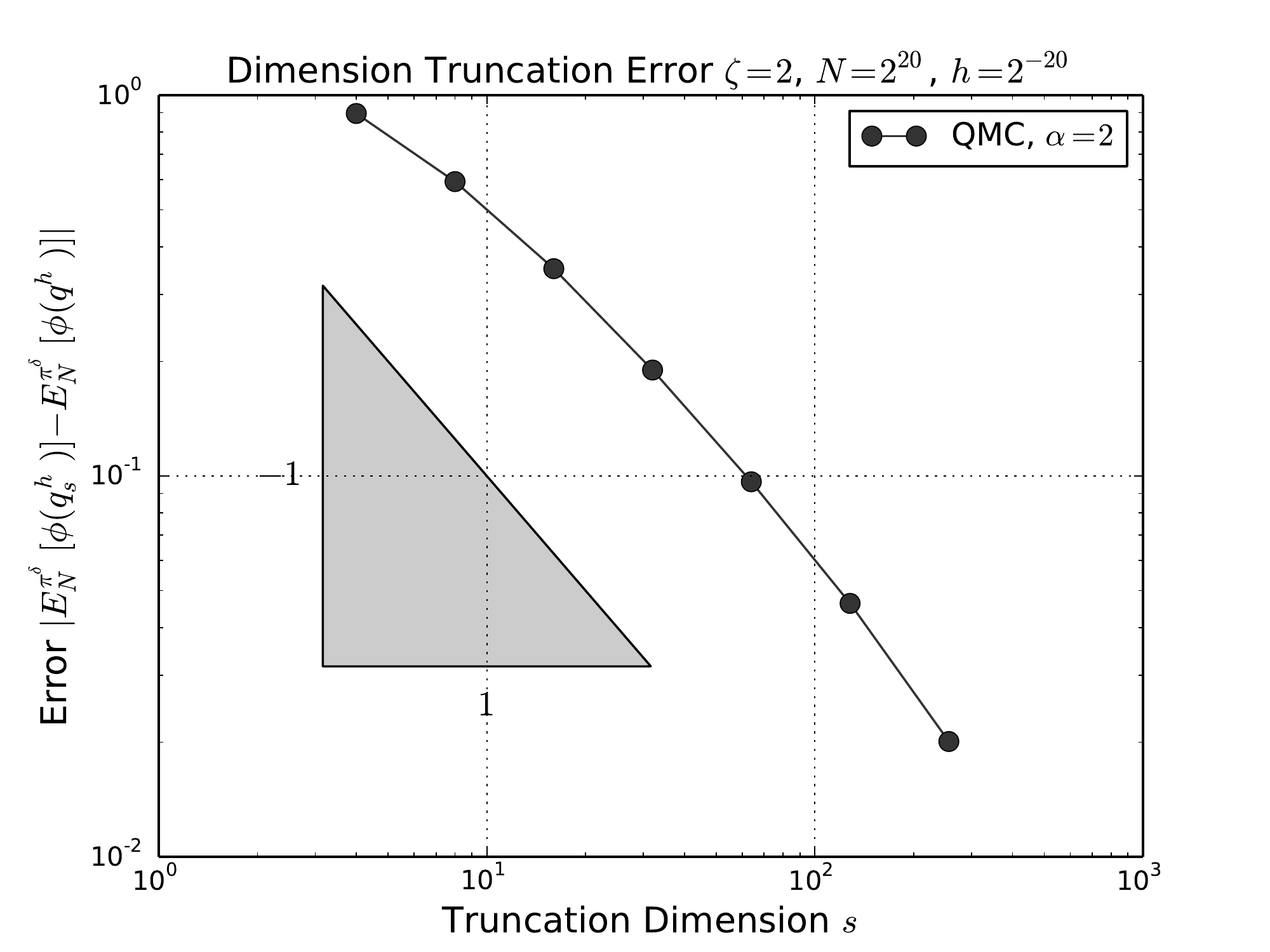}
        \subcaption{Dimension truncation error}
        \label{fig:strunc}
    \end{minipage}
    \caption{FEM and dimension truncation errors.
        Reference solutions were obtained using $h=2^{-20}$ for (a) and $s=1024$ dimensions for (b).
    }
    \label{fig:struncfem}
\end{figure}

\subsubsection{QMC Convergence}
\label{sec:QMCCnv}
Figures~\ref{fig:conv:KLforward} and \ref{fig:conv:KLinverse}
show the convergence of the QMC approximation
to the prior and posterior expectations, respectively.
In both cases, the convergence rate $N^{-2}$ is clearly visible
for the considered interlaced polynomial lattice rule
with interlacing factors $\alpha=2,3$.
This rate of convergence is in particular independent of the
dimension $s$ of the parametric space $U_s$,
for the values considered here.
%

\begin{figure}[H]
    \centering
    \begin{minipage}[b]{0.49\linewidth}
        \centering
        \includegraphics[width=\textwidth]{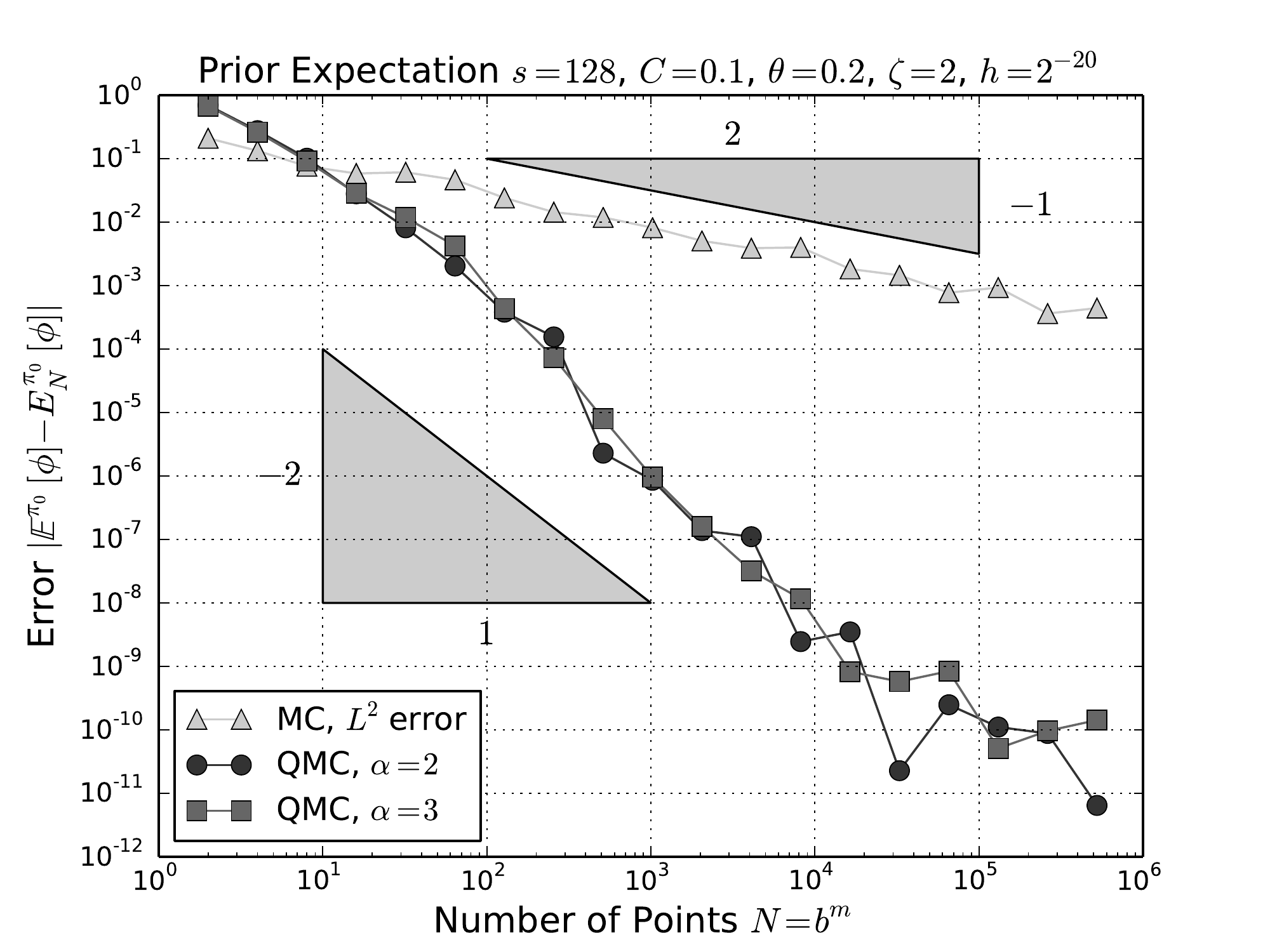}
        \subcaption{$s=128$}
    \end{minipage}
    \begin{minipage}[b]{0.49\linewidth}
        \centering
        \includegraphics[width=\textwidth]{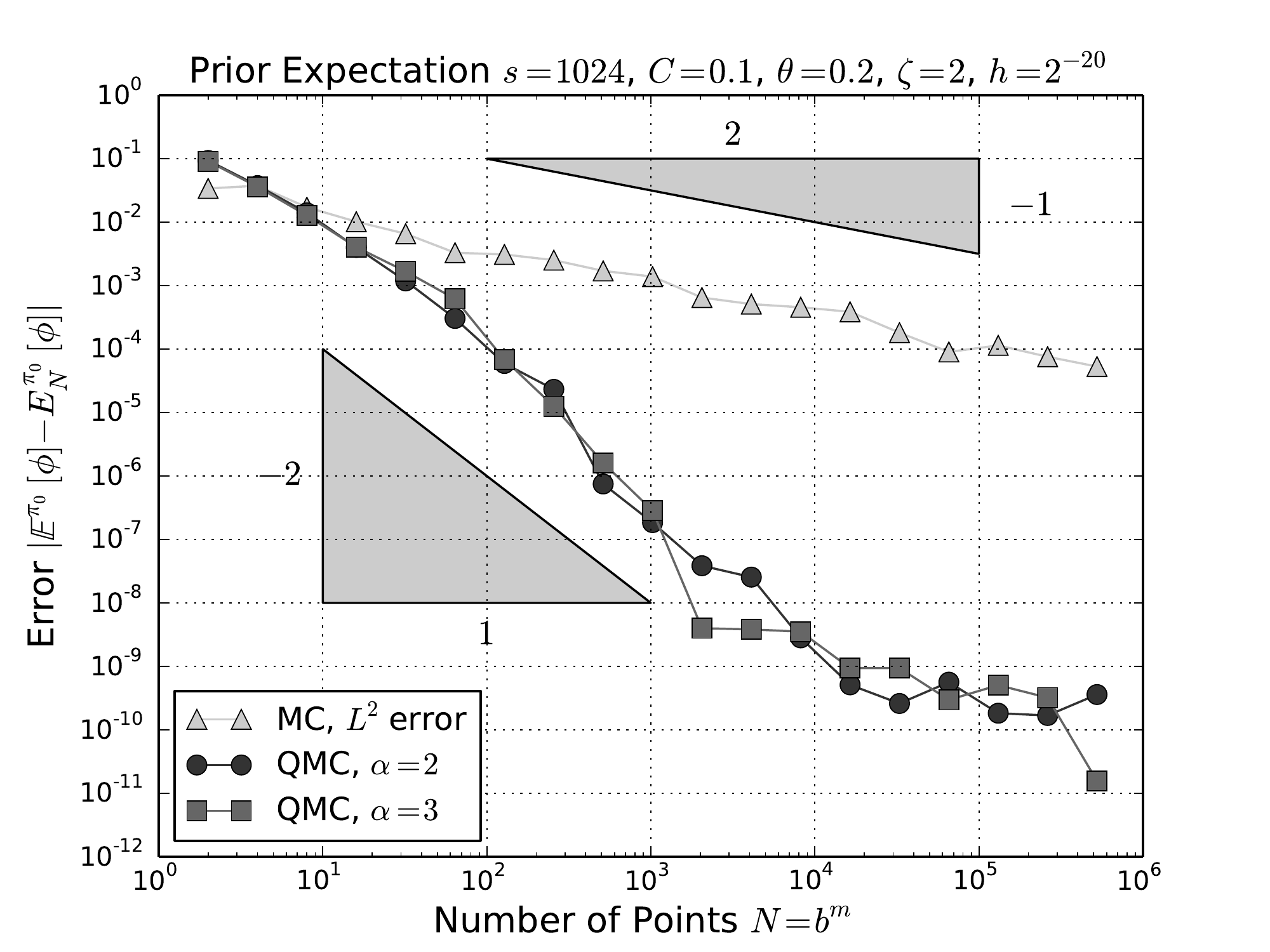}
        \subcaption{$s=1024$}
    \end{minipage}
    \caption{Convergence of the prior approximation vs.~the number of samples $N=b^m$
        for $C=0.1$, $\theta=0.2$, $s=128, 1024$, 
        $\zeta=2$, $\alpha=2,3$, $h=2^{-20}$.
        For MC, the $L^2$ error was approximated using $10$ repetitions.
    }
    \label{fig:conv:KLforward}
\end{figure}
\begin{figure}[H]
    \centering
    \begin{minipage}[b]{0.49\linewidth}
        \centering
        \includegraphics[width=\textwidth]{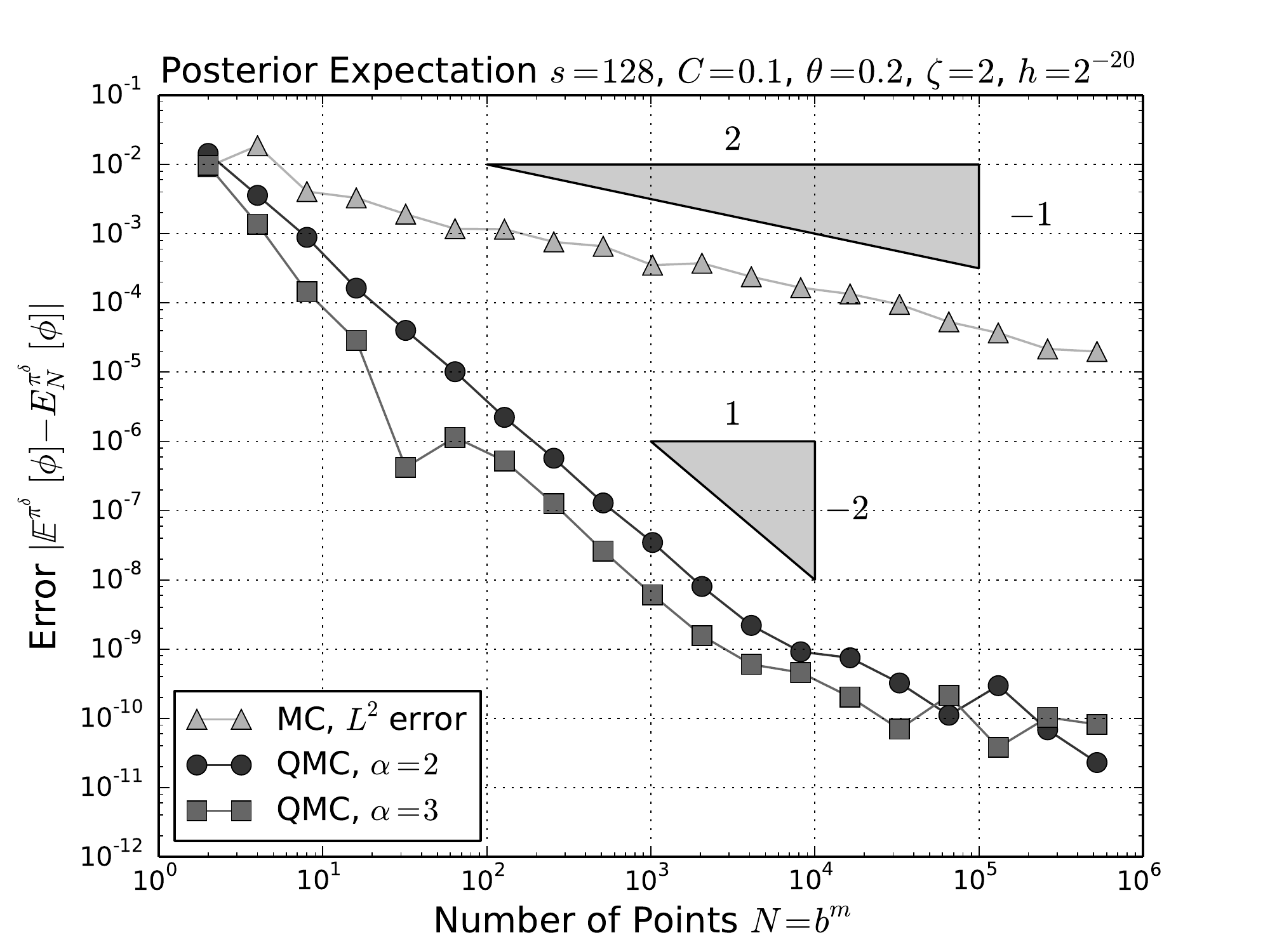}
        \subcaption{$s=128$}
    \end{minipage}
    \begin{minipage}[b]{0.49\linewidth}
        \centering
        \includegraphics[width=\textwidth]{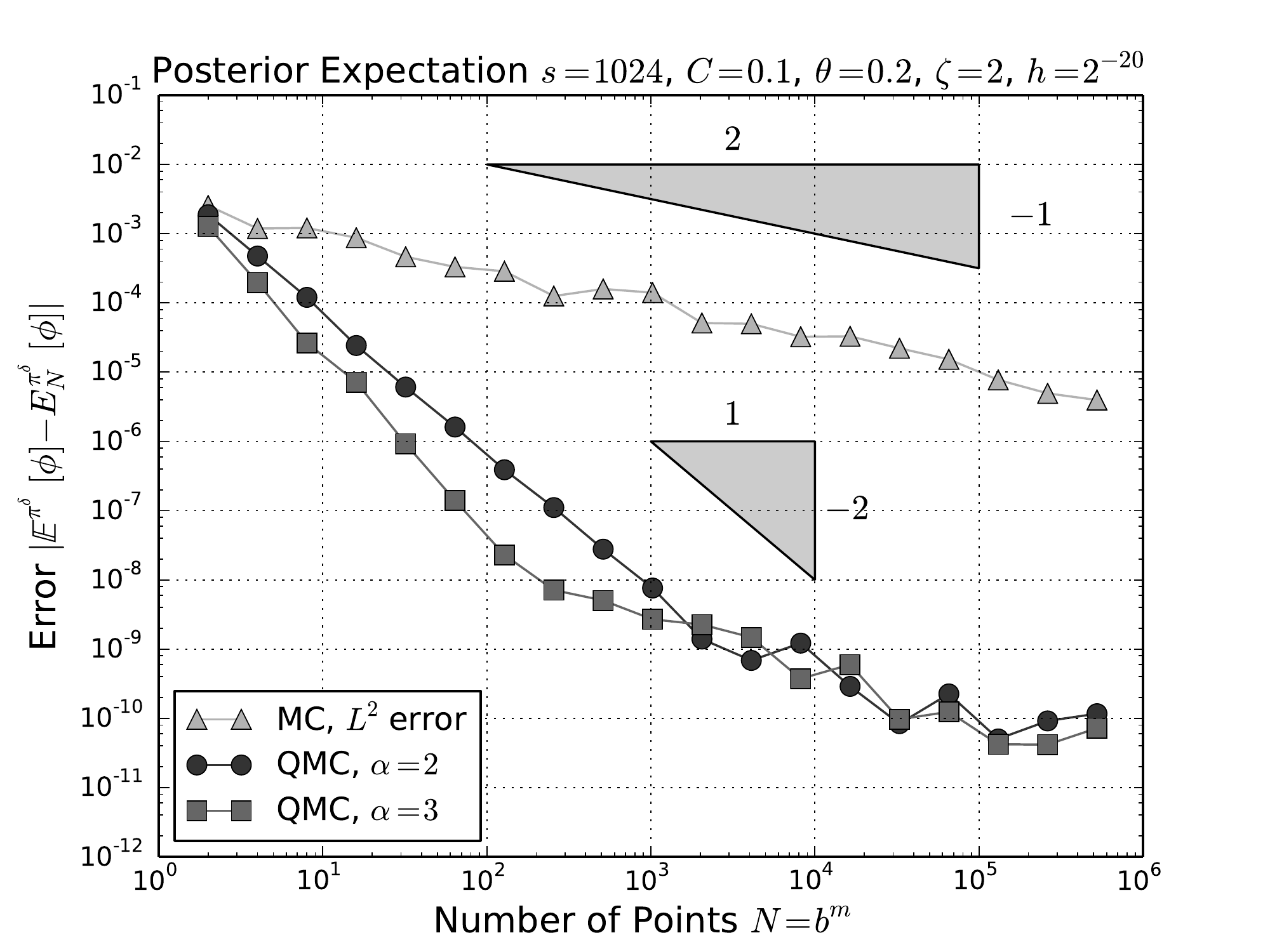}
        \subcaption{$s=1024$}
    \end{minipage}
    \caption{Convergence of the posterior approximation vs.~the number of samples $N=b^m$
        for $C=0.1$, $\theta=0.2$, $s=128, 1024$,
        $\zeta=2$, $\alpha=2,3$, $h=2^{-20}$.
        For MC, the $L^2$ error was approximated using $10$ repetitions.
    }
    \label{fig:conv:KLinverse}
\end{figure}

\subsection{Results for Example 2: Indicator Functions}
\label{sec:res:ind}

We consider the indicator function basis as in \eqref{eq:stepbasis},
with the intervals $D_j=(x_{j-1},x_j)$ chosen based on the points $x_0,\ldots,x_s$
of a graded mesh $\mathcal{T}\subset(0,1)$.
The points in $\mathcal{T}$ are obtained by transforming
an equidistant mesh with the function $g(x)=x^a$ for an $a\in\bbR$.
In the following, we use $a=0.2$, yielding the points $x_j=(j/s)^{1/5}$ for $j=0,\ldots,s$.
This choice implies that the support of the first few parametric basis functions is
relatively large, ensuring that the range of the observations of the solution is
of the same order of magnitude for the values of $s$ used in the experiments.
This, together with the choice of $f$ mentioned below,
justifies the use of $\Gamma=\mathbb{I}_{3\times3}$ in the
measurement model of the Bayesian inverse problem.
Note that for different ``truncation'' dimensions $s$ 
the choice \eqref{eq:stepbasis} with intervals $D_j=(x_{j-1},x_j)$ and  $x_j=(j/s)^{1/5}$
implies that we solve different problems; 
convergence for $s\to\infty$ is moot for this data.

In the following, we consider the right-hand side function $f(x)$ to be constant.
Together with the piecewise constant diffusion coefficient model,
this implies that the solution is a piecewise quadratic function
on the given mesh $\mathcal{T}$.
Thus, if we use quadratic element basis functions in the finite element computations,
we will obtain the exact solution, allowing us to ignore effects of the discretization error.
\subsubsection{Choice of QMC weights}
\label{sec:ChcWgt}
As mentioned in Section~\ref{sec:Indic},
this choice of diffusion coefficient model allows the use of product weights
in the norm in Definition~\ref{def_F_norm} of the form (cp.~\eqref{equ:hybridWeight} with $E^c=\emptyset$)
\begin{equation*}
    \gamma_\setu :=  \sum_{\bsnu_\setu \in \{1:\alpha\}^{|\setu|}}
    \bsnu_{\setu}!\,
    \prod_{j\in\setu} \left(2^{\delta(\nu_j,\alpha)}\beta_j^{\nu_j}\right)
    ,
\end{equation*}
which we construct with the sequence $\beta_j=\theta j^{-\zeta}$ with $\theta=0.25$ and $\zeta=2$.
This is an advantage because the number of operations required for the construction
of the generating vector is linear in the dimension $s$,
whereas for the SPOD weights used in Section~\ref{sec:res:KL} it 
scaled quadratically with respect to $s$ (see Proposition \ref{prop:main1}).
This renders problems with large parametric dimension $s$ computationally
accessible.
\subsubsection{QMC Convergence}
\label{sec:QMCConv}
Figures~\ref{fig:P2forward} and \ref{fig:P2inverse}
show the convergence of the QMC approximation
to the prior and posterior expectations, respectively.
In both cases, we observe the expected rate $N^{-2}$,
which seems to be independent of the parameter space dimension $s$
for the values of $s$ considered here.
\begin{figure}[H]
    \centering
    \begin{minipage}[b]{0.49\linewidth}
        \centering
        \includegraphics[width=\textwidth]{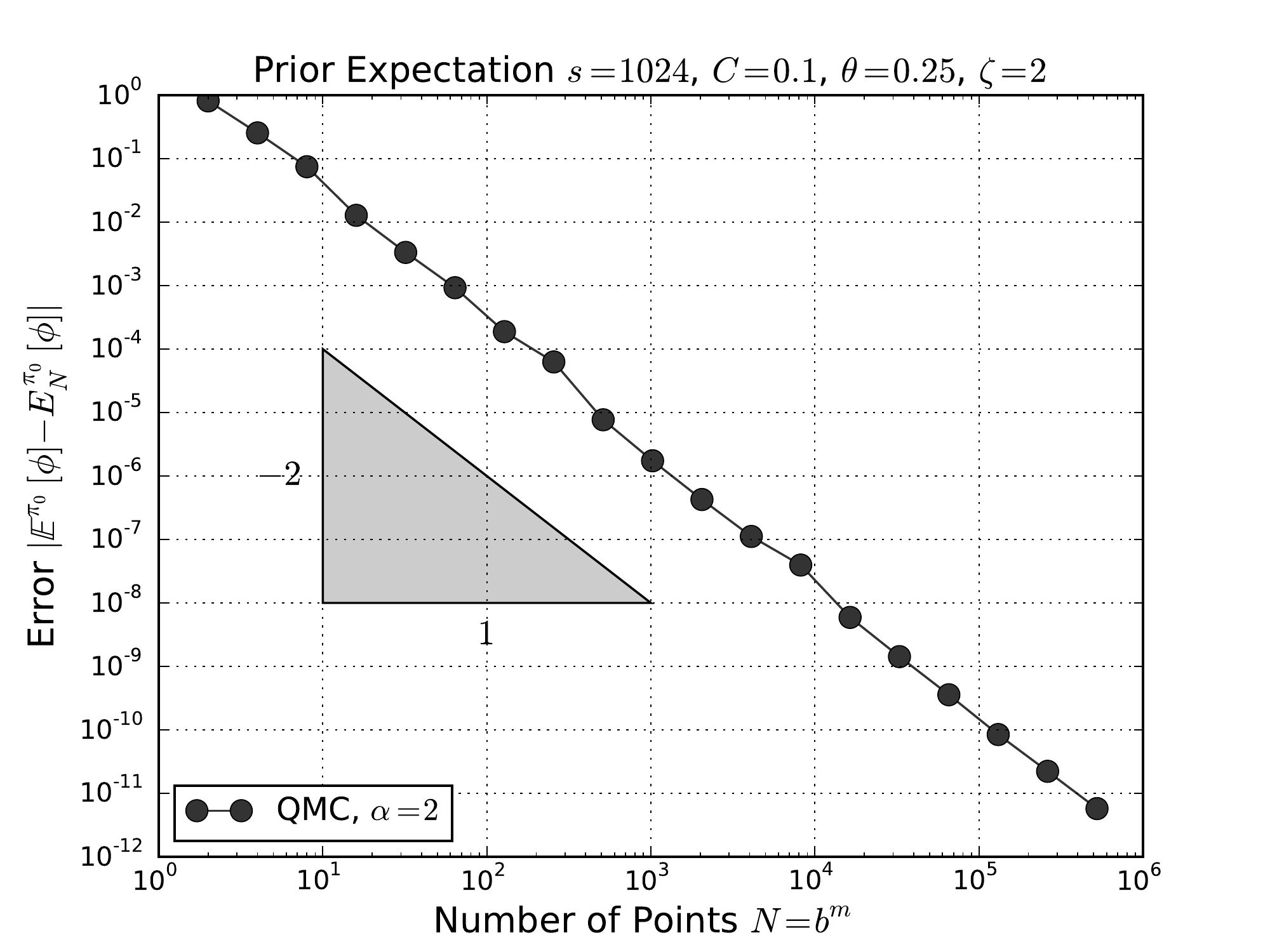} 
        \subcaption{$s=1024$}
    \end{minipage}
    \begin{minipage}[b]{0.49\linewidth}
        \centering
        \includegraphics[width=\textwidth]{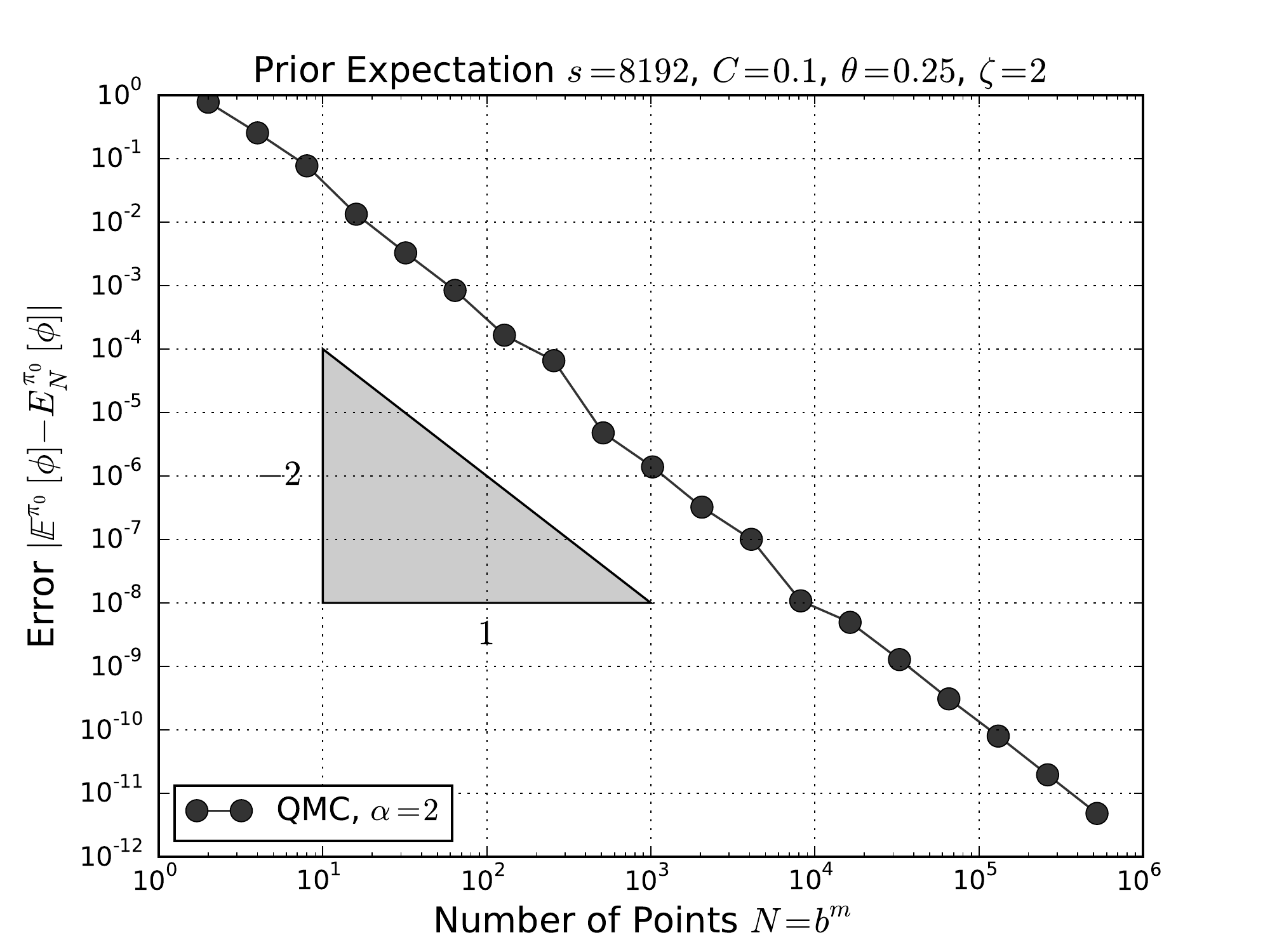} 
        \subcaption{$s=8192$}
    \end{minipage}
    \caption{Convergence of the prior approximation vs.~the number of samples $N=b^m$
        for $C=0.1$, $\theta=0.25$, $s=1024, 8192$,
        $\zeta=2$, $\alpha=2,3$.
    }
    \label{fig:P2forward}
\end{figure}
\begin{figure}[H]
    \centering
    \begin{minipage}[b]{0.49\linewidth}
        \centering
        \includegraphics[width=\textwidth]{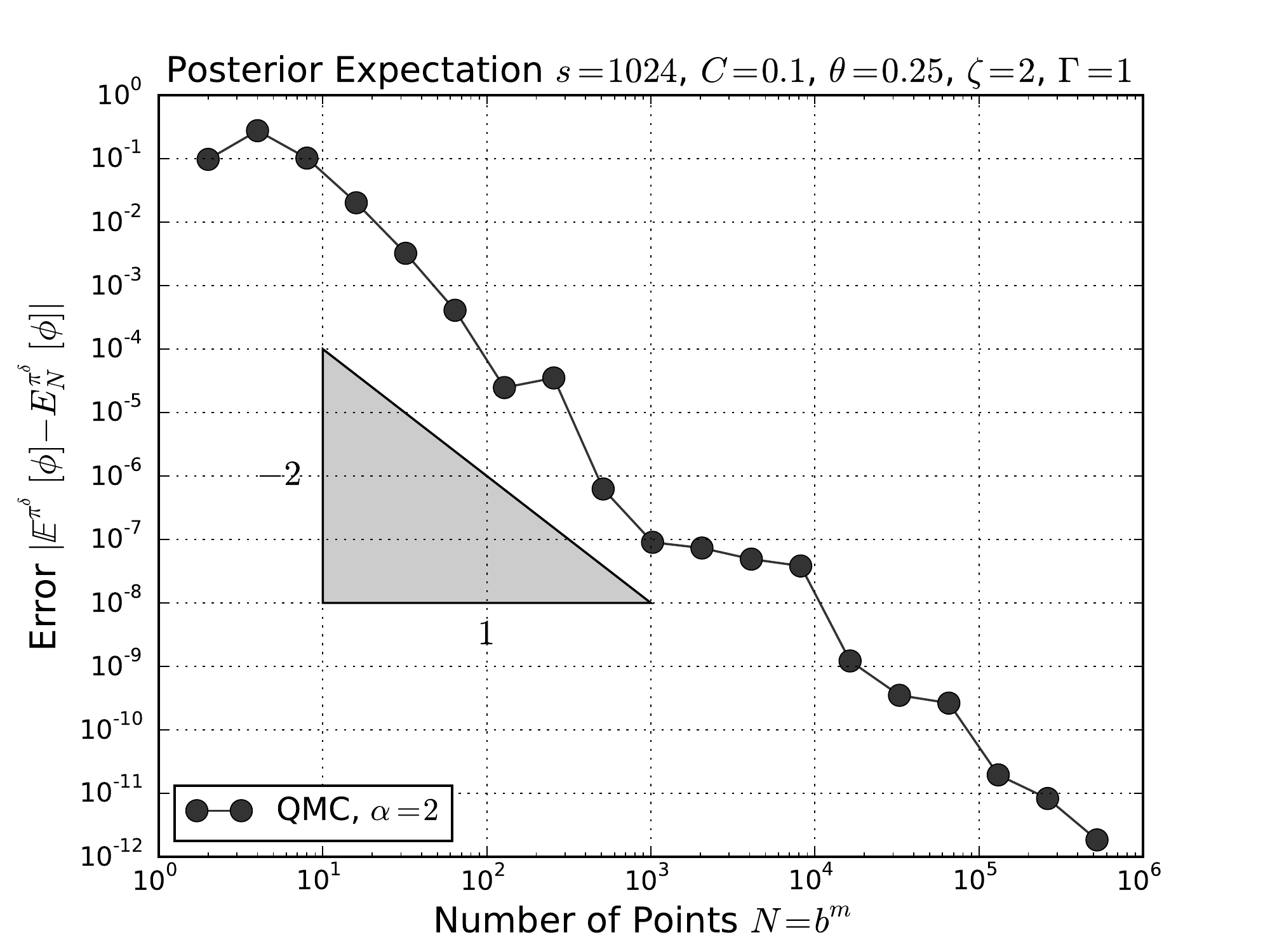} 
        \subcaption{$s=1024$}
    \end{minipage}
    \begin{minipage}[b]{0.49\linewidth}
        \centering
        \includegraphics[width=\textwidth]{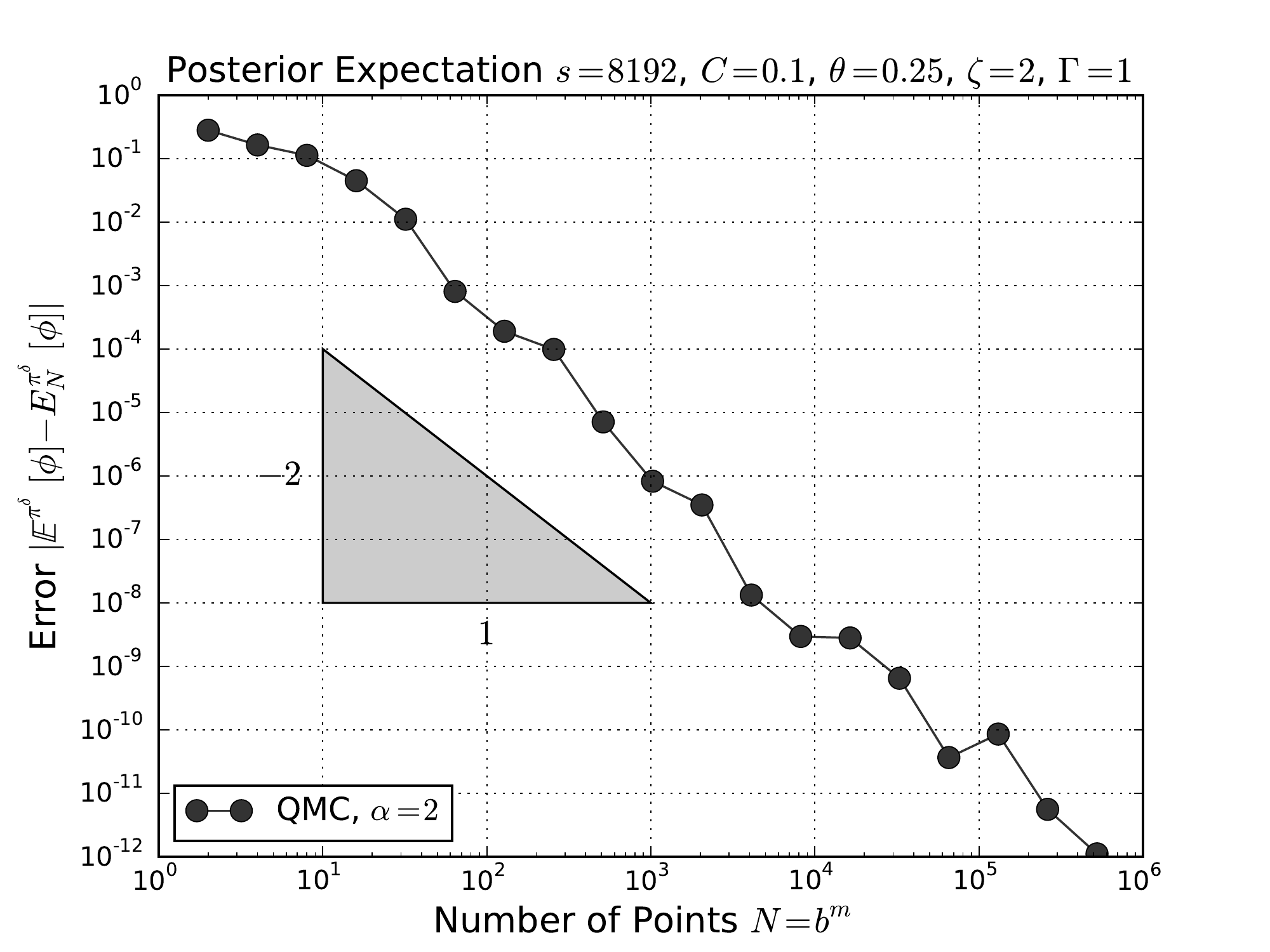} 
        \subcaption{$s=8192$}
    \end{minipage}
    \caption{Convergence of the posterior approximation vs.~the number of samples $N=b^m$
        for $C=0.1$, $\theta=0.25$, $s=1024, 8192$,
        $\zeta=2$, $\alpha=2,3$.
    }
    \label{fig:P2inverse}
\end{figure}

Comparing these convergence results with the corresponding
results for the \KL basis, the `levelling' of the total errors
at around $10^{-10}$ in Fig. \ref{fig:conv:KLforward} is due
to the spacial discretization error, which is absent in 
the presently considered indicator function basis with $P2$-Finite Elements,
suggesting that the additive structure of the combined error bound
\eqref{eq:RatEstErrBd} (which resulted from the triangle inequality)
is sharp in these cases.

\paragraph{Acknowledgments}
This work was supported by CPU time from the 
Swiss National Supercomputing Centre (CSCS) 
under project IDs s522 and d41,
by the Swiss National Science Foundation (SNF) under Grant No. SNF149819,
and by Australian Research Council's Discovery Projects under project
number DP150101770.
This work has benefited from discussions between the authors at
the conference 
``Approximation of High-Dimensional Numerical Problems: 
Algorithms, Analysis and Applications''
at BIRS, Banff, Canada, September 27 - October 2, 2015.
 

\begin{thebibliography}{99.}%
%
\bibitem
  {BDGP11} J. Baldeaux, J. Dick, J. Greslehner and F. Pillichshammer,
  Construction algorithms for higher order polynomial lattice rules.
  J. Complexity, 27, 281--299, 2011.

\bibitem
  {BDLNP12}
  J. Baldeaux, J. Dick, G. Leobacher, D. Nuyens and F. Pillichshammer,
  Efficient calculation of the worst-case error and (fast) 
  component-by-component 
  construction of higher order polynomial lattice rules. 
  Numer. Algorithms, 59, 403--431, 2012.
%
%
%

\bibitem{CCS2} 
  A. Chkifa, A. Cohen and Ch. Schwab,
  Breaking the curse of dimensionality in sparse polynomial approximation of parametric PDEs.
  Journ. Math. Pures et Appliquees, 103, 400--428, 2015.

\bibitem{CiDo72}
 Z. Ciesielski and J.~Domsta, 
 Construction of an orthonormal basis in $C^m(I^d)$ and $W^m_p(I^d)$. 
 Studia Math.,  41, 211--224, 1972.

\bibitem{CDS1}
  A. Cohen, R. DeVore and Ch. Schwab,
  Convergence rates of best $N$-term Galerkin approximation for a class of elliptic sPDEs.
  Found. Comput. Math., 10, 615--646, 2010.

\bibitem{CDS2} A. Cohen, R. DeVore and Ch. Schwab, 
  Analytic regularity and polynomial approximation of parametric and stochastic
  elliptic PDEs. Analysis and Applications, 9, 1--37, 2011.
%

%
\bibitem{Stuart13}
M.~Dashti and A.~M. Stuart, 
The Bayesian Approach to Inverse Problems
(to appear in Handbook of Uncertainty Quantification, Springer Publ. 2016)
Available at arXiv:1302.6989v4
%
\bibitem{Davis}
  P.J. Davis, {\em  Interpolation and Approximation.} Dover Publications, Inc., New York, 1975.

\bibitem
{D07}
J. Dick,
Explicit constructions of Quasi-Monte Carlo rules 
for the numerical integration of high-dimensional periodic functions.
SIAM J. Numer. Anal., 45, 2141--2176, 2007.

\bibitem
{D08}
J. Dick,
Walsh spaces containing smooth functions and Quasi-Monte Carlo rules of arbitrary high order.
SIAM J. Numer. Anal., 46, 1519--1553, 2008.

\bibitem
{D09}
J. Dick,
The decay of the Walsh coefficients of smooth functions.
Bull. Aust. Math. Soc., 80, 430--453, 2009.

\bibitem{DK15}
J. Dick and P. Kritzer, 
On a projection-corrected component-by-component construction.
J. Complexity,  32, 74--80, 2016. 


\bibitem{DKGNS13}
  J.~Dick, F.Y.~Kuo, Q.~T.~Le Gia, D.~Nuyens and Ch.~Schwab,
  Higher order QMC Galerkin discretization for parametric operator equations.
  SIAM J. Numer. Anal. {\bf 52} 2676--2702, 2014.

\bibitem{DKGS14}
  J.~Dick, F.Y.~Kuo, Q.~T.~Le Gia and Ch.~Schwab,
  Multi-level higher order QMC Galerkin discretization for affine parametric operator equations.
  Report 2014-14  Seminar for Applied Mathematics, ETH Z\"urich, 
  Switzerland (in review). Available at arXiv:1406.4432 [math.NA]
 
\bibitem{DLGCS14} J.~Dick, Q.~T.~Le Gia and Ch.~Schwab,
Higher order Quasi Monte Carlo integration for holomorphic parametric operator equations, 
Report 2014-23, Seminar for Applied Mathematics, ETH Z\"urich, Switzerland 
(to appear in SIAM Journ. Unc. Quantification 2016).
Available at arXiv:1409.2180 [math.NA]

\bibitem{DGLGCSBIPSL}  J.~Dick, R.~N.~Gantner, Q.~T.~Le Gia and Ch.~Schwab,
Higher order Quasi-Monte Carlo integration for Bayesian Estimation.
Report 2016-13, Seminar for Applied Mathematics, ETH Z\"urich, 
Switzerland (in review).
%
%
%
\bibitem
  {DiPi10}
  J.~Dick and F.~Pillichshammer, 
  {\em Digital Nets and Sequences. Discrepancy Theory and Quasi-Monte Carlo Integration}, Cambridge University Press, 2010.
%
\bibitem{DodSchlTeck}
T.J.~Dodwell, C.~Ketelsen, R.~Scheichl and A.L.~Teckentrup,
{\em A {H}ierarchical {M}ultilevel {M}arkov {C}hain {M}onte {C}arlo 
            {A}lgorithm with {A}pplications to {U}ncertainty {Q}uantification in {S}ubsurface {F}low},
SIAM/ASA Journal on Uncertainty Quantification,
{\bf 3} (2015) {1075--1108}.
%
\bibitem{GR90}
V. Girault and P.A.~Raviart,
{\em  Finite Element Methods for Navier-Stokes Equations.}
  Springer Verlag, Berlin, 1986.
%


\bibitem{GaCS14} R.~N.~Gantner and Ch.~Schwab,
    Computational Higher Order Quasi-Monte Carlo Integration,
    Tech. Report 2014-25, Seminar for Applied Mathematics, ETH Z{\"u}rich.
    To appear in Monte Carlo and Quasi-Monte Carlo Methods 2014, R. Cools and D. Nuyens (eds.), 2016.

\bibitem{GilesActa} 
    M.~B.~Giles, Multilevel Monte Carlo methods. 
    Acta Numer. {\bf 24} (2015), 259–328.

\bibitem{Go13}
T. Goda,
Good interlaced polynomial lattice rules for numerical integration in 
weighted Walsh spaces.  J. Comput. Appl. Math., 285, 279--294, 2015.
  
\bibitem
{DiGo12}
T. Goda and J. Dick,
Construction of interlaced scrambled polynomial lattice rules 
of arbitrary high order.  Found. Comput. Math., 15, 1245--1278, 2015.

  
\bibitem
  {HaSc11}
  M. Hansen and Ch. Schwab, 
  Analytic regularity and best $N$-term approximation of high dimensional,
  parametric initial value problems. 
  Vietnam Journal of Mathematics, 41, 181--215, 2013.

\bibitem
  {HoSc12Multi}
  V.H.~Hoang and Ch. Schwab,
  Analytic regularity and polynomial approximation of stochastic, 
  parametric elliptic multiscale PDEs.
  Analysis and Applications (Singapore), 11, (01), 2011.

\bibitem{HoSc12Wave}
  V.~H.~Hoang and Ch.~Schwab,
  Regularity and Generalized Polynomial Chaos Approximation of Parametric 
  and Random Second-Order Hyperbolic Partial Differential Equations.
  Analysis and Applications (Singapore), 10, (3), 2012. 

\bibitem{HoangScSt12}
  V.~H.~Hoang and Ch.~Schwab and A.M.~Stuart,
  {\em Complexity analysis of accelerated {MCMC} methods for {B}ayesian inversion},
  Inverse Problems {\bf 29}(8) 2013.

%
%
%
%
%
\bibitem
  {KunothCS2011}
  A. Kunoth and Ch. Schwab,
  Analytic Regularity and GPC Approximation for Stochastic Control Problems 
  Constrained by Linear Parametric Elliptic and Parabolic PDEs.
  SIAM J. Control Optim., 51, 2442 -- 2471, 2013.


\bibitem
  {KSS12}
  F.~Y.~Kuo, Ch.~Schwab and I.~H.~Sloan,
  Quasi-Monte Carlo finite element methods for a class of elliptic partial
  differential equations with random coefficient.
  SIAM J. Numerical Analysis,  50, 3351--3374, 2012. 

\bibitem
  {KSS11}
  F.~Y.~Kuo, Ch.~Schwab and I.~H.~Sloan,
  Quasi-Monte Carlo methods for very high dimensional
  integration: the standard weighted-space setting and beyond.
  ANZIAM Journal, 53, 1--37, 2011.

\bibitem
  {KSS13}
  F.~Y.~Kuo, Ch.~Schwab and I.~H.~Sloan,
  Multi-Level Quasi-Monte Carlo finite element methods for a class of elliptic partial
  differential equations with random coefficient, 
  Found. Comp. Math., 15, 411--449, 2015.
 

%
%
%
%
%
%
%




\bibitem
  {Nie92}
  H.~Niederreiter,
  {\em Random Number Generation and Quasi-Monte Carlo Methods}.
  SIAM, Philadelphia, 1992.

%
\bibitem{NS12}
   V.~Nistor and Ch.~Schwab,
   High order Galerkin approximations for parametric second order
     elliptic partial differential equations. 
    Math. Models Methods Appl. Sci., 23, 1729 -- 1760, 2013. 
%
\bibitem{NC06a}
  D.~Nuyens and R.~Cools,
  Fast algorithms for component-by-component construction of
  rank-$1$ lattice rules in shift-invariant reproducing kernel
  Hilbert spaces. Math.\ Comp., 75, 903 -- 920, 2006.
%
\bibitem{PR} J.~Pousin and J.~Rappaz,
  Consistency, Stability, apriori and aposteriori errors
       for Petrov-Galerkin methods applied to nonlinear
       problems. Numer. Math., 69, 213--231, 1994.
%
\bibitem{SS12}
    Cl.~Schillings and Ch.~Schwab,
    Sparse, adaptive Smolyak quadratures for Bayesian inverse problems. 
    Inverse Problems, 29, 065011, 28 pp, 2013.
%
\bibitem{SS13}
    Cl.~Schillings and Ch.~Schwab,
    Sparsity in Bayesian Inversion of Parametric Operator Equations,
    Inverse Problems, 30, 065007, 30 pp., 2014. 
%
\bibitem{SS14} 
    Cl.~Schillings and Ch.~Schwab,
    Scaling Limits in Computational Bayesian Inversion.
    Report 2014-26, Seminar for Applied Mathematics, ETH Z{\"u}rich,
    (to appear in M2AN, 2016).
%
%

\bibitem
  {ScMCQMC12}
  Ch.~Schwab,
  QMC Galerkin discretizations of parametric operator equations. 
In J. Dick, F. Y. Kuo, G. W. Peters and I. H. Sloan (eds.), 
Monte Carlo and Quasi-Monte Carlo methods 2012, Springer Verlag, 
Berlin, 2013, pp. 613--630.
  

\bibitem
  {SchwabGittelsonActNum11}
  Ch.~Schwab and C.J. Gittelson,
  Sparse tensor discretizations of high-dimensional parametric and stochastic PDEs.
  Acta Numerica, 20, 291--467, 2011.
%
\bibitem
  {ScSt09}
  Ch.~Schwab and R.S. Stevenson,
  Space-Time adaptive wavelet methods for parabolic evolution
       equations. Math. Comp., 78, 1293--1318, 2009.
%
\bibitem{ScSt11} Ch. Schwab and A.M. Stuart,
Sparse deterministic approximation of Bayesian inverse problems.
Inverse Problems, 28, 045003, 2012.
%
\bibitem
  {ST06}
  Ch.~Schwab and R.~A.~Todor,
  \KL approximation of random fields by generalized fast multipole methods.
  J.\ Comput.\ Phy., 217, 100--122, 2006.
%
%




\bibitem
{SW98}
I.~H.~Sloan and H.~Wo\'zniakowski,
When are Quasi-Monte Carlo algorithms efficient for
high-dimensional integrals?
J.~Complexity, 14, 1--33.
%
\bibitem{Stuart10}
A.~M. Stuart, Inverse problems: a {B}ayesian perspective.
Acta Numerica, 19, 451--559, 2010.
%
\bibitem{Yoshiki15}
T.~Yoshiki, Bounds on the Walsh coefficients by dyadic difference and 
a new Koksma-Hlawka type inequality for Quasi-Monte Carlo integration. 
Preprint arXiv:1504.03175.
%
\end{thebibliography}

\end{document}